\documentclass[a4paper, 12pt]{article}
\usepackage[left=3cm, right=2cm, top=2.5cm, bottom=2.5cm]{geometry}

\usepackage{amssymb,amsmath,amsthm,amsfonts,mathrsfs,bbm}
\usepackage[pdftex]{graphicx}
\usepackage{subcaption}
\usepackage{hyperref}
\usepackage{multirow}
\usepackage{cleveref}
\usepackage{listings}
\usepackage[T1]{fontenc}
\usepackage[utf8]{inputenc}
\usepackage[english]{babel}

\usepackage{csvsimple}
\usepackage{tikz}
\usepackage{pgfplots}
\usepackage{booktabs}

\usepackage{algpseudocode}

\usepackage{paralist,enumitem,tabularx}
\usepackage{mathtools}

\usepackage{layout}
\usepackage{fancyhdr}
\usepackage{setspace}

\usepackage{float}

\usepackage{pdfpages}

\usepackage{datetime}

\usepackage[backend=bibtex,maxnames=99]{biblatex} 
\newcommand*\diff{\mathop{}\!\mathrm{d}}

\numberwithin{equation}{section}

\theoremstyle{bolddef}
\newtheorem{definition}{Definition}[section]
\newtheorem{algorithm}[definition]{Algorithm}
\newtheorem{assumption}[definition]{Assumption}

\theoremstyle{definition}

\newtheorem{remark}[definition]{Remark}

\theoremstyle{boldplain}
\newtheorem{lemma}[definition]{Lemma}
\newtheorem{theorem}[definition]{Theorem}
\newtheorem{proposition}[definition]{Proposition}

\newcommand{\xk}{x^k}
\newcommand{\wk}{w^k}
\newcommand{\wkm}{w^{k-1}}
\newcommand{\dk}{d^k}
\newcommand{\tauk}{\tau_k}
\newcommand{\xkm}{x^{k-1}}
\newcommand{\xkp}{x^{k+1}}

\newcommand{\xj}{x^j}
\newcommand{\xjp}{x^{j+1}}
\newcommand{\xjm}{x^{j-1}}

\newcommand{\xhkp}{{\hat x}^{k+1}}
\newcommand{\pmin}{p_{\min}}

\newcommand{\taug}{\underline{\tau}}
\newcommand{\tauhk}{{\hat \tau}_k}
\newcommand{\taumin}{\tau_\mathrm{min}}
\newcommand{\taumax}{\tau_\mathrm{max}}

\newcommand{\R}{\mathbb{R}}
\newcommand{\barR}{\overline{\mathbb{R}}}
\newcommand{\N}{\mathbb{N}}
\newcommand{\X}{\mathcal{E}}
\newcommand{\dom}{\mathrm{dom}}
\newcommand{\Rvalue}{\mathcal{R}}
\newcommand{\dist}{\mathrm{dist}}

\newcommand{\proj}{\mathrm{proj}}
\newcommand{\clball}{\overline{B}}

\newcommand{\tr}{\mathrm{tr}}
\newcommand{\grad}{\mathrm{grad}}
\newcommand{\retr}{\mathrm{R}}

\date{\today}

\title{A Nonmonotone Descent Method for Optimization Problems
Defined by Upper-$\mathcal{C}^2 $ Functions over Submanifolds}

\author{Christian Kanzow\footnote{University of W\"urzburg, Institute
	of Mathematics, Emil-Fischer-Str.\ 30, 97074 W\"urzburg, Germany; 
	e-mail: christian.kanzow@uni-wuerzburg.de}
	\and Leo Lehmann\footnote{University of W\"urzburg, Institute
		of Mathematics, Emil-Fischer-Str.\ 30, 97074 W\"urzburg, Germany; 
		e-mail: leo.lehmann@uni-wuerzburg.de} }

\begin{document}

\maketitle

\begin{abstract}
We consider the optimization problem of minimizing a nonsmooth function characterized by a nonsmooth formulation of the descent lemma over a manifold. In the unconstrained case over a Euclidean space, this class of functions is called upper-$\mathcal{C}^2$. Using the recent notion of projectional subdifferentials, we show that their descent property carries over to submanifolds. We propose a nonmonotone subgradient method to solve these problems and prove stationarity of accumulation points of the generated sequence as well as convergence and rate-of-convergence results under the Kurdyka-{\L}ojasiewicz property. We also perform numerical experiments and show how our approach can be applied to a certain type of difference of convex functions as well as clustering problems on manifolds.
\end{abstract}

\section{Introduction}

In this paper, we consider the optimization problem
\begin{equation}\label{eq:prob}
	\min_{x \in \mathcal{M}} \varphi(x),
\end{equation}
where $\mathcal{M} \subseteq \X$ is an embedded Riemannian submanifold of some Euclidean space $\X$. The objective $\varphi$ satisfies a nonsmooth (local) formulation of the descent lemma, which holds in particular if $\varphi$ is a function from $\X$ to $\R$ and upper-$\mathcal{C}^2$. This class of functions has been recently recognized as being suitable for extensions of linesearch methods to the nonsmooth setting \cite{aragon2025}. In particular, all functions satisfying the usual smooth descent lemma are upper-$\mathcal{C}^2$, but nonsmooth examples also arise directly in applications. In particular, problems of type \eqref{eq:prob} arise frequently in clustering tasks on manifolds with applications in natural sciences, text processing as well as image, video and time series analysis \cite{Bendokat2024,Cetingul2009,Dhillon2001,Golzy2016,Gruber2006,Jung2021,Oh2019,Sengupta2017,Wiechers2021}.

In the Euclidean setting, a descent method to find a (local) minimum of a continuously differentiable function $ \varphi $ is defined by the iteration
\begin{equation}\label{eq:opt-iteration}
	\xkp := \xk + \tau_k \dk
\end{equation}
where $\dk$ is a descent direction and $\tau_k$ is a stepsize. For optimization problems on nonlinear spaces, i.e.\ manifolds, given a so-called retraction $\retr$, the iteration above can be generalized to
\begin{equation*}
	\xkp = \retr_{\xk}(\tauk \dk).
\end{equation*}
If $\varphi$ is smooth and $\dk$ is the negative of the Riemannian gradient, the iteration is the so called Riemannian gradient descent method. In our case, besides the choice of $\dk$ as a negative subgradient, our framework will be general enough to allow for $\dk$ to be a Newton or quasi-Newton direction. 

The choice of the stepsize $\tauk$ plays an important role for the convergence of the algorithm. Using a backtracking linesearch, $\tauk$ has to be chosen such that a suitable line search criterion holds like the Armijo rule \cite{Armijo1966}. The Armijo condition, however, enforces a descent in the objective function value in every iteration. In contrast, nonmonotone methods allow for controlled increases in the objective function value. Allowing nonmonotonicity often leads to fewer backtracking iterations and thus larger stepsizes, which is the reason why nonmonotone methods possibly perform better in practice \cite{aragon2025, BirginMartinezRaydan2000, DeMarchi2023}.

There are essentially two prominent nonmonotone stepsize rules: The max-type rule by Grippo et al. \cite{GrippoLamparielloLucidi1986} and the mean-type rule by Zhang and Hager \cite{ZhangHager2004}. The work \cite{aragon2025}, which motivated our study of upper-$\mathcal{C}^2$ optimization problems, considers the max-type rule. By employing the mean-rule in our subgradient method later, the global convergence results can be derived without much of the technical difficulties that arise due to the max-rule. Further, the different flavor of nonmonotonicity allows for stronger theoretical guarantees to be derived, namely convergence and convergence rates under the Kurdyka-Lojasiewicz property. To the best of our knowledge, these results are new even in the Euclidean case.

Linesearch methods are also popular in the context of manifold optimization. The nonmonotone mean-type rule in the smooth setting is studied in \cite{Oviedo2022, Qian2024}. In order to ensure convergence, these methods assume some Lipschitz-type conditions \cite{Boumal2018}, which in the Euclidean setting are equivalent to Lipschitz continuity of the gradient or the Lipschitz smoothness property. We propose a more general nonsmooth condition. Similar to the Lipschitz-type conditions, using so-called projectional subdifferentials recently considered in \cite{Khanh2026}, we show that the descent property characterizing the class of upper-$\mathcal{C}^2$ functions can be transferred from the ambient Euclidean space $\X$ to submanifolds.

The paper is organized in the following way. In Section~\ref{Sec:Background}, we introduce some concepts from variational analysis and optimization on manifolds. We further introduce the class of upper-$\mathcal{C}^2$ functions and show that these functions restricted to a submanifold satisfy a certain descent property. In Section~\ref{Sec:Algorithm}, we present our nonmonotone descent method to solve \eqref{eq:prob} and discuss its (global) convergence properties. Subsequently, in Section~\ref{Sec:KL}, we obtain further convergence guarantees under the Kurdyka-Lojasiewicz condition. Next, in Section~\ref{Sec:Numerics}, we are concerned with some applications and conduct corresponding numerical experiments. We conclude with some final remarks in Section~\ref{Sec:Final}.

Notation: We will consider optimization problems on some submanifold $\mathcal{M} \subseteq \X$, where $\X$ is a Euclidean space (finite-dimensional Hilbert space). Throughout this manuscript, we identify the dual space $\X^*$ with $ \X $ itself. We write $ \langle x, y \rangle $ for the scalar product of two elements $ x, y \in \X $ and $ \| x \| $ denotes the induced norm of $ x \in \X $. The distance of a point $ x \in \X $ to a nonempty set $ S \subseteq \X $ is denoted by $ \dist(x, S) := \inf_{y \in  S} \| x - y \| $. Further, for $x \in \X$ and a nonempty closed set $S \subseteq \X$, the projection of $x$ onto $S$ is denoted by $\proj_S(x) := \{ y \in \X \mid \dist(x, S) = \|x-y\| \}$. If $S$ is nonempty, closed and convex, $\proj_S(x)$ is single-valued for all $x \in X$. Also, we denote by $\delta_S$ the indicator function of $S$, which is $0$ for arguments in $S$ and $\infty$ else.

We write $B_r(x)$ for the open and $\clball_r(x)$ for the closed ball of radius $r>0$ around $x \in \X$. For arbitrary $S \subseteq \X$, we denote its interior by $\mathrm{int}(S)$, the closure by $\mathrm{cl}(S)$ and the convex hull by $\mathrm{conv}(S)$.

Finally, $ \R $ denotes the set of real numbers, while $ \barR := (- \infty, 
+ \infty ] $ is the set of extended reals except that we exclude the
value $ - \infty $. Given an extended-valued function $ \theta: 
\X \to \barR $, we call $ \dom (\theta) := 
\{ x \in \X \mid \theta (x)  < \infty \} $
the domain of $ \theta $. The function $ \theta $ is said to
be proper if $ \dom (\theta) $ is nonempty.

\section{Background Material}\label{Sec:Background}

\subsection{Basics from Variational Analysis}

We first recall that a sequence $ \{ \xk \} \subseteq \X $
converges (locally) \emph{Q-linearly} to some limit 
$ x^* \in \X $ if there exists a constant $ c \in (0,1) $ 
such that  $\| \xkp - x^* \| \leq c \| \xk - x^* \|$
holds for all $ k \in \N $ sufficiently large. Furthermore,
we say that $ \{ \xk \} $ converges \emph{R-linearly} to $ x^* $ if
$\limsup_{k \to \infty} \| \xk - x^* \|^{1/k} < 1 $
holds. Note that this property holds if there exist constants
$ \omega > 0 $ and $ \mu \in (0,1) $ such that $ \| \xk - x^* \| \leq 
\omega \mu^k $ for all $ k \in \N $ sufficiently large, i.e.,
if the sequence $ \| \xk - x^* \| $ is dominated by a Q-linearly
convergent null sequence.

We next recall some results from variational analysis and refer the
interested reader to the two monographs \cite{Mordukhovich2018,RockafellarWets2009}
for more details.

Given a proper, lower semicontinuous function $ g: \X \to
\barR $ and any $ x \in \dom (g) $, we call 
\begin{equation*}
	\hat\partial g(x) := \Big\{ v \in \X \, \Big|
	\liminf_{y \to x, y \neq x} \frac{g(y) - g(x) - 
	\langle v, y-x \rangle}{\| y- x \|} \geq 0 \Big\}
\end{equation*}
the \emph{regular} or \emph{Fr\'echet subdifferential} of $ f $ at $ x $, 
whereas
\begin{equation*}
	\partial g(x) := \big\{ v \in \X \mid 
	\exists \xk,  v^k \in \X : \xk \to x, g(\xk) \to g(x),
	v^k \in \hat\partial g(\xk) \ \forall k \big\}
\end{equation*}
is called the \emph{limiting, Mordukhovich}, or 
\emph{basic subdifferential}
of $ g $ at $ x $. 

For a locally Lipschitz continuous function $g$ around $\bar x \in \mathbb{R}$, we define the \emph{Clarke subdifferential} of $g$ at $\bar x $ as 
\begin{equation*}
	\partial_C g(\bar x):= \{ v \in \X \mid \langle v, h \rangle \leq g^\circ (\bar x; h) \text{ for all } h \in \mathbb{R}^n\},
\end{equation*}
where
\begin{equation*}
	g^\circ(\bar x; h) := \limsup_{x \to \bar x, t \to 0^+} \frac{g(x+th) - g(x)}{t}
\end{equation*}
is the \emph{Clarke directional derivative} of $g$ at $\bar x$ in the direction $h$. Note that $\partial_C g(\bar x) = \mathrm{conv}(\partial g(\bar x))$ holds for all locally Lipschitz continuous functions. Further, if $g$ is locally Lipschitz continuous, then the Clarke subdifferentials of $g$ on bounded subsets of $\X$ remain bounded, too, see e.g.\ \cite{clarke1998}. By consequence, as $\partial g(\bar x) \subseteq \partial_C g(\bar x)$ holds, the same is true for the limiting subdifferential.

Let us also recall some notation from variational geometry from \cite{RockafellarWets2009}. For a set $C \subset \X$, we define the \emph{tangent cone} of $C$ at $\bar x \in C$ as 
\begin{equation*}
	T_C(\bar x) := \Big\{ d \in \X\, \Big|\, \exists \{ \xk \} \subset C, t_k \searrow 0, \xk \to \bar x, \frac{\xk - \bar x}{t_k} \to d\Big\}.
\end{equation*}
Further, the \emph{regular normal cone} is given by 
\begin{equation*}
	\hat N_C(\bar x) = T_C(\bar x)^\circ := \{ n \in X \mid \langle n, d \rangle \leq 0 \forall d \in T_C(\bar x)\},
\end{equation*}
and the \emph{limiting normal cone} is defined as 
\begin{equation*}
	N_C(\bar x) = \{ n \in \X \mid \exists \{\xk\} \subset C, \{ n^k\}: \xk \to \bar x, n^k \in \hat N_C(\bar x), n^k \to n\}.
\end{equation*}

Before we proceed by stating some basic concepts and results about manifolds, we finally introduce the so-called Kurdyka–Łojasiewicz property. The following definition is a generalization of the classical 
one for nonsmooth functions, as introduced in \cite{AttouchBolteRedontSoubeyran2010,BolteDaniilidisLewis2007,BolteDaniilidisLewisShiota2007}.
The KL property plays a central role in the local
convergence analysis and rate-of-convergence results of several algorithms for the solution of 
nonsmooth minimization problems, see e.g.\ \cite{attouch2009convergence,Qian2024}.

\begin{definition}\label{def:klproperty}
Let $g: \X \to \overline{\mathbb{R}}$ be lower semicontinuous. We say that $g$ satisfies the {\normalfont Kurdyka–Łojasiewicz (KL) property} at $x^* \in \text{dom}(\partial g) :=
\{x \in \X \, | \, \partial g(x) \neq \emptyset\}$ if there exists a constant $\eta > 0$, a neighborhood $U \subset \X$ of $x^*$, and a continuous and concave function $\chi : [0, \eta] \to [0, \infty)$, called {\normalfont desingularization function}, which is continuously differentiable on $(0, \eta)$ and satisfies $\chi(0)=0$ and $\chi'(t) > 0$ for all $t \in(0, \eta)$, such that the so-called KL inequality 
\begin{equation}\label{eq:klineq}
	\chi' \big( g(x) - g(x^*) \big) \dist \big( 0, \partial g(x)
	\big) \geq 1
\end{equation}
holds for all $x \in U \cap \{x \in \X \, | 
\, g(x^*) < g(x) < g(x^*) + \eta\}$. Furthermore, we call $ g $ a
\emph{KL function} if $ g $ satisfies the KL property at any point
$ x^*  \in \text{dom}(\partial g) $.
\end{definition}

\subsection{Essentials about Optimization on Submanifolds}

As noted in \cite{Boumal2018}, optimization on manifolds is not fundamentally different from optimization in a Euclidean space and many popular methods for unconstrained optimization have been generalized to optimization on manifolds. However, to do so, we require some preliminaries. For a broader overview, we refer to \cite{Absil2008}, \cite{Hu2020} and \cite{Boumal2023}.

Let us recall the following definition of a smooth (embedded) submanifold (see e.g.\ \cite[Example 6.8]{RockafellarWets2009}, \cite[Definition 3.10]{Boumal2023}):
\begin{definition}\label{def:manifold}
	A set $\mathcal{M} \subseteq \mathcal{E}$ is called a \emph{smooth embedded manifold} of dimension $d$ around $\bar x \in \mathcal{M}$ if there exists an open neighborhood $U \subseteq \mathcal{E}$ of $\bar x$ such that $\mathcal{M}$ can be represented as the set of solutions to $F(x) = 0$, where $F: U \to \R^m$ is $\mathcal{C}^1$ with $\nabla F(\bar x)$ of rank $m$, where $m = n-d$. The function $F$ is called a \emph{local defining function} for $\mathcal{M}$ at $\bar x$.
\end{definition}
Note that smooth embedded manifolds are manifolds by themselves, see \cite{Boumal2023}. As we will only be dealing with such manifolds, if we say that a set $\mathcal{M}$ is a manifold, we mean that it satisfies the definition above. Most of the main ideas presented here hold in a more general context and we refer the reader to the references mentioned above for more details. As we will see later, the reason is that the concept of projectional subdifferentials require the more special situation as introduced in Definition~\ref{def:manifold}.

Let $\mathcal{M}$ be a smooth embedded submanifold and $x \in \mathcal{M}$. We follow \cite{Boumal2023} with the following definition and call 
\begin{equation*}
	T_x \mathcal{M} = \{\dot \gamma(0) \mid \gamma : I \to \mathcal{M} \text{ is smooth and } \gamma(0) = x\},
\end{equation*}
the \emph{tangent space} of $\mathcal{M}$ at $x$, where $I$ is an open interval with $0 \in I$. Elements $\xi_x \in T_x\mathcal{M}$ are called \emph{tangent vectors} to $\mathcal{M}$ at $x$. Note that if $F$ is a local defining function of $\mathcal{M}$ at $x$, then by \cite[Theorem 3.15]{Boumal2023}, it holds that $T_x\mathcal{M} = \ker DF(x)$. On the other hand, by \cite[Example 6.8]{RockafellarWets2009}, it also holds that $T_\mathcal{M}(x) = \ker DF(x)$. Hence, the tangent space and the tangent cone agree and the notations $T_x \mathcal{M}$ and $T_\mathcal{M}(x)$ can be used interchangeably. Consequently, the normal cone to a submanifold as the orthogonal complement of the tangent space is a linear space itself, also called \emph{normal space}, that is, we have
\begin{equation*}
	N_\mathcal{M}(x) = (T_x \mathcal{M})^\perp.
\end{equation*}
Note that $T_x \mathcal{M}$ is a vector space with the same dimension $d$ as the manifold $\mathcal{M}$ and with its zero element denoted as $0_x$. 

The collection $T\mathcal{M}:= \cup_{x \in \mathcal{M}} \{x\} \times T_x\mathcal{M}$ denotes the \emph{tangent bundle} of the manifold $\mathcal{M}$.

If the tangent space $T_x \mathcal{M}$ is equipped with a smoothly varying inner product $\langle \cdot, \cdot \rangle_x$ called the \emph{Riemannian metric}, the manifold $\mathcal{M}$ called a \emph{Riemannian manifold}. Let us note that if $\mathcal{M} \subseteq \mathcal{E}$ is a submanifold embedded in some Euclidean space $\mathcal{E}$, then $T_x \mathcal{M}$ can be identified with a linear subspace of $\mathcal{E}$ and the inner product on $\mathcal{E}$ carries over to the tangent spaces in a natural way by means of restriction to the respective subspace. In that case, which we will assume in the following, $\mathcal{M}$ becomes a \emph{Riemannian submanifold} of the Euclidean space $\X$.

For a smooth mapping $G: \mathcal{M} \to \mathcal{N}$ between two embedded submanifolds and $x \in \mathcal{M}$, we call the linear map $DG(x): T_x\mathcal{M} \to T_{G(x)} \mathcal{N}$ defined by
\begin{equation*}
	DG(x)[\xi_x] = \frac{\mathrm{d}}{\mathrm{d} t} G(\gamma(t))\large|_{t = 0} = (G \circ \gamma)'(0),
\end{equation*}
where $\gamma$ is a smooth curve on $\mathcal{M}$ through $x$ at $t=0$ with velocity $\dot \gamma(0) = \xi_x$, the \emph{differential} of $G$ at $x$.

On a manifold $\mathcal{M}$, the \emph{Riemannian gradient} of a smooth map $g: \mathcal{M} \to \R$ is the tangent vector $\grad g(x) \in T_x \mathcal{M}$ such that for all $\xi_x \in T_x \mathcal{M}$ it holds that
\begin{equation*}
	Dg(x)[\xi_x] = \langle \grad g(x), \xi_x\rangle_x.
\end{equation*}
Analogous to the Euclidean setting, the negative of the Riemannian gradient is the direction of steepest descent.

In manifold optimization, a key concept are so-called \emph{retractions}. As noted in \cite{Absil2008}, these have two important purposes: They turn elements of $T_x \mathcal{M}$ into points of $\mathcal{M}$ and secondly, by means of the pullback through the retraction $\retr$, functions on $\mathcal{M}$ can be transformed to functions defined on the vector space $T_x \mathcal{M}$.
\begin{definition}
	Let $\mathcal{M}$ be a manifold. A smooth (i.e.\ at least $\mathcal{C}^2$) mapping $\retr: T \mathcal{M} \to \mathcal{M}$ is called a \emph{retraction} if for all $x \in \mathcal{M}$ the restriction $\retr_x := \retr(x, \cdot) : T_x\mathcal{M} \to \mathcal{M}$ satisfies the following properties:
	\begin{enumerate}
		\item $\retr_x(0_x) = x$, where $0_x$ is the zero element of $T_x\mathcal{M}$.
		\item With the identification $T_0 T_x \mathcal{M} \simeq T_x\mathcal{M}$, it holds that 
			\begin{equation*}
				D\retr_x(0) = \mathrm{id}_{T_x \mathcal{M}},
			\end{equation*}
		where $\mathrm{id}_{T_x \mathcal{M}}$ is the identity mapping on $T_x \mathcal{M}$.
	\end{enumerate}
\end{definition}
One possibility for $\retr$ is the so-called \emph{Riemannian exponential map}, which is, however, not always easy to compute. Finding a \emph{good} retraction is crucial for the design of numerical optimization methods on manifolds. For example, in the Euclidean setting, the canonical retraction is given by $\xi \mapsto x+\xi$ (note that this is well-defined as $T_x \mathcal{E} \simeq \mathcal{E}$) and for the sphere $\mathbb{S}^{n-1}$, we obtain a retraction by means the orthogonal projection mapping
\begin{equation*}
	\retr_x (\xi) := \frac{x+\xi}{\|x+\xi\|}.
\end{equation*}

\begin{remark}\label{remark:boundedefradr}
For simplicity, our subsequent theory will assume that we have a globally defined retraction on the manifold appearing in the constraints of our optimization problem, i.e.\ the retraction $\retr$ is indeed a smooth mapping on $T \mathcal{M}$. However, if the stepsizes of our method are adjusted accordingly, our results still hold true under the weaker condition that the radius of definition of $\retr_x$ remains uniformly bounded away from zero on compact subsets of $\mathcal{M}$.
\end{remark}

Recently, there has been a growing interest in extending the classical concepts from nonsmooth and variational analysis to the context of manifolds. Notably, \cite{Khanh2026} consider so-called projectional subdifferentials on submanifolds. In particular, \cite[Definition 3.1]{Khanh2026} and \cite[Theorem 3.1]{Khanh2026} justify the following definition:
\begin{definition}
	Let $\mathcal{M} \subseteq \X$ be a smooth (embedded) manifold with $\bar x \in \mathcal{M}$ and $g : \X \to \barR$. Then we call
	\begin{equation*}
		\hat \partial_\mathcal{M} g(\bar x) := \proj_{T_{\bar x} \mathcal{M}} \hat \partial (g + \delta_\mathcal{M}) (\bar x)
	\end{equation*}
	the \emph{Fréchet projectional subdifferential} of $g$ at $\bar x$ relative to $\mathcal{M}$, and 
	\begin{equation*}
		\partial_\mathcal{M} g(\bar x) := \proj_{T_{\bar x}\mathcal{M}} \partial (g + \delta_\mathcal{M})(\bar x)
	\end{equation*}
	the \emph{projectional (limiting/basic/Mordukhovich) subdifferential} of $g$ at $\bar x$ relative to $\mathcal{M}$.
\end{definition}
If $g$ is differentiable and $\mathcal{M} \subseteq \mathcal{E}$ an embedded manifold, it holds by \cite[Equation 3.37]{Absil2008} that $\grad g(x) = \proj_{T_x\mathcal{M}} \nabla g(x)$. Hence by \cite[Proposition 3.2]{Khanh2026}, we have $\partial_\mathcal{M} g(x) = \hat \partial_\mathcal{M} g(x) = \{\grad g(x)\}$.

Let us also state the following characterizations of the projectional subdifferentials from \cite{Khanh2026}:
\begin{equation}
	\hat \partial_\mathcal{M} g(\bar x) = \hat \partial (g  + \delta_\mathcal{M})(\bar x) \cap T_{\bar x}\mathcal{M} \text{\quad and \quad} \partial_\mathcal{M} g(\bar x) = \partial (g  + \delta_\mathcal{M})(\bar x) \cap T_{\bar x}\mathcal{M}.\label{eq:projsubdiffchar}
\end{equation}
Any point $x^* \in \dom g \cap \mathcal{M}$ satisfying $0 \in \partial_\mathcal{M} g (x^*)$ is called a \emph{stationary point} of $g$ with respect to $\mathcal{M}$. This can be motivated by the fact that given a local minimizer $x^*$ of $g + \delta_\mathcal{M}$, it holds that $0 \in \hat \partial(g + \delta_\mathcal{M})(x^*)$ and hence $0 \in \hat \partial (g+\delta_\mathcal{M})(x^*) \cap T_{\bar x}\mathcal{M} = \hat \partial_\mathcal{M} g(x^*) \subseteq \partial_\mathcal{M} g(x^*)$.

Let us also note that the projectional limiting subdifferential is \emph{robust} in the following sense (see \cite[Corollary 3.1]{Khanh2026}): Assume that $\{\xk\}$ is a sequence converging to some limit $\bar x$ such that $(g+\delta_\mathcal{M})(\xk) \to (g+\delta_\mathcal{M})(\bar x)$ and $\wk \in \partial_\mathcal{M} g(\xk)$ for all $k \in \N$ converges to some $\bar w$. Then it holds that $\bar w \in \partial_\mathcal{M} g(\bar x)$. This property will play an important role later on in our convergence theory. 

The projectional subdifferential also has the following properties. 
\begin{lemma}\label{lem:projsubdiff}
	Assume that $g$ is locally Lipschitz continuous on an open superset of an embedded manifold $\mathcal{M} \subseteq \X$. Then the following holds for all $\bar x \in \mathcal{M}$.
	\begin{enumerate}
		\item For all $w_\mathcal{M} \in \partial_\mathcal{M} g(\bar x)$, there exists an element $w \in \partial g(\bar x)$ such that $w_\mathcal{M} = \proj_{T_{\bar x} \mathcal{M}} (w)$. In particular, $\partial_\mathcal{M} g$ is locally bounded.
		\item The projectional subdifferential is non-empty, i.e.\ we have $\partial_\mathcal{M} g(\bar x) \neq \emptyset$.
	\end{enumerate}
\end{lemma}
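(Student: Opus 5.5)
The key tool is the limiting subdifferential sum rule for a locally Lipschitz function plus an indicator function. Since $g$ is locally Lipschitz around $\bar x$ and $\delta_\mathcal{M}$ is proper and lower semicontinuous ($\mathcal{M}$ is locally closed), \cite[Exercise 10.10]{RockafellarWets2009} gives
\begin{equation*}
	\partial (g + \delta_\mathcal{M})(\bar x) \subseteq \partial g(\bar x) + \partial \delta_\mathcal{M}(\bar x) = \partial g(\bar x) + N_\mathcal{M}(\bar x).
\end{equation*}
Here $N_\mathcal{M}(\bar x) = (T_{\bar x}\mathcal{M})^\perp$ is the normal space. Only this inclusion, and no equality, is needed.

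For part 1, take $w_\mathcal{M} \in \partial_\mathcal{M} g(\bar x)$. By the characterization \eqref{eq:projsubdiffchar}, $w_\mathcal{M} \in \partial(g+\delta_\mathcal{M})(\bar x) \cap T_{\bar x}\mathcal{M}$. The sum rule then gives $w_\mathcal{M} = w + n$ with $w \in \partial g(\bar x)$ and $n \in (T_{\bar x}\mathcal{M})^\perp$. Projecting onto the linear subspace $T_{\bar x}\mathcal{M}$ kills $n$ and fixes $w_\mathcal{M}$, so $w_\mathcal{M} = \proj_{T_{\bar x}\mathcal{M}}(w)$. Local boundedness follows because the projection onto a subspace is nonexpansive, so $\|w_\mathcal{M}\| \le \|w\|$. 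The limiting subdifferential of a locally Lipschitz function is bounded on bounded sets, as recalled earlier in the paper. Hence $\partial_\mathcal{M} g$ is bounded on bounded subsets of $\mathcal{M}$.

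For part 2, I would show that $\hat\partial(g+\delta_\mathcal{M})$, and hence $\partial(g+\delta_\mathcal{M})$, is nonempty at $\bar x$, and then project. The cleanest route is a reduction to a chart. Near $\bar x$, $\mathcal{M}$ is the image of a $\mathcal{C}^1$ local parametrization $\psi: V \subseteq \R^d \to \mathcal{M}$ with $\psi(0)=\bar x$ and $D\psi(0)$ injective onto $T_{\bar x}\mathcal{M}$. This follows from the implicit function theorem applied to the defining function $F$.

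The composition $h := g\circ\psi$ is locally Lipschitz on $\R^d$. Its limiting subdifferential at $0$ is therefore nonempty, since its Clarke subdifferential is nonempty and equals $\mathrm{conv}\,\partial h(0)$. So pick $u \in \partial h(0)$. It arises as a limit of Fréchet subgradients $u^k \in \hat\partial h(z^k)$ with $z^k \to 0$.

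Each Fréchet subgradient transfers back along the chart. Set $x^k = \psi(z^k)$. For $y \in \mathcal{M}$ near $x^k$ we can write $y = \psi(z)$ with $\|z - z^k\| = O(\|y - x^k\|)$, using local bi-Lipschitzness of $\psi$. Let $v^k \in T_{x^k}\mathcal{M}$ be the unique vector with $D\psi(z^k)^* v^k = u^k$. Then $\langle v^k, y-x^k\rangle = \langle u^k, z-z^k\rangle + o(\|y-x^k\|)$, which gives $v^k \in \hat\partial(g+\delta_\mathcal{M})(x^k)$.

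The $v^k$ are bounded and converge to some $v \in T_{\bar x}\mathcal{M}$. Moreover $g(x^k)\to g(\bar x)$ by continuity. Hence $v \in \partial(g+\delta_\mathcal{M})(\bar x) \cap T_{\bar x}\mathcal{M} = \partial_\mathcal{M} g(\bar x)$, which is therefore nonempty.

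The main obstacle is this chart-transfer step. One must check carefully that the Fréchet inequality survives the $\mathcal{C}^1$ change of variables, with $o(\cdot)$ terms uniform enough. One must also check that the $v^k$ stay bounded, which follows from uniform invertibility of $D\psi$ near $0$.

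A possible shortcut, which I would try first, is to cite the robustness and nonemptiness results of \cite{Khanh2026} directly. An alternative is to note that $g+\delta_\mathcal{M}$ restricted to a compact neighborhood attains a local minimum of a perturbed function and apply \cite[Corollary 8.10]{RockafellarWets2009}: $\hat\partial$ is nonempty on a dense subset of the domain. Combined with local boundedness from part 1 and the robustness property, this would give nonemptiness at $\bar x$ by a limiting argument.
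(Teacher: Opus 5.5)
Your proposal is correct. Part~1 is the paper's argument. The paper also uses the limiting sum rule $\partial(g+\delta_\mathcal{M})(\bar x)\subseteq \partial g(\bar x)+N_\mathcal{M}(\bar x)$, citing \cite{Mordukhovich2018} rather than \cite{RockafellarWets2009}. It starts from the definition, writing $w_\mathcal{M}=\proj_{T_{\bar x}\mathcal{M}}(w+n)=\proj_{T_{\bar x}\mathcal{M}}(w)$, instead of going through the intersection characterization \eqref{eq:projsubdiffchar}.

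One small overstatement: ``bounded on bounded subsets of $\mathcal{M}$'' can fail when $\mathcal{M}$ is not closed. The Lipschitz modulus of $g$ may blow up towards the boundary of the open superset; take $\mathcal{M}=(0,1)\times\{0\}\subset\R^2$ and $g(x)=\sqrt{x_1}$ on $\{x_1>0\}$. What you actually get, and what the lemma asserts, is boundedness near each point.

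For part~2, the paper follows what you list as your fallback. \cite[Corollary 8.10]{RockafellarWets2009} yields $x^k\to\bar x$ with $(g+\delta_\mathcal{M})(x^k)\to g(\bar x)$ and $\partial(g+\delta_\mathcal{M})(x^k)\neq\emptyset$, hence $\partial_\mathcal{M} g(x^k)\neq\emptyset$. Part~1 bounds any selection, and the robustness of $\partial_\mathcal{M}$ from \cite{Khanh2026} passes to the limit.

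Your main route is a genuinely different argument and is also valid: pull back to a chart, push Fréchet subgradients of $g\circ\psi$ forward, and close with the definition of the limiting subdifferential. The obstacle you flag is milder than you think. The Fréchet inequality is checked separately at each $x^k$, so the $o(\cdot)$ terms may depend on $k$ and no uniformity is needed. The only uniform ingredient is a positive lower bound on the smallest singular value of $D\psi(z^k)$ near $0$. That bound makes $v^k=D\psi(z^k)\big(D\psi(z^k)^*D\psi(z^k)\big)^{-1}u^k$ bounded, in fact convergent.

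The trade-off is as follows. The paper's proof is short and coordinate-free, but it rests on two external results (density of Fréchet subdifferentiability for lsc functions, and robustness of $\partial_\mathcal{M}$) together with part~1. Yours needs only the classical nonemptiness of $\partial h(0)$ for Lipschitz $h$ on $\R^d$, plus an implicit-function chart with Lipschitz inverse. In return it gives more information: every $u\in\partial(g\circ\psi)(0)$ equals $D\psi(0)^*v$ for some $v\in\partial_\mathcal{M} g(\bar x)$.
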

\begin{proof}
As $g$ is locally Lipschitz continuous, the sum-rule for the limiting subdifferential as in \cite[Corallary 2.20]{Mordukhovich2018} gives $\partial(g + \delta_\mathcal{M})(\bar x) \subseteq \partial g(\bar x) + N_\mathcal{M}(\bar x)$. Hence, for every $w_\mathcal{M} \in \partial_\mathcal{M} g(\bar x)$, there exists $w \in \partial g(\bar x)$ and $n \in N_\mathcal{M}(\bar x)$ with $w_\mathcal{M} = \proj_{T_{\bar x} \mathcal{M}} (w+n) = \proj_{T_{\bar x} \mathcal{M}}(w)$, as $N_\mathcal{M}(\bar x) = (T_{\bar x} \mathcal{M})^\perp$. The local boundedness of $\partial_\mathcal{M} g$ now follows directly from the corresponding property of the classical subdifferential for locally Lipschitz functions.

As in \cite{Meng2021}, the non-emptiness of $\partial_\mathcal{M} g(\bar x)$ follows from \cite[Corollary 8.10]{RockafellarWets2009}, which states that as $g+\delta_\mathcal{M}$ is finite and locally lower semicontinuous at $\bar x$, there exists a sequence $\{\xk\}$ with $\xk \to \bar x$, $(g + \delta_\mathcal{M})(\xk) \to (g + \delta_\mathcal{M})(\bar x)$ and $\partial (g+\delta_M)(\xk) \neq \emptyset$. By consequence, we have $\partial_\mathcal{M}g (\xk) \neq \emptyset$ for all $k$ large enough. Further, choosing $v^k \in \partial_\mathcal{M} g(\xk)$ gives a bounded sequence by the first part. Hence, on a subsequence, $v^k$ converges to some $\bar v$ and by the robustness property we have $\bar v \in \partial_\mathcal{M}g(\bar x)$, making $\partial_\mathcal{M}g(\bar x)$ non-empty.
\end{proof}

\begin{remark}
Our algorithmic theory later is formulated in terms of the projectional limiting subdifferential. However, the conclusions remain valid for other subdifferentials. The key properties are the robustness property mentioned above and boundedness of the subdifferential for locally Lipschitz functions. In the Euclidean setting, these are shared by the classical limiting and the Clarke subdifferential, while the Fréchet subdifferential is not robust in this sense. Note that in some applications, Clarke subgradients can be computed (see \cite{aragon2025}), however, the notion of stationarity in terms of the Clarke subdifferential is weaker. For our manifold setting, \cite{Hosseini2018} also introduced a Clarke-type subdifferential on Riemannian manifolds for which also a Riemannian Kurdyka–Łojasiewicz property was developed in \cite{Huang2022}. These subdifferentials can be defined for arbitrary manifolds, not only embedded Riemannian submanifolds. The advantage of projectional subdifferentials is that for submanifolds, properties on the ambient Euclidean space can be transferred to properties of the projectional subdifferential as in Section~\ref{subsec:upperc2subm}.
\end{remark}

\subsection{Upper-$\mathcal{C}^2$ functions}

We next introduce the class of upper-$ \mathcal{C}^2 $ functions that will play
a central role for the design and the convergence analysis of our
descent method. For more details on this class of functions, see
\cite{RockafellarWets2009}.

\begin{definition}\label{def:upperC2}
Let $ U \subseteq \X$ be an open set. We say that a function
$ \varphi: U \to \mathbb{R} $ is {\normalfont upper-$ \mathcal{C}^2 $} on $ U $, if, 
on some neighborhood $ V $ of each $ \bar{x} \in U $, there is a 
representation 
\begin{equation*}
	\varphi (x) = \min_{c \in C} \varphi_c (x),
\end{equation*}
where the functions $ \varphi_c $ are of class $\mathcal{C}^2 $ on $ V $, and 
$ C $ is a compact set (in some topological space) such that 
$ \varphi_c $ and its first- and second-order partial derivatives
depend continuously on $ (x,c) \in V \times C $.
\end{definition}

Taking the discrete topology, it follows, for example, that functions of
the form $ \varphi (x) := \min \big\{ f_1(x), \ldots, f_l(x) \big\} $ with
twice continuously differentiable functions $ f_i: U \to \mathbb{R} $
on some open set $ U \subseteq \X$ are upper-$\mathcal{C}^2 $ functions.
Further examples can be derived from the subsequent characterization
of upper-$\mathcal{C}^2$ functions from the recent 
report \cite[Prop.\ 3.2]{aragon2025} that will be particularly relevant 
for our setting.

\begin{proposition}\label{Prop:CharUpperC2}
Let $ U \subseteq \X $ be an open set and $ \varphi: \X
\to \mathbb{R} $ be locally Lipschitz on $ U $. Then the following statements
are equivalent:
\begin{itemize}
	\item[(a)] $ \varphi $ is upper-$\mathcal{C}^2 $ on $ U $.
	\item[(b)] For each $ \bar{x} \in U $, there exist a constant
	   $ \kappa \geq 0 $ and some neighborhood $ V $ of $ \bar{x} $ such 
	   that
	   \begin{equation}\label{eq:descentproperty}
	   	  \varphi (y) \leq \varphi (x) + \langle w, y-x \rangle + 
	   	  \kappa \| y - x \|^2
	   \end{equation}
	   for all $ x, y \in V $ and all $ w \in \partial \varphi (x) $.
 	\item[(c)] For each $ \bar{x} \in U $, there exists some neighborhood
 	   $ V $ of $ \bar{x} $ such that $ \varphi $ can be expressed as
 	   $ \varphi = g - h $, where $ g $ is differentiable with Lipschitz
		gradient, and $ h $ is Lipschitz and prox-regular (see \cite[Definition 3.27]{RockafellarWets2009}). Indeed, one can 
 	   take $ g = \kappa \| \cdot \|^2 $, for some $ \kappa \geq 0 $, and 
 	   $ h $ to be convex.
\end{itemize}
\end{proposition}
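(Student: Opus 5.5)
The plan is to prove the cycle (a) $\Rightarrow$ (c) $\Rightarrow$ (b) $\Rightarrow$ (a), working locally around a fixed $\bar x \in U$ throughout.

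\emph{Step 1: (a) $\Rightarrow$ (c).} Take the local representation $\varphi = \min_{c \in C} \varphi_c$ on a convex neighborhood $V$ of $\bar x$; shrinking $V$ to a closed ball, we may assume $V$ has compact closure. By joint continuity of the Hessians on $\mathrm{cl}(V) \times C$, which is compact, there is $\kappa \geq 0$ with $\nabla^2 \varphi_c(x) \preceq 2\kappa I$ for all $(x,c)$. Then each $h_c := \kappa\|\cdot\|^2 - \varphi_c$ is convex on $V$. Hence $h := \kappa\|\cdot\|^2 - \varphi = \max_c h_c$ is convex, being a pointwise maximum of convex functions. It is Lipschitz on $V$ because $\varphi$ is. Convex functions are prox-regular, so $\varphi = \kappa\|\cdot\|^2 - h$ is the desired decomposition.

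\emph{Step 2: (c) $\Rightarrow$ (b).} Write $\varphi = \kappa\|\cdot\|^2 - h$ with $h$ convex on a convex neighborhood $V$. Since $\kappa\|\cdot\|^2$ is smooth, the sum rule gives
\[
\partial \varphi(x) = 2\kappa x + \partial(-h)(x).
\]
The key claim is $\partial(-h)(x) \subseteq -\partial h(x)$. To see this, note first that $\hat\partial(-h)(x) \subseteq -\hat\partial h(x)$. Indeed, if $\hat\partial(-h)(x)$ is nonempty, then $-h$ is Fréchet subdifferentiable at $x$ while the concave function $-h$ is also Fréchet superdifferentiable there; this forces differentiability of $h$ at $x$. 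The limiting case then follows from outer semicontinuity of the convex subdifferential $\partial h$.

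Thus for $w \in \partial\varphi(x)$ we have $w = 2\kappa x - u$ with $u \in \partial h(x)$. Convexity gives $h(y) \geq h(x) + \langle u, y-x\rangle$. Combining this with the exact identity
\[
\kappa\|y\|^2 = \kappa\|x\|^2 + \langle 2\kappa x, y-x\rangle + \kappa\|y-x\|^2
\]
yields \eqref{eq:descentproperty} with the same $\kappa$. If only the general form ($g$ with $L$-Lipschitz gradient, $h$ prox-regular) is assumed, one instead uses the descent lemma for $g$ and the prox-regularity inequality for $h$. That inequality holds locally with a quadratic error term, and the $\varphi$-attentive localization is automatic because $h$ is Lipschitz, hence continuous.

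\emph{Step 3: (b) $\Rightarrow$ (a).} For each $x \in V$ pick some $w_x \in \partial\varphi(x)$; this set is nonempty and bounded by local Lipschitz continuity. Define
\[
\varphi_x(y) := \varphi(x) + \langle w_x, y-x\rangle + \kappa\|y-x\|^2 .
\]
By (b), $\varphi \leq \varphi_y$ for every $y \in V$, with equality at $y$ itself, so $\varphi = \min_{x} \varphi_x$ on $V$. The difficulty is that the index set must be compact and the dependence on the index continuous, and $x \mapsto w_x$ need not be continuous.

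The fix is to use the parameter $c = (x, w, \alpha) \in C$, where
\[
C := \mathrm{cl}\{(x,w,\varphi(x)) : x \in V',\ w \in \partial\varphi(x)\}
\]
for a smaller ball $V' \Subset V$, and set $\varphi_c(y) = \alpha + \langle w, y - x\rangle + \kappa\|y-x\|^2$. The set $C$ is compact by boundedness of $\partial\varphi$ and continuity of $\varphi$. The inequality $\varphi \leq \varphi_c$ survives taking closures: limits of such triples satisfy $\alpha = \varphi(x)$, and by closedness of the graph of $\partial\varphi$ under $\varphi$-convergence we get $w \in \partial\varphi(x)$, so (b) still applies. Each $\varphi_c$ is a quadratic in $y$ whose coefficients depend continuously on $c$, so all derivatives depend continuously on $(y,c)$.

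I expect this compactification in Step 3, together with the inclusion $\partial(-h) \subseteq -\partial h$ in Step 2, to be the main technical points.
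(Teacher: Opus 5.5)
The paper does not prove this proposition; it imports it from \cite[Prop.~3.2]{aragon2025}. Your cycle (a)$\Rightarrow$(c)$\Rightarrow$(b)$\Rightarrow$(a) therefore has to stand on its own, and most of it does. Step~1 is the classical ``lower-$\mathcal{C}^2$ = convex minus quadratic'' argument from \cite{RockafellarWets2009} applied to $-\varphi$, and it is correct as written. Step~3 is also correct:
\begin{itemize}
\item $C$ is compact.
\item The majorization $\varphi\le\varphi_c$ passes to limit triples. It even does so directly, since $(x,w,\alpha)\mapsto \alpha+\langle w,y-x\rangle+\kappa\|y-x\|^2$ is continuous for fixed $y$ and the approximating $x^k$ lie in $V$.
\item For $y\in V'$ the minimum is attained at $c=(y,w,\varphi(y))$ for any $w\in\partial\varphi(y)\neq\emptyset$.
\end{itemize}
Step~2 is complete for the special decomposition $g=\kappa\|\cdot\|^2$ with $h$ convex.

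The gap is the general form of (c). That general form is the hypothesis the equivalence actually requires; the ``Indeed'' sentence only strengthens the conclusion of (a)$\Rightarrow$(c). Your one-line sketch omits two points.

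(i) For $w\in\partial\varphi(x)=\nabla g(x)+\partial(-h)(x)$ you must relate $\partial(-h)(x)$ to $\partial h(x)$. In the convex case you obtained $\partial(-h)(x)\subseteq-\partial h(x)$ from the Fr\'echet superdifferentiability of the concave function $-h$. A prox-regular $h$ does not give you this automatically, and the prox-regularity inequality is stated only for $v\in\partial h(x)$, not for $-z$ with $z\in\partial(-h)(x)$.

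(ii) Prox-regularity is, by definition, localized around a single $\bar v\in\partial h(\bar x)$: the constants $\epsilon,\rho$ depend on $\bar v$, and the inequality is only asserted for $\|v-\bar v\|<\epsilon$. You need one inequality $h(y)\ge h(x)+\langle v,y-x\rangle-\frac{\rho}{2}\|y-x\|^2$ valid for all $x,y$ in a fixed neighborhood and all $v\in\partial h(x)$. Getting it requires compactness of $\partial h(\bar x)$ together with local boundedness and outer semicontinuity of $\partial h$. These ensure that $\partial h(x)$ is covered by finitely many of the $\epsilon$-balls for $x$ near $\bar x$, so you can take the minimum of the $\epsilon$'s and the maximum of the $\rho$'s.

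Once (ii) is in hand, (i) follows in either of two ways:
\begin{itemize}
\item By your own argument: every $v\in\partial h(x)\neq\emptyset$ is now a proximal subgradient, so a Fr\'echet subgradient $z$ of $-h$ at $x$ forces $v=-z$. Closedness of $\mathrm{gph}\,\partial h$ then handles limiting subgradients.
\item Via Clarke calculus: $\partial(-h)(x)\subseteq\partial_C(-h)(x)=-\mathrm{conv}\,\partial h(x)$, and you average the uniform inequalities over the convex combination.
\end{itemize}
Adding the descent lemma for $g$ (with $\nabla g$ being $L$-Lipschitz) then yields (b) with $\kappa=(L+\rho)/2$.
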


The characterization from part (b) is particularly interesting in our
case. The inequality \eqref{eq:descentproperty} is the counterpart of
the usual descent lemma for smooth functions with Lipschitz gradient,
the constant $ \kappa $ in \eqref{eq:descentproperty} plays the role
of the corresponding Lipschitz constant. We stress, however, that 
$ \kappa $ in \eqref{eq:descentproperty} is a local constant depending
on the given point $ \bar{x} $. Furthermore, it is worth noting that
the inequality \eqref{eq:descentproperty} holds for an arbitrary
subgradient $ w \in \partial \varphi (x) $, not just for a particular 
element from $ \partial \varphi (x) $. This observation is highly important for
our descent method to be well-defined. Finally, we note that 
part (c) shows that upper-$\mathcal{C}^2 $ functions are closely related to the
class of DC-functions (DC = difference-of-convex). In particular, this
characterization provides several further examples of nonsmooth 
upper-$\mathcal{C}^2 $ functions. In this respect, we refer the interested reader
also to the corresponding discussion in \cite{aragon2025}.

\subsection{Descent Property of Upper-$\mathcal{C}^2$ Functions on Submanifolds}\label{subsec:upperc2subm}

Before presenting our algorithm in the next section, we establish the following result, which shows that upper-$\mathcal{C}^2$ functions defined on an open superset of an embedded Riemannian submanifold $\mathcal{M} \subseteq \X$ satisfy a certain descent property. This condition is inspired by the retraction smoothness condition in \cite{Boumal2018}, where it was used for differentiable functions as a generalization to the well-known descent lemma to optimization problems on manifolds. Note again, however, that our Assumption~\ref{descentas} is both nonsmooth and merely local.

\begin{assumption}\label{descentas}
	Let $\mathcal{M}$ be a manifold with retraction $\retr$ and assume that for all $\bar x \in \mathcal{M}$ there exists a constant $\tilde \kappa > 0$, $r>0$ and an open neighborhood $V\subseteq \mathcal{M}$ of $\bar x$, such that the objective function $\varphi: \mathcal{M} \to \R$ from \eqref{eq:prob} satisfies
	\begin{equation}\label{eq:descentpropm}
		\varphi \big(\retr_x(\xi_x)\big) - \big(\varphi(x) + \langle w_\mathcal{M}, \xi_x \rangle\big) \leq \tilde \kappa \|\xi_x\|^2,
	\end{equation}
	for all $x \in V$, $\xi_x \in \clball_r(0_x) \subset T_x \mathcal{M}$ and all $w_\mathcal{M} \in \partial_\mathcal{M} \varphi(x)$. Further, assume that $\partial_\mathcal{M} \varphi$ is locally bounded.
\end{assumption}
Let us show that it is sufficient for $\varphi$ to be upper-$\mathcal{C}^2$ on an open superset of $\mathcal{M}$ for $\varphi$ to satisfy \eqref{eq:descentpropm}. To do so, we first need the following properties of retractions. These are somewhat standard and similar arguments appear in the proof of \cite[Lemma 2.7]{Boumal2018}. However, as they are also used later on, we present them as an independent lemma.
\begin{lemma}\label{lemma:propretr}
	Let $\mathcal{M} \subseteq \mathcal{E}$ be a smooth embedded manifold and $\retr: T\mathcal{M} \to \mathcal{M}$ a retraction on $\mathcal{M}$. Let $C \subseteq \mathcal{M}$ be compact. Then there exists a radius $r > 0$ and constants $\alpha, \beta \geq 0$ such that for all $x \in C$ the inequalities
	\begin{align}
		\|\retr_x(\xi_x) - x \| &\leq \alpha \| \xi_x\|,\label{eq:retrprop1}\\ 
		\|\retr_x(\xi_x) -x-\xi_x\| &\leq \beta \| \xi_x\|^2\label{eq:retrprop2}
	\end{align}
	hold for all $\xi_x \in \clball_r(0_x) \subseteq T_x\mathcal{M}$.
\end{lemma}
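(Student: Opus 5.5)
The plan is a compactness argument on a compact neighborhood of the zero section of the tangent bundle over $C$, combined with Taylor's theorem for the smooth map $\retr$ viewed as a map into the ambient space $\X$. First I would fix some $r>0$ and consider the set
\begin{equation*}
	K := \{(x,\xi_x) \in T\mathcal{M} \mid x \in C,\ \|\xi_x\| \leq r\}.
\end{equation*}
Since $T\mathcal{M}$ is a closed subset of $\mathcal{M}\times\X$ (locally, via a defining function $F$, it is $\{(x,\xi) : F(x)=0,\ DF(x)\xi=0\}$) and $C$ is compact, $K$ is compact. Because the retraction is globally defined (Remark~\ref{remark:boundedefradr}), any $r>0$ works. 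Under only a radius of definition bounded away from zero on $C$, I would instead take $r$ to be half that bound.

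For \eqref{eq:retrprop2}, fix $(x,\xi_x)\in K$ and consider the curve $c(t) := \retr_x(t\xi_x)\in\X$ for $t\in[0,1]$. This curve is $\mathcal{C}^2$ as a map into $\X$. By the retraction properties, $c(0)=x$ and $c'(0)=D\retr_x(0)[\xi_x]=\xi_x$. Taylor's theorem with integral remainder then gives
\begin{equation*}
	\|\retr_x(\xi_x)-x-\xi_x\| \leq \tfrac12 \sup_{t\in[0,1]}\|c''(t)\|,
\end{equation*}
where $c''(t) = D^2\retr_x(t\xi_x)[\xi_x,\xi_x]$. The crucial step is a uniform bound $\|D^2\retr_x(\eta)[\xi,\xi]\|\leq 2\beta\|\xi\|^2$ over $(x,\eta)\in K$ and $\xi\in T_x\mathcal{M}$. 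I would obtain it from continuity of the second derivative of $\retr$ on the compact set $K$.

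This is where I expect the main technical care. The second derivative of $\retr_x$ along the fibre has to be expressed in a way that varies continuously in $x$. I would handle this locally: near each point of $C$, extend $\retr$ to a $\mathcal{C}^2$ map $\tilde\retr(x,\xi) := \retr(x,P_x\xi)$ on an open subset of $\X\times\X$, with $P_x=\proj_{T_x\mathcal{M}}$. The map $P_x$ is smooth in $x$ because $T_x\mathcal{M}=\ker DF(x)$ and $\nabla F$ has full rank. Then $D^2_\xi\tilde\retr$ is continuous, so it is bounded on a compact neighbourhood, and a finite subcover of $C$ gives uniform constants. Homogeneity in $\xi$ then yields the quadratic bound.

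Finally, \eqref{eq:retrprop1} follows from \eqref{eq:retrprop2} by the triangle inequality:
\begin{equation*}
	\|\retr_x(\xi_x)-x\| \leq \|\xi_x\|+\beta\|\xi_x\|^2 \leq (1+\beta r)\|\xi_x\|,
\end{equation*}
so $\alpha := 1+\beta r$ works. Alternatively, one can apply the mean value inequality with a uniform bound on $D\retr_x$ over $K$.
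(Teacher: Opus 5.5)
Your proof is correct and follows essentially the same route as the paper: compactness of $\{(x,\xi_x)\in T\mathcal{M} : x\in C,\ \|\xi_x\|\le r\}$, a uniform bound on $D^2\retr$ over that set, and a second-order Taylor/fundamental-theorem-of-calculus estimate along $t\mapsto\retr_x(t\xi_x)$. The paper writes this estimate as two nested integrals, which gives $\beta=\tfrac12\max_K\|D^2\retr_z(\zeta_z)\|$. The differences are minor: you derive \eqref{eq:retrprop1} from \eqref{eq:retrprop2} with $\alpha=1+\beta r$, whereas the paper uses a separate bound $\max_K\|D\retr_z(\zeta_z)\|$. You also make explicit, via $\retr(x,P_x\xi)$, the joint continuity of the fibre derivatives that the paper takes for granted.
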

\begin{proof}
	Let $r>0$. By assumption, $\retr$ is defined and smooth on the set $K_0 := \{(x, \xi_x) \in T\mathcal{M}: x \in C, \|\xi_x\|\leq r\}$. Note that this set is compact by \cite[Lemma C.8]{levin2023} and hence (by continuity), $\tilde C := \retr(K_0)$ is compact, too. Now, denote

	\begin{equation}\label{eq:defkset}
		K:=\{(x, \xi_x) \in T\mathcal{M} \mid x \in \tilde C, \|\xi_x\| \leq r\},
	\end{equation}
	which again is compact by the same argument.

For the remaining part, we follow the proof of \cite[Lemma 2.7]{Boumal2018}. We first prove equation \eqref{eq:retrprop1}: Let $x \in C$, then for all $\xi_x \in \clball_r(0_x) \subseteq T_x \mathcal{M}$, we have
	\begin{align*}
		\|\retr_x(\xi_x)-x\| &\leq \int_0^1 \left\| \frac{\diff}{\diff t} \retr_x(t \xi_x) \right\| \diff t = \int_0^1 \|D\retr_x(t \xi_x)[\xi_x]\|\diff t \\
		& \leq \int_0^1 \max_{(z, \zeta_z) \in K} \|D\retr_z(\zeta_z)\| \|\xi_x\| \diff t = \max_{(z, \zeta_z) \in K} \|D\retr_z(\zeta_z)\|\|\xi_x\| =: \alpha \|\xi_x\|,
	\end{align*}
	where the maximum exists and is finite due to the smoothness of the retraction and the compactness of $K$.

	The second claim \eqref{eq:retrprop2} follows along similar lines: Let again $x \in C$, $\xi_x \in \clball_r(0_x)$, then
	\begin{align*}
		\|\retr_x(\xi_x) - x - \xi_x \| & = \|\retr_x(\xi_x) - \retr_x(0_x) - \xi_x \| \leq \int_0^1 \left\|\frac{\diff}{\diff t} \Big(\retr_x(t \xi_x) - t\xi_x\Big) \right\| \diff t \\
		& \leq \int_0^1 \|D \retr_x(t \xi_x)[\xi_x] - \xi_x\|\diff t \leq \int_0^1 \|D \retr_x(t \xi_x) - \mathrm{id}_{T_x \mathcal{M}}\| \|\xi_x\| \diff t\\
		&\leq \frac{1}{2} \max_{(z, \zeta_z) \in K} \| D^2 \retr_z (\zeta_z)\| \|\xi_x\|^2,
	\end{align*}
	where the last inequality follows from $\mathrm{id}_{T_x \mathcal{M}} = D\retr_x(0_x)$ and the following calculation:
	\begin{equation*}
		\|D\retr_x(t\xi_x) - \mathrm{id}_{T_x\mathcal{M}}\| \leq \int_0^1 \left\| \frac{\diff}{\diff s} D\retr_x(s t \xi_x) \right\| \diff s \leq \|t \xi_x\| \int_0^1 \| D^2 \retr_x(st\xi_x)\| \diff s.
	\end{equation*}
	Again, as $K$ is compact, the maximum above exists and hence \eqref{eq:retrprop2} follows with $\beta:= \frac{1}{2} \max_{(z, \zeta_z) \in K} \| D^2 \retr_z (\zeta_z)\|$.
\end{proof}

Finally, we prove a nonsmooth version of a corresponding result in \cite{Boumal2018}:
\begin{lemma}
	Let $\mathcal{M}$ be a (smooth embedded Riemannian) submanifold such that $\mathcal{M} \subseteq \mathcal{O} \subseteq \mathcal{E}$, where $\mathcal{O}$ is open. Further, assume that $\varphi$ is upper-$\mathcal{C}^2$ on $\mathcal{O}$ and let $\retr$ be a retraction on $\mathcal{M}$. Then Assumption~\ref{descentas} is satisfied.
\end{lemma}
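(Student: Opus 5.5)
Fix $\bar x \in \mathcal{M}$. By Proposition~\ref{Prop:CharUpperC2}(b), applied to $\varphi$ on $\mathcal{O}$ (upper-$\mathcal{C}^2$ functions are locally Lipschitz, being locally a minimum of a continuous family of $\mathcal{C}^2$ functions), there is $\kappa \geq 0$ and an open neighborhood $W \subseteq \mathcal{O}$ of $\bar x$ such that \eqref{eq:descentproperty} holds for all $x,y \in W$ and all $w \in \partial\varphi(x)$. Shrinking $W$, we may assume $\varphi$ is Lipschitz on $W$, so $\partial \varphi$ is bounded there by some $L$. Choose $\rho>0$ with $B_{2\rho}(\bar x)\subseteq W$. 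Take $C := \mathcal{M}\cap \clball_\rho(\bar x)$, which is compact because an embedded submanifold is locally closed (shrink $\rho$ if needed). Lemma~\ref{lemma:propretr} then yields $r_0>0$ and $\alpha,\beta$ with \eqref{eq:retrprop1}--\eqref{eq:retrprop2} on $C$. Set $V := \mathcal{M}\cap B_\rho(\bar x)$ and $r := \min\{r_0, \rho/\alpha\}$ (or $r=r_0$ if $\alpha=0$). Then for $x\in V$ and $\|\xi_x\|\le r$ we get $\|\retr_x(\xi_x)-\bar x\| < 2\rho$, so both $x$ and $y:=\retr_x(\xi_x)$ lie in $W$.

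Next, I handle the projectional subgradient. Let $w_\mathcal{M}\in\partial_\mathcal{M}\varphi(x)$. By Lemma~\ref{lem:projsubdiff}(1) there exists $w\in\partial\varphi(x)$ with $w_\mathcal{M}=\proj_{T_x\mathcal{M}}(w)$. Since $\xi_x\in T_x\mathcal{M}$, this gives $\langle w,\xi_x\rangle = \langle w_\mathcal{M},\xi_x\rangle$. Applying \eqref{eq:descentproperty} with this $w$ and $y$ gives
\begin{equation*}
\varphi(y) \le \varphi(x) + \langle w, y-x\rangle + \kappa\|y-x\|^2 .
\end{equation*}
Now split $\langle w, y-x\rangle = \langle w,\xi_x\rangle + \langle w, y-x-\xi_x\rangle$. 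Bound the second term by $L\beta\|\xi_x\|^2$ via \eqref{eq:retrprop2}, and bound $\kappa\|y-x\|^2$ by $\kappa\alpha^2\|\xi_x\|^2$ via \eqref{eq:retrprop1}. Consequently \eqref{eq:descentpropm} holds with $\tilde\kappa := L\beta + \kappa\alpha^2$, increased slightly if necessary to make it positive.

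Local boundedness of $\partial_\mathcal{M}\varphi$ follows directly from Lemma~\ref{lem:projsubdiff}(1). The main obstacle is the bookkeeping of neighborhoods: $V$ and $r$ must be chosen so that the compact set $C$ required by Lemma~\ref{lemma:propretr} covers $V$, and the retracted points $\retr_x(\xi_x)$ stay inside the neighborhood where the Euclidean descent inequality and the subgradient bound $L$ are valid. Compactness of $C$ relies on local closedness of $\mathcal{M}$, which follows from Definition~\ref{def:manifold} since $\mathcal{M}\cap U = F^{-1}(0)$.
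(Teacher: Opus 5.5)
Your proof is correct and takes the same route as the paper. You use the local Euclidean descent inequality from Proposition~\ref{Prop:CharUpperC2}(b), then the representation $w_\mathcal{M}=\proj_{T_x\mathcal{M}}(w)$ with $w\in\partial\varphi(x)$ to replace $\langle w,\xi_x\rangle$ by $\langle w_\mathcal{M},\xi_x\rangle$, and finally the retraction bounds \eqref{eq:retrprop1}--\eqref{eq:retrprop2}, which give $\tilde\kappa=L\beta+\kappa\alpha^2$. The only difference is in the neighborhood bookkeeping: you keep $\retr_x(\xi_x)$ inside the descent neighborhood via \eqref{eq:retrprop1} and an explicit compact set $\mathcal{M}\cap\clball_\rho(\bar x)$, whereas the paper uses continuity of $\retr$ and a tube neighborhood in $T\mathcal{M}$. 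Your version is, if anything, more explicit, including the uniformity of the subgradient bound in $x$.
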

\begin{proof}
	The local boundedness of $\partial_\mathcal{M} \varphi$ is a direct consequence of Lemma~\ref{lem:projsubdiff}.

	By Lemma~\ref{lemma:propretr}, there exists some $r > 0$ such that \eqref{eq:retrprop1} and \eqref{eq:retrprop2} hold (also in an open neighborhood). Further, by Proposition~\ref{Prop:CharUpperC2}, we have that for every point $\bar x$ in $\mathcal{O}$, the descent property \eqref{eq:descentproperty} holds for all $x, y$ in an open neighborhood $V$. As $\retr$ is continuous, $\retr^{-1}(V)$ is an open neighborhood of $(\bar x, 0_{\bar x}) \in T \mathcal{M}$. Hence (see e.g.\ \cite{doCarmo1992}), there exists an open neighborhood $U \subseteq \mathcal{M}$ of $\bar x$ and $r>0$ such that
	\begin{equation*}
		\mathcal{U} := \{(x, \xi_x) \in T\mathcal{M} \mid x \in U, \|\xi_x\| < r\} \subseteq \retr^{-1}(V),
	\end{equation*}
	and $\mathcal{U}$ is an open neighborhood of $(\bar x, 0_{\bar x})$ in $T\mathcal{M}$. Thus, by taking $x$ from a possibly smaller open neighborhood, we can choose $r$ small enough such that both properties above hold for all $\xi_x \in \clball_r(x)$, with $y = \retr_x(\xi_x)$ in \eqref{eq:descentproperty}, that is
	\begin{equation}\label{eq:descentproperty2}
		\varphi(\retr_x(\xi_x)) \leq \varphi(x) + \langle w, \retr_x(\xi_x) - x \rangle + \kappa \|\retr_x(\xi_x) - x\|^2
	\end{equation}
	holds for all $w \in \partial \varphi(x)$. 

	Now, let $w_\mathcal{M} \in \partial_\mathcal{M} \varphi(x)$. As in the proof of Lemma~\ref{lem:projsubdiff}, we find $w \in \partial \varphi(x)$ and $n \in N_\mathcal{M}(x)$, such that $w_\mathcal{M} = w+n$. It now follows from the orthogonality $n \perp \xi_x$ that
	\begin{align*}
		\langle w, \retr_x(\xi_x) - x\rangle &= \langle w, \xi_x + \retr_x(\xi_x) - x - \xi_x\rangle\\
			&= \langle w_\mathcal{M}, \xi_x\rangle + \langle w, \retr_x(\xi_x) - x - \xi_x\rangle.
	\end{align*}
	Thus, by \eqref{eq:descentproperty2}, we have
	\begin{align*}
		\varphi(\retr_x(\xi_x)) &\leq \varphi(x) + \langle w_\mathcal{M}, \xi_x \rangle + \langle w, \retr_x(\xi_x) - x - \xi_x\rangle + \kappa \|\retr_x(\xi_x) - x\|^2\\
			&\leq \varphi(x) + \langle w_\mathcal{M}, \xi_x \rangle + \|w\| \|\retr_x(\xi_x) - x - \xi_x\| + \kappa \|\retr_x(\xi_x) - x\|^2.
	\end{align*}
	Now, the set $\partial \varphi(x)$ is bounded by assumption, thus there exists $G\geq 0$ (independent of the choice of $w_\mathcal{M}$), such that $\|w\|\leq G$ in the equation above.

	Combining \eqref{eq:retrprop1} and \eqref{eq:retrprop2} with the calculation above, we deduce
	\begin{equation*}
		\varphi(\retr_x(\xi_x)) \leq \varphi(x) + \langle w_\mathcal{M}, \xi_x \rangle + (G\beta + \kappa \alpha^2) \|\xi_x\|^2.
	\end{equation*}
	As $w_\mathcal{M} \in \partial_\mathcal{M} \varphi(x)$ and $\xi_x \in \clball_r(0_x)$ were arbitrary, the claim follows.
\end{proof}

\section{Descent Method and Global Convergence}\label{Sec:Algorithm}

Before discussing our method and its convergence properties, let us first provide a motivation for the proposed algorithm. The standard method for the unconstrained minimization of a continuously differentiable 
function $ \varphi $ is based on the iteration
\begin{equation}\label{eq:opt-iteration}
	\xkp := \xk + \tau_k \dk, 
\end{equation}
with a search direction $ \dk \in \mathbb{R}^n $ satisfying the descent
property $ \langle \nabla \varphi (\xk), \dk \rangle < 0 $ and a 
stepsize $ \tau_k $ such that a suitable line search criterion holds
like the Armijo rule 
\begin{equation}\label{eq:Armijo}
	\varphi (\xk + \tau_k \dk) \leq \varphi (\xk) + \sigma \tau_k
	\langle \nabla \varphi (\xk), \dk \rangle
\end{equation}
for some constant $ \sigma \in (0,1) $. The descent method considered 
in this section is a direct generalization of this approach to the class
optimization problems on some manifold $\mathcal{M}$ with objective functions $ \varphi $ which are upper-$\mathcal{C}^2$ (on an open superset of $\mathcal{M}$ in the embedding space). 
In the Euclidean setting, which was considered in \cite{aragon2025}, the
monotone version is based on the iteration \eqref{eq:opt-iteration}
with some search direction $ d^k $ satisfying the descent-like property
$ \langle \wk, \dk \rangle < 0 $ for an arbitrary element $ \wk \in 
\partial \varphi (\xk) $. The counterpart of the (monotone) Armijo rule 
\eqref{eq:Armijo} reads
\begin{equation*}
	\varphi (\xk + \tau_k \dk) \leq \varphi (\xk) + \sigma \tau_k
	\langle \wk, \dk \rangle
\end{equation*}
for some $ \sigma \in (0,1) $. 

Linesearch methods have also become popular for problems constrained to smooth manifolds, see e.g.\ \cite{Absil2008,Boumal2018,Oviedo2022}. The generalization from the Euclidean setting is straight-forward, by simply replacing the canonical retraction $x \mapsto x+\xi$ in the Euclidean setting by a retraction defined for the respective manifold. Under suitable assumptions on $\dk$, the linesearch criterion is given by 
\begin{equation*}
	\varphi(\retr_{\xk} (\dk))\leq \varphi(\xk) + \sigma \tau_k \langle \grad \varphi(\xk), \dk\rangle,
\end{equation*}
when $\varphi$ is a differentiable function. In our nonsmooth case, we will consider the following condition, where we choose some $w_\mathcal{M}^k \in \partial_\mathcal{M} \varphi(\xk)$ and a direction $\dk$:
\begin{equation*}
	\varphi(\retr_{\xk} (\dk))\leq \varphi(\xk) + \sigma \tau_k \langle w^k_\mathcal{M}, \dk\rangle.
\end{equation*}

In order to prove global convergence results, we need to impose some
conditions on the quality of the descent direction $ d^k $ and the 
underlying choice of the subgradients $ w_\mathcal{M}^k $ to ensure that $d^k$ remains related to the subgradients. The following conditions are similar to those already specified in \cite{aragon2025} for upper-$\mathcal{C}^2$ functions in the Euclidean case and also appear already in \cite{ZhangHager2004}. Note that these conditions are also standard for linesearch methods in smooth manifold optimization, see \cite{Boumal2018, Oviedo2022, Qian2024}, where a number of equivalent conditions are used.

\begin{assumption}\label{genas}
Assume:
\begin{itemize}
	\item[(a)] There exists a constant $a > 0$ such that 
		\begin{equation}\label{eq:wdbound1}
			\langle \wk_\mathcal{M}, \dk\rangle \leq - a \|\dk\|^2, \text{ for all } k \in \mathbb{N}.
		\end{equation}
	\item[(b)] There exists a constant $b >0$ such that
		\begin{equation}\label{eq:wdbound2}
			\|\wk_\mathcal{M}\| \leq b \|\dk\|, \text{ for all } k \in \mathbb{N}.
		\end{equation}
\end{itemize}
\end{assumption}
Note that for $\dk \neq 0$, we obtain from Cauchy-Schwarz that
\begin{equation}\label{eq:wdbound3}
	a \| \dk \| \leq \|\wk_\mathcal{M}\|,
\end{equation}
and also
\begin{equation}\label{eq:wdbound4}
	\langle -\wk_\mathcal{M}, \dk \rangle \geq a \| \dk\|^2 \geq \frac{a}{b} \|\dk\|\|\wk_\mathcal{M}\|.
\end{equation}

The following result (see \cite[Proposition 4.3]{aragon2025} for a similar result for upper-$\mathcal{C}^2$ functions in the Euclidean setting and \cite[Lemma 2.10]{Boumal2018} for the smooth case on a manifold) shows that the Armijo-type condition for our linesearch is satisfied for all $ \tau_k > 0 $ sufficiently small.

\begin{proposition}\label{prop:welldeflinesearch}
	Assume that $\varphi: \mathcal{M} \to \R$ satisfies Assumption~\ref{descentas}. Given any $\sigma \in (0, 1)$, $\xk \in \mathcal{M}$, $\dk \in T_{\xk}\mathcal{M}$ and $\wk_\mathcal{M} \in \partial_\mathcal{M} \varphi(\xk)$ such that $\wk_\mathcal{M}$ and $\dk$ satisfy Assumption~\ref{genas}, there exists a $t > 0$ such that
\begin{equation}\label{eq:welldef}
	\varphi\big(\retr_{\xk}(\tau \dk)\big) \leq \varphi(\xk) + \sigma \tau \langle \wk_\mathcal{M}, \dk\rangle, \text{ for all } \tau \in (0, t).
\end{equation}
\end{proposition}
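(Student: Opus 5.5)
Apply Assumption~\ref{descentas} at $\bar x := \xk$, with $x = \xk$ and $\xi_x = \tau \dk$. This gives constants $\tilde\kappa > 0$ and $r > 0$, and a neighborhood $V$ containing $\xk$. Since $\tau\dk \in T_{\xk}\mathcal{M}$ and $\|\tau \dk\| \le r$ whenever $\tau \le r/\|\dk\|$, inequality \eqref{eq:descentpropm} yields
\begin{equation*}
	\varphi\big(\retr_{\xk}(\tau\dk)\big) \le \varphi(\xk) + \tau \langle \wk_\mathcal{M}, \dk\rangle + \tilde\kappa \tau^2 \|\dk\|^2
\end{equation*}
for all $\tau \in (0, r/\|\dk\|]$. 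Because \eqref{eq:descentpropm} holds for every element of $\partial_\mathcal{M}\varphi(\xk)$, it applies in particular to the given $\wk_\mathcal{M}$.

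Next we absorb the quadratic term using Assumption~\ref{genas}(a). From $\|\dk\|^2 \le -\tfrac{1}{a}\langle \wk_\mathcal{M}, \dk\rangle$ we get
\begin{equation*}
	\varphi\big(\retr_{\xk}(\tau\dk)\big) \le \varphi(\xk) + \tau\Big(1 - \frac{\tilde\kappa\tau}{a}\Big)\langle \wk_\mathcal{M}, \dk\rangle.
\end{equation*}
The inner product is nonpositive. Hence the right-hand side is at most $\varphi(\xk) + \sigma\tau\langle \wk_\mathcal{M}, \dk\rangle$ as soon as $1 - \tilde\kappa\tau/a \ge \sigma$, that is, $\tau \le (1-\sigma)a/\tilde\kappa$. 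We therefore set $t := \min\{ r/\|\dk\|, (1-\sigma)a/\tilde\kappa\}$, which gives \eqref{eq:welldef} for all $\tau \in (0,t)$.

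The only real subtlety is the degenerate case $\dk = 0$, where $r/\|\dk\|$ is undefined. In that case $\retr_{\xk}(0) = \xk$ and both sides of \eqref{eq:welldef} equal $\varphi(\xk)$, so any $t>0$ works; formally we take $t := (1-\sigma)a/\tilde\kappa$. It also needs checking that the neighborhood $V$ from Assumption~\ref{descentas} contains $x=\xk$; this holds because the assumption is applied at $\bar x = \xk$ itself. No uniformity in $k$ is needed here, since the statement concerns a single iterate.
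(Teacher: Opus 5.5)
Your proof is correct and follows the same route as the paper: apply \eqref{eq:descentpropm} at $\bar x = \xk$ with $\xi_x = \tau\dk$, absorb $\tilde\kappa\tau^2\|\dk\|^2$ into $(1-\sigma)\tau\langle -\wk_\mathcal{M},\dk\rangle$, and take $t$ as the minimum of $r/\|\dk\|$ and a constant. The one difference is that you use Assumption~\ref{genas}(a) directly to get the threshold $(1-\sigma)a/\tilde\kappa$. The paper instead passes through \eqref{eq:wdbound3} and \eqref{eq:wdbound4}, so it also uses part (b), and obtains the no larger constant $a^2(1-\sigma)/(b\tilde\kappa)$. Your version is slightly cleaner and shows that part (b) is not needed for this statement.
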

\begin{proof}
	By Assumption~\ref{descentas} there exists some $r>0$ such that \eqref{eq:descentpropm} holds, i.e.
	\begin{equation}\label{eq:descentpropproof1}
		\varphi\big(\retr_{\xk}(\dk)\big) - \big(\varphi(\xk) + \langle \wk_\mathcal{M}, \dk \rangle\big) \leq \tilde \kappa \|\dk\|^2
	\end{equation}
	for all $\dk \in \clball_r(0_{\xk}) \subset T_{\xk} \mathcal{M}$ and all $\wk_\mathcal{M} \in \partial_\mathcal{M} \varphi(\xk)$.

	For $\dk = 0$, \eqref{eq:welldef} is obviously true (for all $t>0$). Otherwise, we now show that \eqref{eq:welldef} holds for all $\tau \in (0, t)$ with $t = \min\left\{\frac{r}{\|\dk\|}, \frac{a^2(1-\sigma)}{b\tilde \kappa}\right\}$. By applying first \eqref{eq:wdbound3} and then \eqref{eq:wdbound4}, we obtain
	\begin{equation*}
		0 < \tau < t \leq \frac{a^2(1-\sigma)}{b \tilde \kappa} \leq \frac{a (1-\sigma)\|\wk_\mathcal{M}\|}{b \tilde \kappa \| \dk \|}\leq \frac{(1-\sigma) \langle - \wk_\mathcal{M}, \dk \rangle}{\tilde \kappa \|\dk\|^2}.
	\end{equation*}
	Consequently, one has for all $\tau \in (0, t)$
	\begin{equation*}
		\tilde \kappa \tau^2 \|\dk\|^2 \leq (1-\sigma) \tau \langle -\wk_\mathcal{M}, \dk \rangle 
	\end{equation*}
	and therefore in combination with \eqref{eq:descentpropproof1} with $\dk$ replaced by $\tau \dk$ it holds that
	\begin{equation*}
		- \sigma \tau \langle \wk_\mathcal{M}, \dk \rangle \leq - \tau \langle \wk_\mathcal{M}, \dk \rangle - \tilde \kappa \tau^2 \|\dk\|^2 \leq \varphi(\xk) - \varphi(\retr_{\xk} (\tau \dk)).
	\end{equation*}
	Finally, by rearranging terms, we obtain the claimed inequality for all $\tau \in (0, t)$:
	\begin{equation*}
		\varphi(\retr_{\xk}(\tau \dk)) \leq \varphi(\xk) + \sigma \tau \langle \wk_\mathcal{M}, \dk \rangle.
	\end{equation*}
	This completes the proof.
\end{proof}
We next turn to a nonmonotone version of the previous iteration. 
It is clear from \eqref{eq:welldef} and Assumption~\ref{genas} that an algorithm with the iteration $\xkp = \retr_{\xk}(\tauk \dk)$, where $\dk$ is such the conditions \eqref{eq:wdbound1} and \eqref{eq:wdbound2} hold and $\tauk$ is determined by a backtracking linesearch with termination criterion as in \eqref{eq:welldef}, will produce a monotonically decreasing sequence of function values $\{\varphi(\xk)\}_{k \in \mathbb{N}}$.

Of course, \eqref{eq:welldef} will also hold if $\varphi(\xk)$ is replaced by an arbitrary upper bound. Therefore, suppose that we have a reference value $\Rvalue_k \geq \varphi(\xk)$. Then Proposition \ref{prop:welldeflinesearch} guarantees that a backtracking linesearch with termination criterion
\begin{equation}\label{eq:linesearchtermination}
	\varphi(\retr_{\xk}(\tauk \dk)) \leq \Rvalue_k + \sigma \tauk \langle \wk_\mathcal{M}, \dk \rangle
\end{equation}
terminates after a finite number of iterations. Hence, provided $\xk$ is not already a stationary point, our linesearch method is well-defined.

Further, the condition \eqref{eq:linesearchtermination} might already be satisfied for a larger choice
of $ \tau_k $ in comparison to the monotone version. This results in possibly larger steps, which is the
reason why nonmonotone methods may outperform their monotone
counterparts in practical applications. 

In order to obtain
suitable (global) convergence results, the reference value $ \Rvalue_k $
has to be chosen in a careful way. One popular choice is due to
Grippo et al.\ \cite{GrippoLamparielloLucidi1986}, 
where $ \Rvalue_k := \max \{ \varphi(\xj) \mid j = k, k-1, \ldots, k - m_k \} $ for some given $ m_k \in \N $. We call this
strategy the \emph{max-rule} since $ \Rvalue_k $ is defined as the maximum
function value over the last few iterates, say, the last ten points. In the Euclidean setting, this choice of reference values was already studied in \cite{aragon2025}, where stationarity of accumulation points for arbitrary upper-$\mathcal{C}^2$ functions was derived.

In our Algorithm~\ref{Alg:NonmonotoneSubgradient}, however, we use
the technique introduced by Zhang and Hager \cite{ZhangHager2004}, 
where $ \Rvalue_{k+1} $
is computed as a convex combination of the previous reference value
$ \Rvalue_k $ and the new function value $ \varphi (\xkp) $. We 
therefore call this the \emph{mean-rule}. Equation~\eqref{eq:descentcondition} in the upcoming result Lemma \ref{lemma:genproperties} shows that $\Rvalue_k$ chosen by the mean-rule is indeed an upper bound for $\varphi(\xk)$ in our setting. 
In the case where $\varphi$ is a differentiable function and hence $\wk_\mathcal{M} = \grad \varphi(\xk)$,
the nonmonotone linesearch with mean-rule is well-established in manifold optimization and was
already studied in \cite{Oviedo2022} and \cite{Qian2024}. Let us note, however, that even in the differentiable case, 
the assumptions are usually more restrictive. Most notably, Assumption~\ref{descentas} is weaker than the assumption 
that $\varphi$ has a globally Lipschitz continuous gradient on an open superset of the embedded manifold $\mathcal{M}$.
Also, unlike previous results, in our case, $\mathcal{M}$ does not need to be compact. The algorithmic details are presented in Algorithm~\ref{Alg:NonmonotoneSubgradient}.

Assumption~\ref{genas} allows a wide variety of options regarding the choice
of the search direction $ \dk $. For example, let $ \wk_\mathcal{M} \in \partial_\mathcal{M} 
\varphi (\xk) $ be arbitrarily given. If $ \wk_\mathcal{M} = 0 $, then $ \xk $
is already a stationary point, and we terminate the iteration. Otherwise,
we have $ \wk_\mathcal{M} \neq 0 $, in which case the gradient-type direction
$ \dk := - \wk_\mathcal{M} $ satisfies the two conditions (a) and (b) from 
Assumption~\ref{genas} with $ a:= b:= 1 $. However, in the Euclidean setting, 
we can also take well-known quasi-Newton or limited-memory quasi-Newton directions
like $ \dk := -H_k \wk_\mathcal{M} $ for a bounded and uniformly positive definite 
sequence of (limited-memory) matrices $ \{ H_k \} $. Then both conditions
from Assumption~\ref{genas} are still satisfied. In the general setting, where $\mathcal{M}$ itself is 
not a Euclidean space, there exist some generalizations of quasi-Newton methods to Riemannian manifolds, see e.g.\ \cite{Qi2010, Huang2016}.
However, they often involve vector transports between tangent spaces, which may be computationally expensive \cite{Godaz2021}.

In the Euclidean setting, our main convergence result reduces to
essentially the same as the one from \cite{aragon2025}. However, 
as we use the mean rule as opposed to the max rule, the convergence analysis turns out to be much 
simpler.

\begin{algorithm}[Nonmonotone Subgradient Method on Manifolds]\leavevmode
	\label{Alg:NonmonotoneSubgradient}
\begin{algorithmic}[1]
	\Require $x^0 \in \mathcal{M}$, a retraction $\retr$ on $T\mathcal{M}$, $0<\tau_\mathrm{min} \leq \taumax < \infty$, $\sigma, \beta \in (0,1)$, $\pmin \in (0, 1]$.
	\State Set $\Rvalue_0 := \varphi(x^0)$.
	\For{$k = 0, 1, 2, \dots$}
	\State Choose $\wk_\mathcal{M} \in \partial_\mathcal{M} \varphi(\xk)$; \If{$\wk = 0$} \State STOP and return $\xk$.\EndIf
	\State Choose $\dk \in T_{\xk}\mathcal{M} \setminus \{0_{\xk}\}$ such that \eqref{eq:wdbound1} and \eqref{eq:wdbound2} hold.
	\State Choose $\tau_k \in [\taumin, \taumax]$.
	\While{$\varphi(\retr_{\xk}(\tauk \dk)) > \Rvalue_k + \sigma \tauk \langle \wk_\mathcal{M}, \dk \rangle$}
	\State $\tauk = \beta \tauk$
	\EndWhile
	\State Set $\xkp := \retr_{\xk}(\tauk \dk)$.
	\State Choose $p_{k+1} \in [\pmin, 1]$ and set $\Rvalue_{k+1} := (1-p_{k+1})\Rvalue_k + p_{k+1} \varphi(\xkp)$.
	\EndFor
\end{algorithmic}
\end{algorithm}

Note that the sequence $\{\xk\}$ generated by Algorithm~\ref{Alg:NonmonotoneSubgradient} satisfies $\xkp \neq \xk$ for all $k$ as otherwise, one would have $\xk = \xkp = \retr_{\xk}(\tauk \dk)$ and hence by the property that $\retr_{\xk}$ is locally invertible, we have $\tauk \dk = 0_{\xk}$ contradicting our choice of $\dk$ and $\tauk > 0$. Now, we first collect some more properties of the sequence $\{\xk\}$, which are similar to those obtained for a nonmonotone proximal gradient method in \cite{DeMarchi2023}.

\begin{lemma}\label{lemma:genproperties}
	Let Assumption~\ref{descentas} and Assumption~\ref{genas} be satisfied. Assume that $\inf_{x \in \mathcal{M}} \varphi(x) > - \infty$. Then for all $x^0 \in \mathcal{M}$, Algorithm~\ref{Alg:NonmonotoneSubgradient} either stops at some stationary point after a finite number of iterations, or the sequence $\{\xk\}_{k \in \mathbb{N}}$ satisfies the following properties:
\begin{itemize}
	\item[(a)] For all $k \in \mathbb{N}$ it holds that
		\begin{equation}\label{eq:descentcondition}
			\varphi(\xkp) + (1- p_{k+1}) \delta_k \leq \Rvalue_{k+1} \leq \Rvalue_k - p_{k+1} \delta_k,
		\end{equation}
		where 
		\begin{equation}
			\delta_k := - \sigma \tauk \langle \wk_\mathcal{M}, \dk \rangle \geq \sigma a\tauk \|\dk\|^2 \geq 0.
		\end{equation}
	\item [(b)] The sequence $\{ \Rvalue_k\}$ is monotonically decreasing.
	\item [(c)] Both $\{ \Rvalue_k \}$ and $\{ \varphi(\xk)\}$ converge to some value $\varphi^* \in \mathbb{R}$, i.e., both sequences converge and
	have the same limit.
	\item [(d)] $\tauk \dk \to 0$ for $k \to \infty$.
\end{itemize}
\end{lemma}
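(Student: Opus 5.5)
We argue by induction and use only the line search acceptance criterion \eqref{eq:linesearchtermination} together with the update rule for $\Rvalue_{k+1}$. Assume the algorithm does not stop, so $\wk_\mathcal{M}\neq 0$ and $\dk\neq 0$ for all $k$. Proposition~\ref{prop:welldeflinesearch}, applied with $\varphi(\xk)$ replaced by the upper bound $\Rvalue_k$, shows that each backtracking loop terminates, provided $\Rvalue_k\geq\varphi(\xk)$. This holds for $k=0$ by construction and will be propagated by part (a). The bound $\delta_k\geq\sigma a\tauk\|\dk\|^2\geq 0$ follows directly from \eqref{eq:wdbound1}.

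\textbf{Part (a).} The accepted step satisfies $\varphi(\xkp)\leq\Rvalue_k-\delta_k$. Then
\[
\Rvalue_{k+1}=(1-p_{k+1})\Rvalue_k+p_{k+1}\varphi(\xkp)\leq(1-p_{k+1})\Rvalue_k+p_{k+1}(\Rvalue_k-\delta_k)=\Rvalue_k-p_{k+1}\delta_k,
\]
which is the right inequality. For the left one, write $\Rvalue_k\geq\varphi(\xkp)+\delta_k$. This gives
\[
\Rvalue_{k+1}\geq(1-p_{k+1})(\varphi(\xkp)+\delta_k)+p_{k+1}\varphi(\xkp)=\varphi(\xkp)+(1-p_{k+1})\delta_k.
\]
In particular $\Rvalue_{k+1}\geq\varphi(\xkp)$, which closes the induction and keeps the line search well defined.

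\textbf{Parts (b) and (c).} Part (b) is immediate from (a) since $\delta_k\geq 0$. For (c), note that $\Rvalue_k\geq\varphi(\xk)\geq\inf_\mathcal{M}\varphi>-\infty$. So $\{\Rvalue_k\}$ is monotone and bounded below, and it converges to some $\varphi^*\in\R$. Summing the right inequality in (a) gives
\[
\pmin\sum_k\delta_k\leq\sum_k p_{k+1}\delta_k\leq\Rvalue_0-\varphi^*<\infty,
\]
hence $\delta_k\to 0$. The left inequality of (a) combined with $\Rvalue_k\geq\varphi(\xk)$ yields $\Rvalue_{k+1}-(1-p_{k+1})\delta_k\geq\varphi(\xkp)$. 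In the other direction, the update rule gives
\[
\varphi(\xkp)=\frac{\Rvalue_{k+1}-(1-p_{k+1})\Rvalue_k}{p_{k+1}}=\Rvalue_k-\frac{\Rvalue_k-\Rvalue_{k+1}}{p_{k+1}}\geq\Rvalue_k-\frac{\Rvalue_k-\Rvalue_{k+1}}{\pmin}.
\]
Since $p_{k+1}\geq\pmin>0$, both bounds tend to $\varphi^*$, so $\varphi(\xk)\to\varphi^*$. The key point here is that $\pmin$ is bounded away from zero; this is the step needing the most care.

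\textbf{Part (d).} From $\delta_k\to 0$ and $\delta_k\geq\sigma a\tauk\|\dk\|^2$ we get $\tauk\|\dk\|^2\to 0$. This alone does not give $\tauk\|\dk\|\to 0$, so we use $\tauk\leq\taumax$:
\[
\|\tauk\dk\|^2=\tauk\cdot\tauk\|\dk\|^2\leq\taumax\,\tauk\|\dk\|^2\to 0.
\]
Hence $\tauk\dk\to 0$.
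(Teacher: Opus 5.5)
Your proof is correct and follows essentially the same route as the paper. The inequalities in (a) come from the acceptance test plus the mean-rule update, (c) from monotone boundedness of $\{\Rvalue_k\}$ together with the identity $\varphi(x^{k+1}) = \Rvalue_k - (\Rvalue_k - \Rvalue_{k+1})/p_{k+1}$ and $p_{k+1} \geq \pmin$, and (d) from summability of $\tau_k\|d^k\|^2$ combined with $\tau_k \leq \taumax$.

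The differences are cosmetic. You make explicit the induction showing $\Rvalue_k \geq \varphi(x^k)$, which the paper leaves implicit even though the line search's well-definedness depends on it. Your upper bound via $\delta_k \to 0$ in (c) is redundant, since the identity alone gives $|\varphi(x^{k+1}) - \Rvalue_k| \leq (\Rvalue_k - \Rvalue_{k+1})/\pmin \to 0$.
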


\begin{proof}
The first inequality of part (a) follows from the definition of $\Rvalue_{k+1}$ and the linesearch termination criterion in \eqref{eq:linesearchtermination}:
\begin{align*}
	\Rvalue_{k+1} &= (1-p_{k+1}) \Rvalue_k + p_{k+1} \varphi(\xkp) \\
	&\geq (1-p_{k+1}) \big(\varphi(\xkp) - \sigma \tauk \langle \wk_\mathcal{M} , \dk \rangle\big) + p_{k+1} \varphi(\xkp)\\
	&= \varphi(\xkp) - (1-p_{k+1})\sigma \tauk \langle \wk_\mathcal{M}, \dk \rangle.
\end{align*}
Similarly, we verify the second inequality:
\begin{align*}
	\Rvalue_{k+1} &= (1- p_{k+1}) \Rvalue_k + p_{k+1} \varphi(\xkp) \\
	&\leq (1-p_{k+1}) \Rvalue_k + p_{k+1} ( \Rvalue_k + \sigma \tau_k \langle \wk_\mathcal{M}, \dk \rangle)\\
	&= \Rvalue_k + p_{k+1} \sigma \tauk \langle \wk_\mathcal{M}, \dk \rangle.
\end{align*}
The second assertion directly follows from part (a), and we obtain for all $k$ that
\begin{equation*}
	\varphi(\xk) \leq \Rvalue_k \leq \dots \leq \Rvalue_0 = \varphi(x^0).
\end{equation*}
The last equation also shows that as $\varphi$ is bounded from below, the sequence of reference values $\{\Rvalue_k\}$ is convergent to some limit $\varphi^*$. Now, as $p_k \geq \pmin$ and
\begin{equation*}
	\varphi(\xk) = \frac{1}{p_k} \big( \Rvalue_k - (1-p_k) \Rvalue_{k-1}
	\big) = \frac{1}{p_k} (\Rvalue_k - \Rvalue_{k-1}) + \Rvalue_{k-1}
\end{equation*}
due to the update of $ \Rvalue_k $,
the (usually nonmonotone) convergence of $\{\varphi(\xk)\}$ to the same limit $\varphi^*$ follows.

Statement (d) is now a consequence of part (a)
and the following telescoping argument:
\begin{equation}\label{eq:telescoperval}
	\infty > \Rvalue_0 - \varphi^* \geq \sum_{j=1}^k 
	\big( \Rvalue_{j-1} - \Rvalue_j \big) \geq \sum_{j=1}^k p_j \delta_{j-1} \geq \sum_{j=1}^k \pmin \sigma a \frac{\tau_{j-1}^2}{\taumax} \|d^{j-1}\|^2,
\end{equation}
where the upper bound $\Rvalue_0 - \varphi^*$ is independent of $k$ and holds for all $k \in \mathbb{N}$.
\end{proof}
Note that \eqref{eq:telescoperval} directly implies the following estimate similar to \cite[Proposition 4.4]{aragon2025}:

\begin{proposition}\label{prop:convest}
There exists a constant $c > 0$ such that
\begin{equation}
	\min_{0 \leq j \leq k} \tauk \|\dk\| \leq \frac{c\sqrt{\Rvalue_0 - \varphi^*}}{\sqrt{k+1}}, \text{ for all } k \in \mathbb{N}.
\end{equation}
\end{proposition}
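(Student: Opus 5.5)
Note first that the statement has a typo: the minimum should be over $\tau_j\|d^j\|$, not $\tauk\|\dk\|$. We prove it in that form. The plan is to read the bound off the telescoping estimate \eqref{eq:telescoperval} from the proof of Lemma~\ref{lemma:genproperties}(d), combined with the elementary fact that a minimum is at most the average.

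\textbf{Step 1: the summed bound.} Applying \eqref{eq:telescoperval} with $k+1$ in place of $k$ and shifting the index gives
\begin{equation*}
	\sum_{j=0}^{k} \tau_j^2 \|d^j\|^2 \leq \frac{\taumax}{\pmin \sigma a}\,(\Rvalue_0 - \varphi^*) \quad \text{for all } k \in \N .
\end{equation*}
Two facts justify this. First, $\Rvalue_{k+1} \geq \varphi^*$, since $\{\Rvalue_k\}$ decreases monotonically to $\varphi^*$ by Lemma~\ref{lemma:genproperties}(b),(c). Second, each summand obeys $p_{j+1}\delta_j \geq \pmin \sigma a \tau_j \|d^j\|^2 \geq \pmin \sigma a \tau_j^2\|d^j\|^2/\taumax$. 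This uses $\tau_j \leq \taumax$, which holds because backtracking only decreases the initial trial value chosen in $[\taumin,\taumax]$.

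\textbf{Step 2: minimum versus average.} The left-hand side has $k+1$ nonnegative terms, so
\begin{equation*}
	(k+1)\min_{0\leq j\leq k} \tau_j^2\|d^j\|^2 \leq \sum_{j=0}^k \tau_j^2\|d^j\|^2 .
\end{equation*}
Combining this with Step 1 and taking square roots, using that $t \mapsto \sqrt{t}$ is increasing so the minimum commutes with the root, yields the claim with
\begin{equation*}
	c := \sqrt{\frac{\taumax}{\pmin \sigma a}} .
\end{equation*}

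\textbf{Remarks.} There is no real obstacle here. The only points needing care are the index shift in \eqref{eq:telescoperval}, the bound $\tau_j \leq \taumax$ that converts $\tau_j\|d^j\|^2$ into $\tau_j^2\|d^j\|^2$, and the typo in the index of the minimum.

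If the algorithm stops finitely, the statement is understood for the indices actually generated. Alternatively, one assumes, as in Lemma~\ref{lemma:genproperties}, that an infinite sequence is produced.
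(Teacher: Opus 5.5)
Your proof is correct and matches the paper's argument: the telescoping estimate \eqref{eq:telescoperval} (with the index shift you describe) bounds $\sum_{j=0}^k \tau_j^2\|d^j\|^2$ by $\frac{\taumax}{\pmin\sigma a}(\Rvalue_0-\varphi^*)$, you bound the minimum by the average, and taking square roots gives the same constant $c=\sqrt{\taumax/(\pmin\sigma a)}$. You are also right that the minimum is meant to be over $\tau_j\|d^j\|$, and the justifications you add ($\Rvalue_{k+1}\geq\varphi^*$ and $\tau_j\leq\taumax$) are exactly the facts the paper uses without stating them.
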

\begin{proof}
By \eqref{eq:telescoperval} we obtain
\begin{equation*}
	\min_{0 \leq j \leq k} \tauk^2 \|\dk\|^2 \leq \frac{1}{k+1} \frac{\taumax}{\pmin \sigma a}\sum_{j=0}^{k} \pmin \frac{\sigma a}{\taumax} \tau_j^2 \|d^j\|^2
	\leq \frac{1}{k+1} \frac{\taumax}{\pmin \sigma a} \big(\Rvalue_0 - \varphi^* \big).
\end{equation*}
Taking the square root gives the claim with $c:= \sqrt{\frac{\taumax}{\pmin \sigma a}}$.
\end{proof}

Next, we establish subsequential convergence to stationary points for the sequence $\{\xk\}$ generated by Algorithm~\ref{Alg:NonmonotoneSubgradient}. 

\begin{theorem}\label{thm:subsequenceconvergence}
Assume that Assumptions~\ref{descentas} and \ref{genas} hold and that $\varphi$ satisfies $\inf_{x \in \mathcal{M}} \varphi(x) > - \infty$. Then, given any $x^0 \in \mathcal{M}$, either Algorithm~\ref{Alg:NonmonotoneSubgradient} stops at a stationary point after a finite number of iterations, or the following assertions hold for the generated sequence $\{\xk\}$:
\begin{enumerate}
	\item Suppose that $\{\xk\}_{k \in K}$ is a bounded subsequence of $\{\xk\}$, then the corresponding stepsizes are uniformly bounded away from zero, i.e.\ $\inf_{k \in K} \tau_k > 0$. Further, it holds that $\| \wk_\mathcal{M} \| \to_K 0$ and $\| \dk \| \to_K 0$. 
	\item Any accumulation point of $\{\xk\}_{k \in \mathbb{N}}$ is a stationary point.
	\item Let $x^*$ be an accumulation point of $\{\xk\}$. Then the entire sequence $ \{ \varphi (\xk) \} $ converges to $\varphi (x^*) $ and the sequence $ \{ \Rvalue_k \} $ converges monotonically to $ \varphi (x^*) $.
\end{enumerate}
\end{theorem}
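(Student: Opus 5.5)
The plan is to prove part 1 by contradiction on the stepsizes, then read off parts 2 and 3 from part 1 together with the robustness of $\partial_\mathcal{M}\varphi$ and Lemma~\ref{lemma:genproperties}.

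\textbf{Part 1: a uniform lower bound on the stepsizes.} Let $\{\xk\}_{k\in K}$ be bounded, and set $C:=\mathrm{cl}\{\xk\}_{k\in K}$. We first reduce to a compact setting.
\begin{itemize}
\item If $\mathcal{M}$ is not closed, $C\subseteq\mathcal{M}$ may fail. In that case I would argue on subsequences converging to some $\bar x\in\mathcal{M}$ and cover only such limit points; the argument below is local, so this suffices.
\item Cover $C$ by finitely many neighborhoods $V$ from Assumption~\ref{descentas}. This gives a uniform constant $\tilde\kappa$ and a uniform radius $r$ on $\{\xk\}_{k\in K}$.
\item Local boundedness of $\partial_\mathcal{M}\varphi$ gives $\|\wk_\mathcal{M}\|\le G$ for $k\in K$. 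By \eqref{eq:wdbound3}, $\|\dk\|\le G/a$.
\end{itemize}

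Now suppose $\tau_k\to_{K'}0$ along some $K'\subseteq K$. Then for large $k\in K'$ the initial trial step was reduced. So the trial step $\tau_k/\beta$ violates the test
\[
\varphi\big(\retr_{\xk}(\tfrac{\tau_k}{\beta}\dk)\big)>\Rvalue_k+\sigma\tfrac{\tau_k}{\beta}\langle\wk_\mathcal{M},\dk\rangle\ge\varphi(\xk)+\sigma\tfrac{\tau_k}{\beta}\langle\wk_\mathcal{M},\dk\rangle.
\]
Since $\|\dk\|$ is bounded, $(\tau_k/\beta)\|\dk\|\le r$ holds eventually. The proof of Proposition~\ref{prop:welldeflinesearch}, with uniform $\tilde\kappa$ and $r$, shows the test holds for every $\tau<\min\{r/\|\dk\|,\,a^2(1-\sigma)/(b\tilde\kappa)\}$. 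Hence
\[
\tau_k/\beta\ge\min\{r\|\dk\|^{-1},\,a^2(1-\sigma)/(b\tilde\kappa)\}\ge\min\{ra/G,\,a^2(1-\sigma)/(b\tilde\kappa)\}>0,
\]
which contradicts $\tau_k\to_{K'}0$. Therefore $\inf_{k\in K}\tau_k>0$.

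\textbf{Part 1: the remaining limits.} Lemma~\ref{lemma:genproperties}(d) gives $\tau_k\dk\to0$. With $\tau_k$ bounded below on $K$, this yields $\dk\to_K0$. Then \eqref{eq:wdbound2} gives $\wk_\mathcal{M}\to_K0$.

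\textbf{Part 2.} Let $\xk\to_K x^*$. Part 1 gives $\wk_\mathcal{M}\to_K0$. To apply the robustness property (Corollary 3.1 of Khanh2026), we need $\varphi(\xk)\to_K\varphi(x^*)$. Here I would use continuity of $\varphi$. This is available when $\varphi$ is upper-$\mathcal{C}^2$ and hence locally Lipschitz on an open superset of $\mathcal{M}$. Under Assumption~\ref{descentas} alone, I would tacitly assume $\varphi$ is continuous on $\mathcal{M}$. Robustness then gives $0\in\partial_\mathcal{M}\varphi(x^*)$.

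\textbf{Part 3.} The same continuity gives $\varphi(\xk)\to_K\varphi(x^*)$. Lemma~\ref{lemma:genproperties}(c) says that both $\{\varphi(\xk)\}$ and $\{\Rvalue_k\}$ converge to a common limit $\varphi^*$, and part (b) says $\{\Rvalue_k\}$ is monotone. Hence $\varphi^*=\varphi(x^*)$, which gives both claims.

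\textbf{Main obstacle.} The delicate step is part 1. The constants $\tilde\kappa$, $r$ and the subgradient bound must be made uniform along the bounded subsequence, which needs a compactness argument inside $\mathcal{M}$. One must also check that the rejected trial step $\tau_k/\beta$ stays within the radius $r$.
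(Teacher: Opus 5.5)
Your proof is correct and is essentially the paper's argument. You argue by contradiction on $\tau_k$ through the rejected trial step $\tau_k/\beta$, using the local descent inequality \eqref{eq:descentpropm} and $\Rvalue_k \geq \varphi(x^k)$; then Lemma~\ref{lemma:genproperties}(d) and \eqref{eq:wdbound2} give $d^k, w^k_\mathcal{M} \to_K 0$, robustness of $\partial_\mathcal{M}\varphi$ gives stationarity, and continuity of $\varphi$ gives part 3, exactly as in the paper. The only difference is cosmetic: you make $\tilde\kappa$, $r$ and $G$ uniform by a finite cover and reuse the explicit threshold from the proof of Proposition~\ref{prop:welldeflinesearch}, whereas the paper passes to a subsequence $x^k \to_K \bar x$ and derives $\sigma > 1 - \frac{\hat\kappa}{a}\hat\tau_k$ directly; as you observe, both versions tacitly need the limit points of the bounded subsequence to lie in $\mathcal{M}$.
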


\begin{proof}
The crucial part is the claim that $\inf \tau_k > 0$ on bounded subsequences. Assume, by contradiction, that $\inf_{k \in K} \tau_k = 0$, where $\{\xk\}_{k \in K}$ is bounded. By taking subsequences, we may assume that $\tau_k \to_K 0$, and by boundedness of $\{\xk\}_K$, we assume that there exists some $\bar x$ such that $\xk \to_K \bar x$.

By Lemma~\ref{lem:projsubdiff}, we have that for all $\wk_\mathcal{M}$ there exists $\wk \in \partial \varphi(\xk), k \in K$, such that $\wk_\mathcal{M} = \proj_{T_x\mathcal{M}} (\wk)$, it follows that as $\{\wk\}_{k \in K}$ is bounded, $\{\wk_\mathcal{M}\}_K$ is bounded, too. That is, by taking again a suitable subsequence, we have $\wk_\mathcal{M} \to_K \bar w_\mathcal{M}$ for some $\bar w_\mathcal{M}$. From \eqref{eq:wdbound3}, we have $\| \wk \| \geq a \| \dk \|$, which implies that $\{ \dk\}_K$ is also bounded and hence, again on a suitable subsequence, we may assume $\dk \to_K \bar d$ for some $\bar d \in \X$. As $\tau_k \to_K 0$, we assume $\tauk < \taumin$ 
for all $k \in K$ sufficiently large. Denote by $\tauhk$ the previous 
trial stepsize, i.e., $\tauhk = \beta^{-1} \tauk$ and $\xhkp := \retr_{\xk}(\tauhk \dk)$. That is, the inner loop does not terminate with the stepsize $\tauhk$, therefore
\begin{equation}\label{eq:notermcrit}
	\varphi(\xhkp) > \Rvalue_k + \sigma \tauhk \langle \wk_\mathcal{M}, \dk \rangle.
\end{equation}
Since $ \tauhk \| \dk\| \to_K 0$, it follows by continuity of the retraction that $\xhkp \to_K \bar x$. By our Assumption~\ref{descentas} there exists a constant $\hat \kappa > 0$ such that, for all $k \in K$ sufficiently 
large, we have
\begin{equation}\label{eq:charctwobarx}
	\varphi(\xhkp) \leq \varphi(\xk) + \tauhk \langle \wk_\mathcal{M}, \dk \rangle + \hat \kappa \tauhk^2 \| \dk \|^2.
\end{equation}
A combination of first \eqref{eq:notermcrit}, the fact that $\Rvalue_k \geq \varphi(\xk)$, followed by \eqref{eq:charctwobarx} and \eqref{eq:wdbound1} gives
\begin{align*}
	\sigma \tauhk \langle \wk_\mathcal{M}, \dk \rangle & < \varphi(\xhkp) - \Rvalue_k  \leq \varphi(\xhkp) - \varphi(\xk)\\
		& \leq \tauhk \langle \wk_\mathcal{M}, \dk \rangle + \hat \kappa \tauhk^2 \|\dk\|^2
		 \leq \tauhk  \left( 1 - \frac{\hat \kappa}{a} \tauhk\right) \langle \wk_\mathcal{M}, \dk \rangle.
\end{align*}
Therefore, as $\langle \wk_\mathcal{M}, \dk \rangle < 0$, it follows that $\sigma > 1- \frac{\hat \kappa}{a} \tauhk$.
As $\tauk \to_K 0$ it follows that $\tauhk \to_K 0$. Thus, the above inequality contradicts the fact that $\sigma \in (0,1)$.

The claim concerning the convergence of $\{\dk\}_K$ is now a direct consequence of Lemma~\ref{lemma:genproperties} (d) and since $\| \wk_\mathcal{M} \| \leq b \|\dk \|$ by Assumption~\ref{genas}, it also follows that $\|\wk\| \to_K 0$.

Now, we are ready to verify the second claim. Let $x^*$ be an accumulation point of $\{\xk\}_{k \in \mathbb{N}}$, hence, there exists a subsequence $\{ \xk \}_{k \in K}$ converging to $x^*$. By the first part, we have $\wk_\mathcal{M} \to_K 0$, which, by the so-called robustness property of the projectional subdifferential implies that 
$0 \in \partial_\mathcal{M} \varphi(x^*)$ as $\wk_\mathcal{M} \in \partial_\mathcal{M} \varphi (\xk)$ for all $k \in K$.

By Lemma~\ref{lemma:genproperties} we already know that both $\{\varphi(\xk)\}$ and $\{\Rvalue_k\}$ converge (in the latter case monotonically) and their limits agree. However, on a subsequence, we have $\varphi(\xk) \to_K \varphi(x^*)$ by continuity of $\varphi$. Thus, the last claim concerning the convergence of $\{\varphi(\xk)\}$ and $\{\Rvalue_k\}$ follows.
\end{proof}

\section{Convergence under the Kurdyka–Łojasiewicz Property}\label{Sec:KL}

The key idea to obtain global and rate-of-convergence results for the nonmonotone subgradient method under the KL property is to exploit that the stepsizes along bounded subsequences remain bounded. This was shown in Theorem~\ref{thm:subsequenceconvergence} and allows us to weaken the global assumptions in \cite{Qian2024} (for the differentiable case) to merely local ones in the spirit of \cite{kanzow2025}. Therefore, we restrict the discussion here to highlight the main differences to the previous works \cite{kanzow2025} and \cite{Qian2024}. We omit technical details and only discuss a sketch of proof. The full proofs can be found in the appendix.

For the remaining part, we assume that the Assumptions~\ref{descentas} and \ref{genas} hold. Further, assume that our objective function 
\begin{equation*}
	\Phi := \varphi + \delta_\mathcal{M}
\end{equation*}
satisfies the KL property at a given accumulation point $x^* \in \mathcal{M}$. Let $\eta > 0$ be the corresponding constant and $\chi$ the associated desingularization function from Definition~\ref{def:klproperty}, and denote by $\{\xk \}_{k \in K}$ a subsequence converging to $x^*$. 

Let $C \subseteq \mathcal{M}$ be a compact subset with nonempty interior relative to $\mathcal{M}$, such that $x^* \in \mathrm{int} C$ and $\varphi(x) \leq \Rvalue_0$ for all $x \in C$. Such a set always exists unless we are in the trivial case $x_0 = x^*$. By Theorem~\ref{thm:subsequenceconvergence}, there exists a constant $\taug_C > 0$ 
such that 
\begin{equation}\label{eq:taucbounded}
   \tauk \geq \taug_C \quad \text{for all } k
   \text{ with } \xk \in C.
\end{equation}
We denote the constants from Lemma~\ref{lemma:propretr} by $\alpha, \beta$ respectively and note that these can be chosen as global constants on the set $C$.

Following mostly \cite{Qian2024}, we also introduce
the subsequent notation: 
\begin{itemize}
	\item $m := \min \big\{ l \in \N \, \big| \, (1-\sqrt{1-p_{\min}}) \sqrt{l} \geq (1+\sqrt{1-p_{\min}}) \big\} $,
	\item $l(k) := k + m - 1$, 
	\item $\Xi_{k-1} := \sqrt{\Rvalue_{k-1} - \Rvalue_{k}}$ for $k \in \N $
	   and
	\item $\Delta_{i, j} := \chi \big(\Rvalue_i- \varphi(x^*) \big) - 
	   \chi \big( \Rvalue_j - \varphi(x^*) \big)$.
\end{itemize}
The index $m$ is uniquely defined as the 
left-hand side of the inequality in its definition eventually becomes larger than
the constant on the right-hand side. Moreover, 
the difference $ l(k)- k = m-1 $ is a constant number for all $ k \in \N $.
It is thus guaranteed later that certain
sums are always taken over a finite (fixed) number of terms only. 

Since $\{\Rvalue_k\}$ is a monotone sequence, we have that $\Xi_{k-1} \geq 0$ holds for all $ k \in \N $.
In combination with the monotonicity of $\chi$, it follows that $\Delta_{i,j} \geq 0$ for all $j \geq i$.

Let us also introduce the two index sets
\begin{equation*}
	K_1 := \big\{ k \in \N \, \big| \, \varphi(\xk) \leq \Rvalue_{k+m}
	\big\} \text{\ and\ } K_2 := \big\{k \in \N \, \big| \, \varphi(\xk) > \Rvalue_{k+m} \big\}
\end{equation*}
depending on the previously introduced number $ m $.

From Lemma~\ref{lemma:genproperties}, we have $\tauk \dk \to 0$ and 
\begin{equation*}
	\pmin \sigma a \tauk \|\dk\|^2 \leq \Rvalue_k - \Rvalue_{k+1}.
\end{equation*}
Assuming $x^k \in C$, we can deduce the following inequality
\begin{equation*}
	\| \xkp - \xk \|^2  \leq \alpha^2 \tauk^2 \|\dk\|^2 
		 \leq \frac{\alpha^2 \tauk}{\pmin \sigma a} (\Rvalue_k - \Rvalue_{k+1}) \leq \frac{\alpha^2 \taumax}{\pmin \sigma a} \Xi_k^2,
\end{equation*}
and hence
\begin{equation}\label{eq:normxi}
	e \|\xkp -\xk\| \leq \Xi_k,
\end{equation}
where $e$ denotes the constant $e = \frac{1}{\alpha} \sqrt{\frac{\pmin \sigma a}{\taumax}}$.

The following result is clear, as the individual summands can be made arbitrarily small (recall again that the difference $ l(k) - k = m - 1 $ is
a constant).

\begin{lemma}\label{Lem:alpha}
Define the constant $ \hat{c} := \frac{b \alpha}{\taug_C} \sqrt{\frac{\taumax \pmin}{\sigma a}}$, with $\taug_C$ as in \eqref{eq:taucbounded}. Then there
exists a sufficiently large index $k_0-1 \in K$ such that
\begin{equation}
	\vartheta := \|x^{k_0-1} - x^*\| + \frac{1}{e} \sum_{j = k_0}^{l(k_0)} (3 \Xi_{j-1} + \Xi_j) + \frac{2 \hat c}{e} \sum_{j = k_0}^{l(k_0)} \chi \big( \Rvalue_j - \varphi(x^*) \big)
\end{equation}
	satisfies $\clball_\vartheta(x^*) \subset U$, where $U$ is the neighborhood of $x^*$ from the KL property in Definition~\ref{def:klproperty} and $\clball_\vartheta(x^*) \cap \mathcal{M} \subset C$.
\end{lemma}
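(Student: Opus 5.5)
The argument is elementary: each of the three terms in $\vartheta$ tends to zero along the subsequence $k_0 - 1 \in K$, $k_0 \to \infty$. It then suffices to choose $k_0$ so large that $\vartheta$ falls below a fixed radius $\rho > 0$ depending only on $U$ and $C$.

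\textbf{Choosing the radius.} Since $U$ is a neighborhood of $x^*$, there is some $\rho_1 > 0$ with $\clball_{\rho_1}(x^*) \subset U$. Since $x^* \in \mathrm{int}\, C$ relative to $\mathcal{M}$, there is some $\rho_2 > 0$ with $\clball_{\rho_2}(x^*) \cap \mathcal{M} \subset C$. Set $\rho := \min\{\rho_1, \rho_2\}$. Shrinking a closed ball preserves both inclusions, so the lemma follows once we show $\vartheta \leq \rho$.

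\textbf{The three terms vanish along $K$.}
\begin{itemize}
\item The first term satisfies $\|x^{k_0-1} - x^*\| \to 0$ as $k_0 - 1 \to_K \infty$, because $\{\xk\}_{k\in K}$ converges to $x^*$.
\item For the second term, Lemma~\ref{lemma:genproperties}(b),(c) shows that $\{\Rvalue_k\}$ is monotonically decreasing and convergent. Hence $\Rvalue_{j-1} - \Rvalue_j \to 0$, so $\Xi_{j-1} \to 0$ and $\Xi_j \to 0$ as $j \to \infty$. The sum runs over the indices $j = k_0, \dots, l(k_0) = k_0 + m - 1$, i.e.\ over exactly $m$ terms with $m$ fixed. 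It is therefore bounded by $4m \max_{k_0-1 \leq j \leq k_0+m-1} \Xi_j$, which tends to zero.
\item For the third term, Theorem~\ref{thm:subsequenceconvergence}(3) shows that $\Rvalue_j \downarrow \varphi(x^*)$, so $\Rvalue_j - \varphi(x^*) \geq 0$ and $\Rvalue_j - \varphi(x^*) \to 0$. For $j$ large this difference lies in $[0, \eta]$, where $\chi$ is defined. By continuity of $\chi$ and $\chi(0) = 0$, each summand $\chi\big(\Rvalue_j - \varphi(x^*)\big)$ tends to zero. Again only $m$ summands appear, and the constants $\hat c$ and $1/e$ are fixed, so this term tends to zero as well.
\end{itemize}

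\textbf{Conclusion.} Pick $k_0 - 1 \in K$ large enough that each of the three terms is below $\rho/3$. Then $\vartheta \leq \rho$, which gives both inclusions.

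The only point requiring care is that $\chi$ is evaluated inside its domain $[0,\eta]$, which uses the monotone convergence $\Rvalue_j \downarrow \varphi(x^*)$. The bound $\taug_C > 0$ from \eqref{eq:taucbounded} enters only to make $\hat c$ a well-defined finite constant.
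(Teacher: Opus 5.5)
Your proposal is correct and follows the paper's own argument. The paper just remarks that the lemma is clear because $\|x^{k_0-1}-x^*\|$ and each of the fixed number $m$ of summands can be made arbitrarily small along $K$. You fill in the details the paper leaves implicit: the radius $\rho$ obtained from $U$ and from $x^*$ being relatively interior to $C$, and the check that $\chi$ is only evaluated in $[0,\eta]$.
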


We next provide an upper bound for the distance of the current point $ \xk $ to
stationarity, measured by the expression $ \dist \big( 0, \partial \Phi(\xk) \big) $, see also \cite{Qian2024}.

\begin{lemma}\label{lemma:bs}
Under the conditions specified above, we have
\begin{equation}\label{distbound}
	\dist \big( 0, \partial \Phi(\xk) \big) 
	\leq \frac{2 b}{\taug_C} \| \xkp - \xk\|,
\end{equation}
for all sufficiently large $k$ with $\xk \in \clball_\vartheta(x^*)$.
\end{lemma}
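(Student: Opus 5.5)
The plan is to bound $\dist(0,\partial\Phi(\xk))$ by the norm of the particular projectional subgradient $\wk_\mathcal{M}$ chosen in Algorithm~\ref{Alg:NonmonotoneSubgradient}. I then convert this into a bound in terms of the step $\tauk\dk$, and finally compare $\tauk\dk$ with the actual displacement $\xkp-\xk$ using the second-order retraction estimate from Lemma~\ref{lemma:propretr}.

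\textbf{Step 1.} By the characterization \eqref{eq:projsubdiffchar}, $\partial_\mathcal{M}\varphi(\xk)=\partial(\varphi+\delta_\mathcal{M})(\xk)\cap T_{\xk}\mathcal{M}\subseteq\partial\Phi(\xk)$. Since $\wk_\mathcal{M}\in\partial_\mathcal{M}\varphi(\xk)$, we get $\dist(0,\partial\Phi(\xk))\leq\|\wk_\mathcal{M}\|$. Assumption~\ref{genas}(b) then gives
\[
\dist(0,\partial\Phi(\xk))\leq b\|\dk\| = \frac{b}{\tauk}\,\|\tauk\dk\|.
\]
Now let $\xk\in\clball_\vartheta(x^*)$. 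By Lemma~\ref{Lem:alpha} we have $\clball_\vartheta(x^*)\cap\mathcal{M}\subset C$, so \eqref{eq:taucbounded} yields $\tauk\geq\taug_C$. Hence
\[
\dist(0,\partial\Phi(\xk))\leq \frac{b}{\taug_C}\,\|\tauk\dk\|.
\]

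\textbf{Step 2.} It remains to show $\|\tauk\dk\|\leq 2\|\xkp-\xk\|$ for all sufficiently large $k$. Set $\xi:=\tauk\dk\in T_{\xk}\mathcal{M}$, so that $\xkp=\retr_{\xk}(\xi)$. By Lemma~\ref{lemma:genproperties}(d), $\xi\to 0$. So for $k$ large we have $\|\xi\|\leq r$, where $r$ is the radius from Lemma~\ref{lemma:propretr} applied on the compact set $C$, and also $\|\xi\|\leq 1/(2\beta)$ if $\beta>0$. Applying \eqref{eq:retrprop2} at $\xk\in C$ gives
\[
\|\xkp-\xk\|\geq\|\xi\|-\|\retr_{\xk}(\xi)-\xk-\xi\|\geq\|\xi\|-\beta\|\xi\|^2\geq\tfrac12\|\xi\|.
\]
Combining this with Step 1 yields \eqref{distbound} with the constant $2b/\taug_C$.

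\textbf{Main obstacle.} I expect no real difficulty; the only point needing care is uniformity of the constants. The radius $r$ and the constant $\beta$ must be the global ones on $C$, so that \eqref{eq:retrprop2} applies at every $\xk\in C$. The threshold on $k$ must be chosen via $\tauk\dk\to0$, independently of where $\xk$ lies in $\clball_\vartheta(x^*)$. Both requirements are guaranteed by the choice of $C$ and by Lemma~\ref{lemma:genproperties}(d).
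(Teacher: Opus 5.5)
Your proof is correct and follows essentially the same route as the paper: you bound $\dist(0,\partial\Phi(\xk))$ by $\|\wk_\mathcal{M}\|\le b\|\dk\|$ via \eqref{eq:projsubdiffchar} and Assumption~\ref{genas}(b), use $\tauk\ge\taug_C$ because $\clball_\vartheta(x^*)\cap\mathcal{M}\subset C$, and compare $\tauk\dk$ with $\xkp-\xk$ through the retraction estimate \eqref{eq:retrprop2} together with $\tauk\dk\to 0$. The only difference is that you make the paper's $o(\|\tauk\dk\|)$ remainder quantitative by requiring $\|\tauk\dk\|\le 1/(2\beta)$, which shows explicitly that the threshold on $k$ is uniform over $\clball_\vartheta(x^*)$.
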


\begin{proof}
By Theorem~\ref{thm:subsequenceconvergence} and noting that $\clball_\vartheta(x^*) \cap \mathcal{M} \subseteq C$, we obtain that $\tauk \geq \taug_C$ for all $k$ with $\xk \in \clball_\vartheta(x^*)$. Therefore, by Lemma~\ref{lemma:propretr}, we have
\begin{equation*}
	\|\xkp - (\xk + \tauk \dk)\| = \|\retr_{\xk}(\tauk \dk) - (\xk + \tauk \dk)\| = o(\| \tauk \dk\|).
\end{equation*}
Now the claim follows by Assumption~\ref{genas} as for all $k$ large enough
\begin{align*}
	\| \xkp - \xk \| &\geq \tauk \| \dk \| - \| \retr_{\xk} (\tauk \dk) - (\xk + \tauk \dk)\|\\
		& \geq \frac{1}{2} \taug_C \| \dk \| 
		 \geq \frac{1}{2 b} \taug_C \| \wk_\mathcal{M}\| 
		 \geq \frac{1}{2b} \taug_C \dist\big(0, \partial \Phi(\xk)\big),
\end{align*}
where the last inequality is due to \eqref{eq:projsubdiffchar}.
\end{proof}

An application of the corresponding result in \cite{Qian2024} to the setting considered here, gives us the following technical result:

\begin{lemma}\label{lemma:lemmaxi}
For all sufficiently large $k \in \mathbb{N}$ with $\Rvalue_k < \varphi(x^*) + \eta$ and $\xk \in \clball_\vartheta(x^*)$, the following inequality holds:
\begin{equation}\label{eq:lemmaxi}
	\frac{1-\sqrt{1-p_{\min}}}{\sqrt{m}} \sum_{i = k}^{l(k)} \Xi_i \leq \frac{1}{2} \Xi_k + \sqrt{1-p_{\min}} \Xi_{k-1} + \hat c \Delta_{k, k+m},
\end{equation}
where $\hat c $ denotes the constant from Lemma~\ref{Lem:alpha}.
\end{lemma}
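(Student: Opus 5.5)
We follow the case distinction of \cite{Qian2024}. We split the admissible indices $k$ according to the sets $K_1$ and $K_2$. Throughout, we use three ingredients. The first is the identity $\Rvalue_k - \Rvalue_{k+m} = \sum_{i=k}^{l(k)} \Xi_i^2$. The second is the resulting Cauchy--Schwarz bound $\frac{1}{\sqrt m}\sum_{i=k}^{l(k)} \Xi_i \leq \sqrt{\Rvalue_k - \Rvalue_{k+m}}$. The third is the relation between $\varphi(\xk)$ and the reference values coming from the mean-rule update, namely $\Rvalue_k - \varphi(\xk) = (1-p_k)(\Rvalue_{k-1} - \varphi(\xk))$. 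Together with $\Rvalue_{k-1}-\Rvalue_k = \Xi_{k-1}^2$ and \eqref{eq:descentcondition}, this controls $\Rvalue_k - \varphi(\xk)$ by a multiple of $\Xi_{k-1}^2$. Since $1-\sqrt{1-\pmin}\leq 1$, it suffices in each case to bound $\sqrt{\Rvalue_k-\Rvalue_{k+m}}$, or a suitable fraction of it, by the right-hand side of \eqref{eq:lemmaxi}.

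\emph{Case $k\in K_1$}, i.e.\ $\varphi(\xk)\leq \Rvalue_{k+m}$. Here $\Rvalue_k - \Rvalue_{k+m} \leq \Rvalue_k - \varphi(\xk)$, which the mean-rule relation bounds by a multiple of $\Xi_{k-1}^2$. Taking square roots yields a bound by a multiple of $\Xi_{k-1}$ alone, with no KL argument needed. I expect the exact coefficient $\sqrt{1-\pmin}$, together with the prefactor $1-\sqrt{1-\pmin}$ on the left, to come out of the sharper bookkeeping of \cite{Qian2024}. Specifically, I would write $\Rvalue_k-\varphi(\xk) = (1-p_k)(\Rvalue_{k-1}-\Rvalue_{k+m}) - (1-p_k)(\varphi(\xk)-\Rvalue_{k+m})$ and drop the last term using $k \in K_1$. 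This gives $\Rvalue_k - \Rvalue_{k+m} \leq (1-\pmin)(\Xi_{k-1}^2 + \Rvalue_k - \Rvalue_{k+m})$. Taking square roots and using $\sqrt{u+v}\leq\sqrt u+\sqrt v$ then leaves $(1-\sqrt{1-\pmin})\sqrt{\Rvalue_k-\Rvalue_{k+m}} \leq \sqrt{1-\pmin}\,\Xi_{k-1}$, which is \eqref{eq:lemmaxi} in this case.

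\emph{Case $k\in K_2$}, i.e.\ $\Rvalue_{k+m} < \varphi(\xk)\leq \Rvalue_k < \varphi(x^*)+\eta$. Because $\Rvalue_{k+m}\geq\varphi(x^*)$ by Theorem~\ref{thm:subsequenceconvergence}, we have $\varphi(\xk)>\varphi(x^*)$. Since also $\xk\in \clball_\vartheta(x^*)\subset U$, the KL inequality \eqref{eq:klineq} for $\Phi$ applies at $\xk$. Monotonicity and concavity of $\chi$ give
\begin{equation*}
\Delta_{k,k+m} \geq \chi\big(\varphi(\xk)-\varphi(x^*)\big) - \chi\big(\Rvalue_{k+m}-\varphi(x^*)\big) \geq \chi'\big(\varphi(\xk)-\varphi(x^*)\big)\big(\varphi(\xk)-\Rvalue_{k+m}\big).
\end{equation*}
Combining this with the KL inequality, Lemma~\ref{lemma:bs}, and \eqref{eq:normxi} yields $\varphi(\xk)-\Rvalue_{k+m} \leq \frac{2b}{\taug_C e}\,\Xi_k\,\Delta_{k,k+m}$. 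Next we write $\Rvalue_k-\Rvalue_{k+m} = (\Rvalue_k-\varphi(\xk)) + (\varphi(\xk)-\Rvalue_{k+m})$. The first summand is handled exactly as in the $K_1$ case, producing the $\sqrt{1-\pmin}\,\Xi_{k-1}$ term, again via the $(1-p_k)$ relation and absorption of $\sqrt{1-\pmin}\sqrt{\Rvalue_k-\Rvalue_{k+m}}$ into the left side. For the second summand we take square roots and apply $2\sqrt{uv}\leq u+v$ in the form $\sqrt{\Xi_k\cdot C\Delta_{k,k+m}} \leq \tfrac12\Xi_k + \tfrac{C}{2}\Delta_{k,k+m}$. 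After unwinding $e$, this should produce exactly the constant $\hat c$ of Lemma~\ref{Lem:alpha}.

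I expect the main obstacle to be this constant bookkeeping, rather than the structure of the argument. The delicate points are getting precisely the coefficients $\tfrac12$, $\sqrt{1-\pmin}$ and $\hat c$ while absorbing the $(1-p_k)$-terms into the left-hand side, and checking that the hypotheses of Lemma~\ref{lemma:bs} and of the KL inequality hold for all sufficiently large $k$. If the direct absorption does not close, I would fall back on the sharper recursion $\Rvalue_k-\varphi(\xk)\leq(1-p_k)(\Rvalue_{k-1}-\varphi(\xk))$ combined with \eqref{eq:descentcondition} at index $k-1$, which is how the corresponding estimate in \cite{Qian2024} is organized.
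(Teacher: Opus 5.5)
Your Case~2 does not prove the lemma with the stated constant. Your claim that ``unwinding $e$'' yields $\hat c$ is incorrect: from the definitions of $e$ and $\hat c$ one has $\frac{2b}{\taug_C e}=\frac{2\hat c}{\pmin}$, so your KL estimate reads $\varphi(\xk)-\Rvalue_{k+m}\le\frac{2\hat c}{\pmin}\,\Xi_k\Delta_{k,k+m}$. In your split $\Rvalue_k-\Rvalue_{k+m}=(\Rvalue_k-\varphi(\xk))+(\varphi(\xk)-\Rvalue_{k+m})$ this term has weight $1$. You therefore end with
\[
(1-\sqrt{1-\pmin})\sqrt{\Rvalue_k-\Rvalue_{k+m}}\le\sqrt{1-\pmin}\,\Xi_{k-1}+\sqrt{\tfrac{2\hat c}{\pmin}\Xi_k\Delta_{k,k+m}}.
\]
For $\pmin<1$ the last term is not bounded by $\frac12\Xi_k+\hat c\Delta_{k,k+m}$: with $\Xi_k=2\hat c\Delta_{k,k+m}$ it exceeds it by the factor $1/\sqrt{\pmin}$. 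So no weighting in $2\sqrt{uv}\le u+v$ gives both the $\frac12$ and the $\hat c$; the best you get is $\hat c/\pmin$.

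The loss occurs where you bound $\Rvalue_k-\varphi(\xk)$ by $(1-\pmin)(\Rvalue_{k-1}-\Rvalue_{k+m})$ and discard $-(1-p_k)(\varphi(\xk)-\Rvalue_{k+m})$. Keeping that term is not enough either. The exact identity is $\Rvalue_k-\Rvalue_{k+m}=(1-p_k)(\Rvalue_{k-1}-\Rvalue_{k+m})+p_k(\varphi(\xk)-\Rvalue_{k+m})$, and even the weight $p_k$ is too large.

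The missing step is the reweighting used in the paper. Since $\varphi(\xk)\le\Rvalue_{k-1}$ and $p_k\ge\pmin$, shifting weight onto the larger value gives $\Rvalue_k\le\pmin\varphi(\xk)+(1-\pmin)\Rvalue_{k-1}$. Hence
\begin{align*}
\Rvalue_k-\Rvalue_{k+m} &\le \pmin\big(\varphi(\xk)-\Rvalue_{k+m}\big)+(1-\pmin)\big(\Xi_{k-1}^2+\Rvalue_k-\Rvalue_{k+m}\big)\\
&\le 2\hat c\,\Xi_k\Delta_{k,k+m}+(1-\pmin)\big(\Xi_{k-1}^2+\Rvalue_k-\Rvalue_{k+m}\big).
\end{align*}
The factor $\pmin$ cancels the $1/\pmin$, and then $\sqrt{2\hat c\,\Xi_k\Delta_{k,k+m}}\le\frac12\Xi_k+\hat c\Delta_{k,k+m}$ gives exactly the claim. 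Your fallback concerns the first summand and does not repair this. The weaker constant $\hat c/\pmin$ would still support Theorem~\ref{thm:convergence} if $\vartheta$ were redefined, but it is not the lemma as stated.

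There is also a smaller sign slip in Case~1. For $k\in K_1$ the term $-(1-p_k)(\varphi(\xk)-\Rvalue_{k+m})$ is nonnegative, so dropping it from $\Rvalue_k-\varphi(\xk)$ gives a lower bound, not an upper bound. The inequality $\Rvalue_k-\Rvalue_{k+m}\le(1-\pmin)(\Xi_{k-1}^2+\Rvalue_k-\Rvalue_{k+m})$ that you reach is nevertheless true. It follows directly from $\Rvalue_k=(1-p_k)\Rvalue_{k-1}+p_k\varphi(\xk)\le(1-p_k)\Rvalue_{k-1}+p_k\Rvalue_{k+m}$, as in the paper. Your coarser route also works here: $\Rvalue_k-\varphi(\xk)=\frac{1-p_k}{p_k}\Xi_{k-1}^2$, and $(1-\sqrt{1-\pmin})\sqrt{(1-\pmin)/\pmin}\le\sqrt{1-\pmin}$.
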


\begin{proof}
As $x \mapsto \sqrt{x}$ is a concave function, the application of Jensen's inequality yields
\begin{equation}\label{eq:inequality}
   \frac{1-\sqrt{1-p_{\min}}}{\sqrt{m}} \sum_{i = k}^{l(k)} \Xi_i \leq \big( 1-\sqrt{1-\pmin} \big) \sqrt{\Rvalue_k - \Rvalue_{k+m}}.
\end{equation}
We now distinguish two cases. The case $k \in K_1$ follows directly by calculation as in reference \cite{Qian2024}. In the second case $k \in K_2$, the key difference to the proximal case in \cite{kanzow2025} is that from Lemma~\ref{lemma:bs} together with the KL property, 
one obtains the bound
\begin{equation*}
	\chi'\big(\varphi(\xk) - \varphi(x^*)\big) \geq \frac{1}{\frac{2 b}{\taug_C} \| \xkp - \xk \|}.
\end{equation*}
By properties of the desingularization function $\chi$, one arrives at the estimate
\begin{equation*}
	\varphi(\xk) - \Rvalue_{k+m} \leq \frac{2 \hat c}{\pmin} \Xi_k \Delta_{k, k+m},
\end{equation*}
from which the claim can be deduced. The complete proof is in the appendix.
\end{proof}

Now, we obtain the global convergence of $\{\xk\}$ to a stationary point under the KL property of $\varphi$.

\begin{theorem}\label{thm:convergence}
Let Assumptions~\ref{descentas} and \ref{genas} hold, let $\{\xk\}_K$ be a subsequence converging to some accumulation point $x^*$, and suppose that $\Phi:= \varphi + \delta_\mathcal{M}$ satisfies the KL property at $x^*$. Then the entire sequence $\{\xk\}$ converges to $x^*$.
\end{theorem}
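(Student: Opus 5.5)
We follow the standard KL finite-length strategy in the nonmonotone form of \cite{Qian2024}, using the local set $C$ and the stepsize bound \eqref{eq:taucbounded}. The goal is to show $\sum_k \|\xkp - \xk\| < \infty$. By \eqref{eq:normxi} it suffices to show $\sum_k \Xi_k < \infty$, because this makes $\{\xk\}$ a Cauchy sequence. Its limit must then be $x^*$, since $x^*$ is an accumulation point. First we dispose of the trivial case. If $\Rvalue_k = \varphi(x^*)$ for some $k$, then $\{\Rvalue_k\}$ is eventually constant by monotonicity and Theorem~\ref{thm:subsequenceconvergence}(c). In that case $\Xi_k = 0$ eventually, so \eqref{eq:normxi} gives $\xkp = \xk$, which contradicts $\xkp \neq \xk$. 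Hence $\Rvalue_k > \varphi(x^*)$ for all $k$. Also $\Rvalue_k \downarrow \varphi(x^*)$, so $\Rvalue_k < \varphi(x^*) + \eta$ for all large $k$, and $\Xi_k \to 0$.

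The core step is an induction showing that $\xk \in \clball_\vartheta(x^*)$ for all $k \geq k_0 - 1$, with $k_0$ and $\vartheta$ from Lemma~\ref{Lem:alpha}. We also choose $k_0$ large enough that Lemmas~\ref{lemma:bs} and \ref{lemma:lemmaxi} apply from $k_0$ on. Suppose $x^{k_0-1}, \dots, x^{N}$ all lie in the ball. Then Lemma~\ref{lemma:lemmaxi} holds for each $k = k_0, \dots, N$, and we sum it over $k$. On the left, every $\Xi_i$ with $k_0 \leq i \leq N$ appears up to $m$ times, so the left side is at least a fixed multiple of $\sum_{i=k_0}^{N}\Xi_i$, up to boundary terms. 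On the right we get $\tfrac12\sum \Xi_k + \sqrt{1-\pmin}\sum \Xi_{k-1}$ plus the telescoping-type sum $\hat c\sum_{k=k_0}^N \Delta_{k,k+m}$. Since $\Delta_{k,k+m} = \sum_{j=k}^{k+m-1}\Delta_{j,j+1}$, that sum is bounded by $\hat c \sum_{j=k_0}^{k_0+m-1}\chi(\Rvalue_j - \varphi(x^*))$. The choice of $m$, namely $(1-\sqrt{1-\pmin})\sqrt m \geq 1+\sqrt{1-\pmin}$, makes the left coefficient beat $\tfrac12 + \sqrt{1-\pmin}$ with a margin. After rearranging, this should give
\[
\sum_{i=k_0}^{N} \Xi_i \leq \sum_{j=k_0}^{l(k_0)} (3\Xi_{j-1} + \Xi_j) + 2\hat c \sum_{j=k_0}^{l(k_0)} \chi\big(\Rvalue_j - \varphi(x^*)\big) - \Xi_N\text{-type terms}.
\]
Combining this with \eqref{eq:normxi} and the triangle inequality gives $\|x^{N+1} - x^*\| \leq \|x^{k_0-1}-x^*\| + \frac1e\sum_{i=k_0-1}^{N}\Xi_i \leq \vartheta$, which closes the induction. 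The weights $3$ and $1$ and the factor $2$ in $\vartheta$ are exactly what this bookkeeping is designed to produce.

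With all iterates in the ball from $k_0-1$ on, the same estimate holds for every $N$. Letting $N\to\infty$ gives $\sum \Xi_i < \infty$, hence finite length and convergence of $\{\xk\}$ to $x^*$. The main obstacle is the summation step. It requires carefully counting how the window sums $\sum_{i=k}^{l(k)}$ overlap, and absorbing the $\tfrac12\Xi_k$ and $\sqrt{1-\pmin}\,\Xi_{k-1}$ terms into the left side without losing the induction hypothesis at the last indices. In particular, Lemma~\ref{lemma:lemmaxi} involves $\Xi_i$ up to $i = l(k) = k+m-1$, which lies beyond $N$. I would first try to sum only over $k \leq N-m+1$, or equivalently bound the tail indices directly by $\Xi_j \leq \sqrt{\Rvalue_0 - \varphi^*}$-small terms. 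The appendix-style argument of \cite{Qian2024,kanzow2025} handles this by treating the windows as fixed-length sums, and I would adapt it verbatim.
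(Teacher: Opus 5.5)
Your outline follows the paper's argument. It keeps $x^k\in\clball_\vartheta(x^*)$ for $k\ge k_0-1$ by induction, sums Lemma~\ref{lemma:lemmaxi} over $k=k_0,\dots,N$, telescopes the $\Delta_{k,k+m}$, and absorbs the right-hand $\Xi$'s via the choice of $m$. What is missing is the step you yourself call the main obstacle, which you leave as ``this should give''. The difficulty you anticipate there does not actually arise.

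Lemma~\ref{lemma:lemmaxi} at index $k$ only needs $x^k\in\clball_\vartheta(x^*)$, because it uses Lemma~\ref{lemma:bs} and \eqref{eq:normxi} at $x^k$ alone. The terms $\Xi_i$ with $N<i\le l(N)$ sit on the smaller side of the summed inequality and are nonnegative, so you simply drop them.

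The count you need is not ``up to $m$ times''. Each $\Xi_i$ with $l(k_0)\le i\le N$ occurs exactly $m$ times in $\sum_{k=k_0}^N\sum_{i=k}^{l(k)}\Xi_i$, so the left side is at least $(1-\sqrt{1-\pmin})\sqrt m\sum_{i=l(k_0)}^N\Xi_i$. On the right, split $\sum_{k=k_0}^N(\tfrac12\Xi_k+\sqrt{1-\pmin}\,\Xi_{k-1})$ at $k=l(k_0)$. The part with $k>l(k_0)$ is at most $(\tfrac12+\sqrt{1-\pmin})\sum_{i=l(k_0)}^N\Xi_i$. Moving it to the left, the choice of $m$ leaves a coefficient of at least $\tfrac12$. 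Using $\sqrt{1-\pmin}\le 1$, this gives
\[
\sum_{i=l(k_0)}^N\Xi_i\le\sum_{j=k_0}^{l(k_0)}\big(\Xi_j+2\,\Xi_{j-1}\big)+2\hat c\sum_{j=k_0}^{l(k_0)}\chi\big(\Rvalue_j-\varphi(x^*)\big).
\]
Adding the first $m$ terms $\Xi_{k_0-1}+\dots+\Xi_{l(k_0)-1}=\sum_{j=k_0}^{l(k_0)}\Xi_{j-1}$ yields exactly the bound on $\sum_{i=k_0-1}^N\Xi_i$ that is built into $\vartheta$. No negative correction terms appear. For $N<l(k_0)$, the bound $\sum_{i=k_0-1}^N\Xi_i\le\sum_{j=k_0}^{l(k_0)}\Xi_{j-1}$ is trivial. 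Note that your displayed intermediate estimate starts at $i=k_0$. To reach $\|x^{N+1}-x^*\|\le\vartheta$ you need the sum starting at $k_0-1$ with the same right-hand side.

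Your fallback would not close the induction as stated. Summing only over $k\le N-m+1$ leaves $\Xi_{N-m+2},\dots,\Xi_N$ controlled only with a reduced coefficient, and they do not appear on the right. The resulting bound on $\sum_{i=k_0-1}^N\Xi_i$ is therefore larger than the one encoded in $\vartheta$. Bounding these tail terms separately, by something like $\sqrt{\Rvalue_{k_0}-\varphi(x^*)}$, adds a quantity that is not part of the fixed $\vartheta$ of Lemma~\ref{Lem:alpha}. (The quantity $\sqrt{\Rvalue_0-\varphi^*}$ is not small in any case.)

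Finally, your preliminary case distinction is unnecessary, and as written it applies \eqref{eq:normxi} without knowing $x^k\in C$. In fact $\{\Rvalue_k\}$ is strictly decreasing by Lemma~\ref{lemma:genproperties}(a), since $d^k\neq 0$. Moreover, Lemma~\ref{lemma:lemmaxi} never needs $\Rvalue_k>\varphi(x^*)$.
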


\begin{proof}
Again, we refer to the appendix for the complete proof. Let $k_0$ be the index from the definition of $\vartheta$, cf.\ Lemma~\ref{Lem:alpha}.
Without loss of generality, we may assume that $k_0$ is large enough, such that the results from Lemma~\ref{lemma:bs} and Lemma~\ref{lemma:lemmaxi} hold and that $\Rvalue_{k_0} < \varphi(x^*) + \eta$. Then, the following two statements hold:
\begin{enumerate}[label=(\alph*)]
	\item for all $k \geq k_0-1$: $\xk \in \clball_\vartheta(x^*)$, and
	\item for all $k \geq l(k_0)$:
	\begin{equation}\label{eq:statementb}
	\big( 1-\sqrt{1-\pmin} \big) 
		\sqrt{m} \sum_{j = l(k_0)}^k \Xi_j \leq \sum_{j=k_0}^k \left(\frac{1}{2}\Xi_j + \sqrt{1-\pmin} \Xi_{j-1}\right) + \hat c \sum_{j = k_0}^{l(k_0)} \chi \big( \Rvalue_j - \varphi(x^*) \big),
	\end{equation}
\end{enumerate}
where $ \hat{c} $ denotes the constant from Lemma~\ref{Lem:alpha}.

Both statements can be verified jointly by induction over $k$. Note that, in particular, due to the first statement, Lemma~\ref{lemma:lemmaxi} can be used in the induction step for the second claim. The calculations can be done similar to those in the references mentioned above. In the end, one obtains 
	\begin{equation*}
		\|\xkp - x^* \| \leq \|x^{k_0 - 1} - x^* \| + \sum_{j = k_0 - 1}^k \| \xjp - \xj \| \leq \vartheta.
	\end{equation*}
	Hence, taking $k \to \infty$, shows that $\{\xk\}$ is a Cauchy sequence and thus convergent.
\end{proof}

Lastly, let us cite the following rate-of-convergence result from \cite{Qian2024} for the case where the desingularization function is given by $\chi(t) = c t^\theta$ for some $c > 0$ and $\theta \in (0, 1)$. For completeness, we include a proof in the appendix.

\begin{theorem}\label{thm:convergencerate}
Let Assumptions~\ref{descentas} and \ref{genas} hold, and suppose that $\{ \xk \}$ converges on some subsequence $\{ \xk\}_K$ to a limit point $x^*$ such that $\varphi$ has the KL property at $x^*$. Then the entire sequence $\{ \xk \}$ converges to $x^*$. Further, if the corresponding desingularization function is given by $\chi(t) = ct^{\theta}$ (for some $c>0$ and $\theta \in (0,1)$), then the following statements hold:
\begin{enumerate}[label=(\alph*)]
	\item If $\theta \in [1/2, 1)$, then $\{\Rvalue_k\}$ converges R-linearly to $\varphi(x^*)$ and $\{\xk\}$ converges R-linearly to $x^*$. \label{itm1}
	\item If $\theta \in (0, 1/2)$, then there exist constants $\eta_1, \eta_2 > 0$ such that for all $k$ large enough it holds that\label{itm2}
	\begin{align}
		\Rvalue_k - \varphi(x^*) &\leq \eta_1 k^{-\frac{1}{1-2\theta}},\\
		\|\xk - x^*\| &\leq \eta_2 k^{-\frac{\theta}{1-2\theta}}.
	\end{align}
\end{enumerate}
\end{theorem}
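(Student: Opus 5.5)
Convergence of the whole sequence is exactly Theorem~\ref{thm:convergence}, so only the rates need work. The plan is the standard Attouch--Bolte route, carried out on the reference values $\Rvalue_k$ rather than on $\varphi(\xk)$, since only $\{\Rvalue_k\}$ is monotone. Set $r_k := \Rvalue_k - \varphi(x^*) \geq 0$; this is legitimate because $\Rvalue_k \searrow \varphi(x^*)$ by Theorem~\ref{thm:subsequenceconvergence}. Since $\xk \to x^*$, for all large $k$ we have $\xk \in \clball_\vartheta(x^*)$ and $\Rvalue_k < \varphi(x^*)+\eta$, so Lemmas~\ref{lemma:bs} and \ref{lemma:lemmaxi} apply for every large $k$. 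If $r_k = 0$ for some $k$, then $\Rvalue$ is eventually constant. The telescoping bound $\pmin\sigma a \tauk\|\dk\|^2 \leq \Rvalue_k-\Rvalue_{k+1}$ then forces $\dk = 0$, which is impossible. Hence we may assume $r_k>0$.

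\textbf{Step 1: a recursion for $r_k$.} I would show that there is a constant $C>0$ with
\begin{equation*}
r_{k+m}^{2(1-\theta)} \leq C\,(r_{k-1} - r_{k+m})
\end{equation*}
for all large $k$. The split is by the index sets $K_1$ and $K_2$. If $k\in K_2$, combine the KL inequality at $\xk$ (note $\varphi(\xk) > \Rvalue_{k+m} \geq \varphi(x^*)$) with Lemma~\ref{lemma:bs} and \eqref{eq:normxi}. This gives $\chi'(\varphi(\xk)-\varphi(x^*))\,\Xi_k \gtrsim 1$, i.e.\ $(\varphi(\xk)-\varphi(x^*))^{1-\theta} \lesssim \Xi_k$. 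Since $\varphi(\xk) - \varphi(x^*) > r_{k+m}$, we get $r_{k+m}^{1-\theta} \lesssim \Xi_k$, and squaring yields $r_{k+m}^{2(1-\theta)} \lesssim r_k - r_{k+1}$. If $k \in K_1$, then $\varphi(\xk)\le \Rvalue_{k+m}$. Using \eqref{eq:descentcondition} written as $\Rvalue_k = (1-p_k)\Rvalue_{k-1}+p_k\varphi(\xk)$, I obtain $\Rvalue_k - \Rvalue_{k+m} \le (1-\pmin)(\Rvalue_{k-1}-\Rvalue_{k+m})$. This says the drop from $k-1$ to $k+m$ is at least a fixed fraction of $\Rvalue_{k-1}-\Rvalue_{k+m}$. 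The difficulty is that this is a relative statement, and relating it to $r_{k+m}$ is the main obstacle. My first attempt would be to iterate over blocks of length $m+1$ and use that within each block some index must lie in $K_2$ unless $r$ contracts geometrically. This mirrors the case analysis in \cite{Qian2024}, whose computations I would follow closely.

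\textbf{Step 2: rates for $r_k$.} Pass to the subsequence $s_j := r_{j(m+1)}$. Because the recursion has bounded index shift, it gives $s_{j+1}^{2(1-\theta)} \le C(s_j - s_{j+1})$. For $\theta\ge 1/2$ we have $2(1-\theta)\le 1$, and $s_j\to 0$, so $s_{j+1}\lesssim s_j - s_{j+1}$. This gives Q-linear decay of $s_j$, and by monotonicity of $r_k$ R-linear decay of $r_k$. For $\theta<1/2$, the standard comparison argument applies: compare $s_{j+1}^{-(1-2\theta)} - s_j^{-(1-2\theta)}$ with a positive constant, distinguishing whether $s_{j+1}\ge s_j/2$ or not. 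This yields $s_j \lesssim j^{-1/(1-2\theta)}$, which transfers to $r_k$ by monotonicity.

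\textbf{Step 3: rates for $\xk$.} Bound $\|\xk-x^*\|\le \sum_{j\ge k}\|x^{j+1}-x^j\| \le \frac1e\sum_{j\ge k}\Xi_j$. Summing the inequality of Lemma~\ref{lemma:lemmaxi} from $k$ to $\infty$, as in statement (b) of the proof of Theorem~\ref{thm:convergence}, absorbs the $\Xi$ terms on the right; this uses $(1-\sqrt{1-\pmin})\sqrt m \ge 1+\sqrt{1-\pmin}$, which holds by the choice of $m$. The result is $\sum_{j\ge k}\Xi_j \lesssim \Xi_{k-1}+\sum_{i=k}^{l(k)}\chi(r_i) \lesssim \sqrt{r_{k-1}} + r_k^{\theta}$. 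For $\theta\ge1/2$ and $r_k\le 1$ the dominant term is $\sqrt{r_{k-1}}$, giving R-linear convergence of $\xk$. For $\theta<1/2$ the term $r_k^\theta \lesssim k^{-\theta/(1-2\theta)}$ dominates, which is the claimed bound.
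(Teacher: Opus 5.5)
There is a genuine gap in Step~1, in the case $k\in K_1$, which is exactly the point you flag as the main obstacle. Membership $k\in K_1$ only gives $\Rvalue_{k-1}-\Rvalue_k\ge \pmin(\Rvalue_{k-1}-\Rvalue_{k+m})$. This compares finitely many increments of $\{\Rvalue_j\}$ following $k$, but says nothing about the size of $r_{k+m}=\sum_{i>k+m}(\Rvalue_{i-1}-\Rvalue_i)$, which is the whole tail. A block of $K_1$-indices is therefore compatible with increments that are tiny relative to that tail, i.e.\ with no contraction of $r$ at all. So your dichotomy (``some index of each block lies in $K_2$ unless $r$ contracts geometrically'') does not follow from the $K_1$ conditions. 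Note also that in this case you never use the KL inequality, which is the only source of a rate. As written, the recursion $r_{k+m}^{2(1-\theta)}\le C(r_{k-1}-r_{k+m})$ is not established, and Steps~2 and~3 rest on it.

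The missing idea is that the $K_1/K_2$ split is not needed for the rates; it is needed only in Lemma~\ref{lemma:lemmaxi} for the finite-length argument. The KL inequality at $\xk$, together with Lemma~\ref{lemma:bs} and \eqref{eq:normxi}, bounds $\varphi(\xk)-\varphi(x^*)$ rather than $r_{k+m}$. It does so for \emph{every} large $k$, trivially when $\varphi(\xk)\le\varphi(x^*)$:
\[
\varphi(\xk)-\varphi(x^*)\le \omega^{-\frac{1}{2(1-\theta)}}\,(r_k-r_{k+1})^{\frac{1}{2(1-\theta)}},\qquad \omega:=\Big(\frac{e\,\taug_C}{2c\theta b}\Big)^2 .
\]
The mean-rule update then transfers this bound to $r$. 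Since $\varphi(\xkp)\le\Rvalue_k$ and $p_{k+1}\ge\pmin$, one has $r_{k+1}\le(1-\pmin)r_k+\pmin\big(\varphi(\xkp)-\varphi(x^*)\big)$. Inserting the bound above at index $k+1$ and using monotonicity of $\{r_k\}$ gives
\[
r_k\le \frac{1}{\pmin}(r_k-r_{k+2})+\omega^{-\frac{1}{2(1-\theta)}}(r_k-r_{k+2})^{\frac{1}{2(1-\theta)}} .
\]
Hence $r_k^{\max\{1,2(1-\theta)\}}\le\gamma(r_k-r_{k+2})$ for all large $k$, and Lemma~\ref{Lem:Aragon} applied along even and odd indices gives the rates for $\{\Rvalue_k\}$. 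With this in place of your Step~1, your Steps~2 and~3 go through, Step~2 with index shift $2$ instead of $m+1$. One minor point in Step~3: summing \eqref{eq:lemmaxi} from $k$ onward leaves all the terms $\Xi_{k-1},\dots,\Xi_{l(k)}$ on the right, not only $\Xi_{k-1}$. Since each is at most $\sqrt{r_{k-1}}$, your bound $\sqrt{r_{k-1}}+r_k^{\theta}$ still holds, with an extra factor $m$.
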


\section{Numerical Experiments and Applications}\label{Sec:Numerics}

The nonmonotone subgradient method comes in different flavors. The simplest choice of the descent direction is the negative of the subgradient, i.e.\ $\dk = -\wk$. Then, as noted before, if suitable positive definiteness conditions on the corresponding matrices are ensured, the limited memory BFGS search direction from \cite{liu1989} also satisfies our condition in Assumption~\ref{genas}. Lastly, problem specific descent direction can be designed (as for the Euclidean minimum sum-of-squares clustering problem in \cite{aragon2025}).

Both, the choice of the stepsize strategy and the initial stepsize also have a significant influence on the numerical performance. For comparison, we will include the monotone version. We test the nonmonotone linesearch with parameters chosen manually ($p_k=0.6$ and $m_k=5$ for all $k \in \N$ for the mean- and max-rule respectively) in combination with the following initial stepsizes: For quasi-Newton methods, a constant initial guess, i.e.\ $\tauk = 1$ works well. If we take the negative of the subgradient, we also test a stepsize inspired by the Barzilai-Borwein stepsize \cite{Barzilai1988}, that is:
\begin{equation}\label{eq:bbstepsize}
	{\bar \tau}_k := \frac{\langle \Delta \xk, \Delta \xk\rangle}{\langle \Delta \xk, \Delta g^k\rangle},
\end{equation}
with $\Delta \xk = \xk - \xkm$ and $\Delta g^k = \wk - \proj_{T_{\xk}\mathcal{M}} (\wkm)$ (note that in a more general setting, we would need to employ parallel transport of $\wkm$ to the tangent space $T_{\xk} \mathcal{M}$). These stepsizes ${\bar \tau}_k$ are then projected onto the interval $[\taumin, \taumax]$. There are also so-called self-adaptive strategies, which were proposed in \cite{aragon2025}, where the nonmonotonicity parameter for the nonmonotone linesearch (i.e.\ the value $p_k$ for our method or the number of function values over which we take the maximum for the max-rule) and the initial stepsize is chosen automatically. Essentially, if for the two last iterations the initial stepsize was accepted, it is increased and we aim for a monotone decrease of the objective in the next iteration. Conversely, if the initial stepsize is not accepted, we may decrease the initial stepsize and allow for more nonmonotone behavior (for details, we refer to \cite{aragon2025}).

Defining
\begin{equation*}
	\Delta^k_{\text{rel}} x := \frac{\|\xk - \xkm\|}{\max\{\|\xkm\|, 1\}} \text{ \quad and \quad} \Delta^k_{\text{rel}} \varphi:= \frac{|\varphi(\xk) - \varphi(\xkm)|}{\max\{|\varphi(\xkm)|, 1\}},
\end{equation*}
for Euclidean problems, the stopping criterion is $\max \{ \Delta^k_{\text{rel}} x, \Delta^k_{\text{rel}} \varphi \} \leq \varepsilon$ and without linear structure, we use $\Delta^k_{\text{rel}} \varphi \leq \varepsilon$ with a tolerance parameter $\varepsilon > 0$.

\subsection{Euclidean Setting}

In the following, we present two applications of our method that arise in data science and machine learning: First, we test our method on the minimum sum-of-squares clustering problem. As a second application, we also show its applicability to multidimensional scaling problems. Both problems also admit a decomposition as a difference of convex functions. Therefore, it seems reasonable to compare the performance of our method to suitable algorithms from DC-programming. In particular, we consider the classical difference of convex functions algorithm (DCA) from \cite{LeThi1996} and the boosted difference of convex function algorithm (BDCA) introduced in \cite{aragon2020}, which features an additional linesearch that helps improve convergence speed. In both methods, we solved the subproblems using the LBFGS algorithm.

Let us note that both test problems can be applied to similar datasets, where the data features a partition into a set of clusters. In the case of the minimum sum-of-squares clustering this is clear and the goal is to identify the centers of these clusters. On the other hand, the multidimensional scaling technique is usually employed to inspect the data and decide whether the data can be grouped into meaningful clusters. We test the methods on both artificial and real data:
\begin{enumerate}
	\item \emph{Artificial Data:} The minimum sum-of-squares clustering is tested on data which is generated as follows: First, $l$ cluster centers are randomly generated from a normal distribution with a relatively high standard deviation. Secondly, the individual data points are again drawn from a normal distribution now centered around the cluster centers we obtained in the first step. The multidimensional scaling problem is tested on random normal data.
	\item \emph{Real Data:} We include experiments on the following datasets from the University of California Irvine Machine Learning Repository \cite{ucimlr}: \url{https://archive.ics.uci.edu/dataset/53/iris}, \url{https://archive.ics.uci.edu/dataset/186/wine+quality}, \url{https://archive.ics.uci.edu/dataset/545/rice+cammeo+and+osmancik}.
\end{enumerate}

\subsubsection{Minimum Sum-of-Squares Clustering}

Let $Y:= \{y^1, \dots, y^n\} \subset \R^s$ be a set of data points. The task is to split $Y$ into $l$ clusters given by centroids $\{x^1, \dots, x^l\}$, i.e.\ we aim to solve the problem
\begin{equation}\label{eq:minsos}
	\min \varphi(X) := \frac{1}{n} \sum_{j = 1}^n \min_{t \in \{1, \dots, l\}} \|x^t - y^j\|^2.
\end{equation}

Note that this problem was already studied in great detail in \cite{aragon2025} and previously in \cite{aragon2025a}, where the authors derive formulas for an element of the (Clarke-) subdifferential and also proposed a search direction based on second-order information from a local approximation to the objective function $\varphi$. In \cite{aragon2025}, the nonmonotone subgradient method with the max-rule was shown to outperform several other optimization methods. In particular, they were also competitive in runtime compared to the k-means clustering algorithm. Therefore, the main objective of our numerical study here is to demonstrate that the mean-rule nonmonotone method performs equally well in this application.

We compute the elements $\wk \in \partial_C \varphi$ as in \cite{aragon2025} as
\begin{equation*}
	\wk = \frac{1}{n} \sum_{j=1}^n \wk_j,
\end{equation*}
where $\wk_j \in \R^{s \times l}$ is zero except for one entry $2(\xk_t - y_j)$ at the position $t \in \arg\min_{t = 1, \dots, l} \| x_t - y_j \|$. The descent directions can be taken as in \cite{aragon2025}.

Let us also note that from \cite{ordin2015}, $\varphi$ admits the following DC formulation as $\varphi(X) = g(X) - h(X)$, 
where 
\begin{equation*}
	g(X) := \frac{1}{n} \sum_{i=1}^n \sum_{j=1}^l \| x_j - y_i\|^2 + \frac{\rho}{2} \|X\|^2,
\end{equation*}
and
\begin{equation*}
	h(X) := \frac{1}{n} \sum_{i=1}^n \max_{j=1, \dots, l} \sum_{t=1, t\neq j}^l \| x_t - y_i\|^2 + \frac{\rho}{2} \|X\|^2.
\end{equation*}
In both cases, $\|X\|^2$ denotes the Frobenius norm of $X$ and for $\rho > 0$ both functions are strongly convex. Note also that $g$ is differentiable and the subgradients of $h$ can be computed as in \cite{ordin2015}. 

\begin{figure}
    \centering
		\resizebox{0.48\textwidth}{!}{
			\includegraphics{"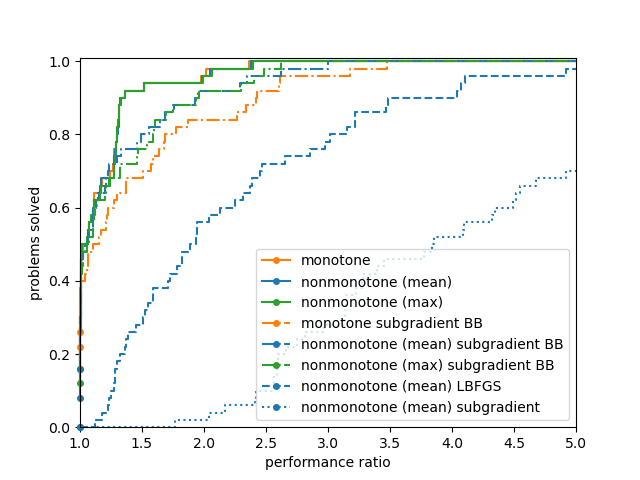"}
		}
	\caption{Performance profiles of CPU times for the Minimum Sum-of-Squares Clustering problem. The first $3$ methods use the descent direction computed in \cite{aragon2025}, $\varepsilon = 10^{-4}$.}\label{fig:msosperfprof}
	\label{fig:msosval}
\end{figure}

\begin{table}[ht]
\centering
\begin{tabular}{llrr}
\toprule
Dataset & Algorithm & Runtime (s) & Function Value\\
\midrule
\multirow{4}{*}{iris}
& nonmonotone (mean) & 0.116 & 0.420 \\
& nonmonotone (mean) LBFGS & 0.142 & 0.420 \\
& nonmonotone (mean) subgradient BB & 0.066 & 0.515 \\
& nonmonotone (mean) subgradient & 0.215 & 0.420 \\\hline
\multirow{4}{*}{wines}
& nonmonotone (mean) & 4.316 & 1322.897 \\
& nonmonotone (mean) LBFGS & 2.483 & 1322.894 \\
& nonmonotone (mean) subgradient BB & 1.955 & 1322.892 \\
& nonmonotone (mean) subgradient & 3.165 & 1322.897 \\\hline
\multirow{4}{*}{rice}
& nonmonotone (mean) & 1.208 & 1864620.633 \\
& nonmonotone (mean) LBFGS & 0.790 & 1864620.169 \\
& nonmonotone (mean) subgradient BB & 0.826 & 1864620.528 \\
& nonmonotone (mean) subgradient & 1.023 & 1864620.668 \\
\bottomrule
\end{tabular}
\caption{Numerical results for the minimum sum-of-squares clustering on real datasets (averaged over $N=5$ runs with random initialization, $\varepsilon = 10^{-4}$).}\label{tab:msosrealdata}
\end{table}

In Figure~\ref{fig:msosval}, we show performance profiles for the different methods applied to the minimum sum-of-squares clustering. Let us first note that both DCA as well as BDCA were also tested but are not included in the results as their results were significantly worse.

The methods with the descent direction from \cite{aragon2025} performed better than those using only first order information from either the subgradient directly or the LBFGS quasi-Newton direction. We note that the method using the inverse subgradient together with the stepsize from \eqref{eq:bbstepsize} performs almost as good as the choice of descent direction from \cite{aragon2025}. Further, in that case, both nonmonotone linesearches provide an additional speedup. In particular, both nonmonotone linesearches yield comparable results. This could also be observed for the other methods, i.e.\ the combination with LBFGS and the inverse subgradient with constant initial trial stepsize of $1$. Both, however, have longer runtimes in our experiment. Table~\ref{tab:msosrealdata} shows additional experiments using real datasets. Note that the initialization plays an important role for clustering algorithms. In our experiments, we choose the initial data centroids as random points from the datasets.

\subsubsection{Multidimensional Scaling}

Denote by $D := (\delta_{ij})_{i,j}$ a given dissimilarity matrix such that the entry $\delta_{ij}$ at the position $(i,j)$ measures the distance between two given data points $y_i \in \R^d$ and $y_j\in \R^d$, with $d$ large ($i, j \in \{1, \dots, n\}$). We aim to find points $\{x_1, \dots, x_n\}$ in a lower dimensional space (often in $\R^2$ or $\R^3$) such that the dissimilarities between the original data points $\{y_1, \dots, y_n\}$ correspond to the respective Euclidean distances in the lower dimensional space. Denoting $d_{ij}(X):= \| x_i -x_j\|$, the multidimensional scaling problem is given by 
\begin{equation}\label{eq:mds}
	\min_{X \in \R^{d \times n}} \varphi(X) := \sum_{i<j} (d_{ij}(X) - \delta_{ij})^2, 
\end{equation}
where $w_{ij}$ are some weight parameters.

Again, the formulation of the multidimensional scaling problem in \eqref{eq:mds} can be written as a DC program $\varphi(X) = g(X) - h(X)$,
where 
\begin{equation*}
	g(X) := \frac{1}{2} \sum_{i < j} w_{ij} d_{ij}^2(X) + \frac{\rho}{2} \|X\|^2,
\end{equation*}
and
\begin{equation*}
	h(X) := \sum_{i < j} w_{ij} d_{ij}(X) + \frac{\rho}{2} \|X\|^2.
\end{equation*}
Note that $g$ is differentiable and the subgradients of $h$ can be directly computed.

\begin{figure}
    \centering
		\resizebox{0.48\textwidth}{!}{
			\includegraphics{"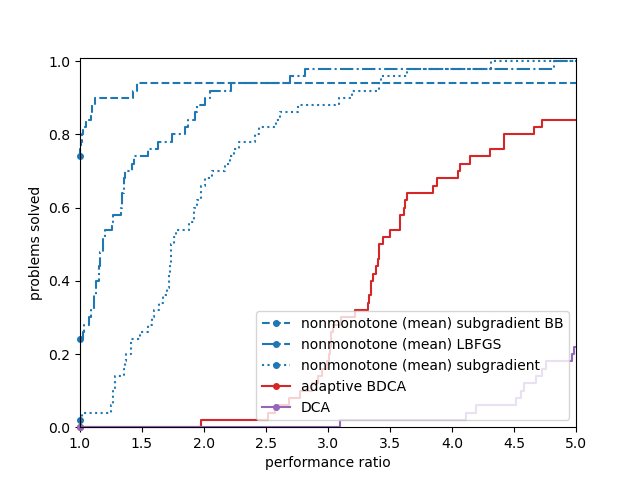"}
		}
	\caption{Performance profiles of CPU times for the Multidimensional Scaling problem, $\varepsilon = 10^{-4}$.}\label{fig:mdsperfprof}
	\label{fig:mdsfval}
\end{figure}

\begin{table}[ht]
\centering
\begin{tabular}{lrr}
\toprule
Method & Runtime (s)& Function Value \\
\midrule
Nonmonotone (mean) subgradient BB & 21.495 & -50942.952 \\
Nonmonotone (mean) LBFGS & 30.386 & -50961.759 \\
Nonmonotone (mean) subgradient & 52.662 & -50969.116 \\
Adaptive BDCA & 112.953 & -50963.378 \\
DCA & 312.823 & -50968.881 \\
\bottomrule
\end{tabular}
\caption{Numerical results for the multidimensional scaling problem on the iris dataset (averaged over $N=5$ runs with random initialization, $\varepsilon = 10^{-4}$).}\label{tab:mdsrealdata}
\end{table}

In Figure~\ref{fig:mdsfval}, the performance profiles show that in this case, the method with the stepsize from \eqref{eq:bbstepsize} together with our nonmonotone linesearch method seems to perform best, compared to the method in combination with the LBFGS descent direction and the inverse subgradient method with constant initial trial stepsize. Further, both DCA and BDCA have significantly longer runtimes. In Table~\ref{tab:mdsrealdata}, all algorithms are tested on the iris dataset.

\subsection{Clustering on Manifolds}

We will now propose an extension of the aforementioned minimum-sum-of-squares clustering to data on manifolds. Again, let $Y:= \{y^1, \dots, y^n\} \subset \mathcal{M}$ be a given set of data points on some manifold $\mathcal{M}$. The task is to split $Y$ into $l$ clusters given by centroids $\{x^1, \dots, x^l\} \subset \mathcal{M}$ such that the following optimization problem is solved:
\begin{equation}\label{eq:minsod}
	\min \varphi(X) := \frac{1}{n} \sum_{j = 1}^n \min_{t \in \{1, \dots, l\}} d(x^t, y^j).
\end{equation}
Here, $d(x, y)$ measures the dissimilarity between two points $x, y \in \mathcal{M}$ (but $d$ does not need to be a distance metric). In order to fit into our framework, we assume $d$ to be smooth (i.e.\ $\mathcal{C}^2$). In the Euclidean setting, $d(x, y) := \|x-y\|^2$ is the case considered in \eqref{eq:minsos}, and then this smoothness assumption holds true. 

Many of the clustering problems on manifolds have been separately analysed in a more statistical context. Our method is different as we take a geometric approach and cluster the data only based on their similarity and make no assumptions on e.g.\ the distribution.

We initialize the method by randomly sampling the initial estimates for the centroids as data points from the dataset. As the method might get stuck in local minima, at least for datasets of reasonable size, a good clustering can be obtained by comparing the results from several runs of the algorithm with different initializations.

Apart from the value of the objective function \eqref{eq:minsod}, we include the following evaluation metrics: homogeneity (a cluster is homogeneous if its members belong to a single class of the ground truth), completeness (members of a given class in the ground truth are assigned to the same cluster) and the V-measure (the harmonic mean of homogeneity and completeness) each take values between $0$ and $1$ \cite{Rosenberg2007}. The adjusted Rand index (measures cluster similarities through pairwise comparisons) takes values between $-0.5$ and $1$ \cite{Rand1971, Hubert1985}. Higher values indicate a better clustering.

Elements of the projectional subdifferentials are hard to compute. Thus, we take $\wk_\mathcal{M}$ from the upper bound $\wk_\mathcal{M} \in \proj_{T_{\xk} \mathcal{M}} \varphi(\xk)$ and assume that the sum-rule of the limiting subdifferential holds for the sum in problem \eqref{eq:minsod}. We take the initial stepsizes as in \eqref{eq:bbstepsize} and the termination criterion as before.

Finally, let us note that implementations of retractions and projections onto tangent spaces for many manifolds are available e.g.\ python in the package Pymanopt \cite{Townsend2016}.

\subsubsection{Spherical Clustering}

In the case of spherical data, a common measure for the similarity between two vectors $x$ and $y$ is given by the so-called \emph{cosine similarity} measure. This is simply the cosine of their angles, which corresponds to the inner product $\langle x, y\rangle$ as $x$ and $y$ have unit length. Our dissimilarity measure is hence defined as
\begin{equation}\label{eq:cosdissim}
	d_{\cos}(x, y) = 1- \langle x, y\rangle.
\end{equation}
Data on the hypersphere arises frequently in data science if the data vectors are scaled to have unit length, which is the case if e.g.\ frequencies are counted or one is dealing with directional data. The formulation in \eqref{eq:cosdissim}, together with \eqref{eq:minsod}, was already used in \cite{Dhillon2001} to cluster unlabeled document data based on word counts. On the other hand, directional data also arises naturally in many natural and physical sciences \cite{Golzy2016}.

Let us note that the cosine dissimilarity is directly related to the Euclidean distance of two vectors of unit length. In that case, we have $\|x-y\|^2 = 2 d_{\cos}(x, y)$.

On the sphere $\mathbb{S}^{n-1}$, by \cite{Absil2008}, we have the retraction $\retr_x (\xi) := \frac{x+\xi}{\|x+\xi\|}$,
and the projection onto tangent spaces are given by $\proj_{T_x\mathbb{S}^{n-1}} \xi = (I- xx^T) \xi$ for all $x \in \mathbb{S}^{n-1}$ and $\xi \in T_x \mathbb{S}^{n-1}$.

We show that our method can be applied to spherical clustering by comparing its results to those obtained from the sperical k-means clustering algorithm in \cite{Dhillon2001}. We consider two test scenarios:
\begin{enumerate}
	\item \textit{Artificial data: } We generate $k$ uniformly distributed random data points on the sphere as mean directions of von Mises-Fisher distributed data clusters with some concentration parameter $\kappa$. Each cluster is of the same size $l$.
	\item \textit{Real data: } We use the 20 newsgroups text dataset (available online e.g.\ here: \url{https://archive.ics.uci.edu/dataset/113/twenty+newsgroups}) of which we randomly select 5 categories in each run. The text data is then preprocessed by removing English stop words, vectorizing the data, counting word occurrences in each document and converting them to their so-called tf-idf measure. Subsequently, we use a singular value decomposition to reduce the dimension to $d=100$ and normalize each resulting vector. Hence, all data vectors lie on a high dimensional unit sphere.
\end{enumerate}

\begin{figure}
    \centering
		\resizebox{0.48\textwidth}{!}{
			\includegraphics{"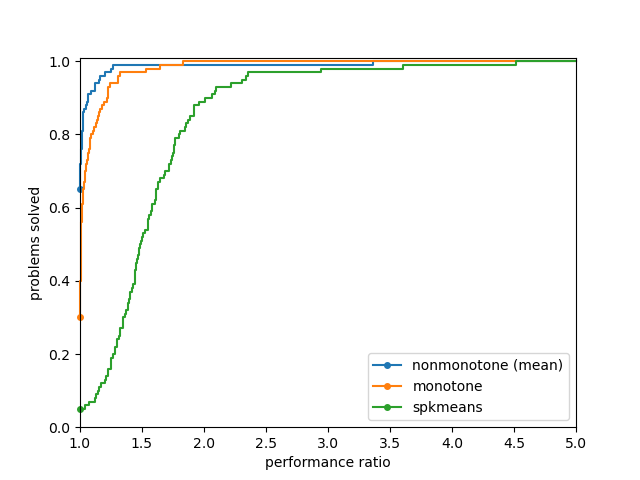"}
		}
	\caption{Performance profiles of CPU times for clustering on the sphere with artificial data ($100$ samples).}
	\label{fig:sphereruntimes}
\end{figure}

\begin{table}[ht]
\centering
\begin{tabular}{lccc}
\toprule
 & Monotone & Nonmonotone & Spherical k-means\\
\midrule
Runtime (s) & $0.3546 \pm 0.1069$ & $0.3480 \pm 0.1279$ & $0.5188 \pm 0.1824$ \\
Function Value & $0.840 \pm 0.002$ & $0.840 \pm 0.002$ & $0.840 \pm 0.002$ \\
Homogeneity & $0.499 \pm 0.027$ & $0.501 \pm 0.022$ & $0.498 \pm 0.027$ \\
Completeness & $0.499 \pm 0.027$ & $0.501 \pm 0.022$ & $0.498 \pm 0.027$ \\
V-measure & $0.499 \pm 0.027$ & $0.501 \pm 0.022$ & $0.498 \pm 0.027$ \\
Adjusted Rand-Index & $0.539 \pm 0.035$ & $0.542 \pm 0.028$ & $0.538 \pm 0.035$ \\
\bottomrule
\end{tabular}
	\caption{Results for spherical clustering on artificial data (averaged over $100$ samples, with standard deviations, $\varepsilon = 10^{-6}$).}
\label{tab:vmfclustering}
\end{table}

For an artificial dataset with $5$ clusters, $1000$ data points per cluster on the $\mathbb{S}^{200}$ and with a concentration parameter $\kappa = 10$, our numerical results are summarized in Table~\ref{tab:vmfclustering} with a performance profile for the runtime depicted in Figure~\ref{fig:sphereruntimes}. In this case, the solutions found by our method and the spherical k-means algorithm are of the same quality. However, our method is faster with the nonmonotone version providing an additional but only very slight advantage. In our experiments, we found that for large values of $\kappa$, i.e.\ more concentrated data, the spherical k-means algorithm performs better, whereas for small concentration parameters, our method is superior.

\begin{figure}
    \centering
		\resizebox{0.48\textwidth}{!}{
			\includegraphics{"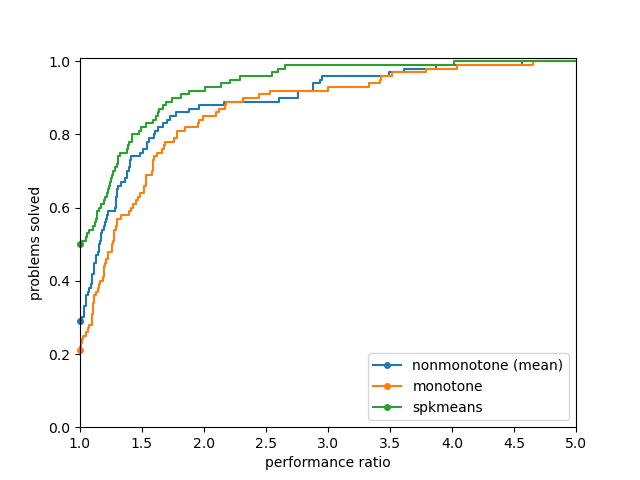"}
		}
	\caption{Performance profiles of CPU times for document clustering on the sphere ($100$ samples).}
	\label{fig:docruntimes}
\end{figure}

\begin{table}[ht]
\centering
\begin{tabular}{lccc}
\toprule
 & Monotone & Nonmonotone & Spherical k-means\\
\midrule
Runtime (s) & $0.266 \pm 0.135$ & $0.243 \pm 0.110$ & $0.224 \pm 0.107$ \\
Function Value & $0.628 \pm 0.010$ & $0.628 \pm 0.010$ & $0.626 \pm 0.009$ \\
Homogeneity & $0.491 \pm 0.073$ & $0.488 \pm 0.076$ & $0.506 \pm 0.072$ \\
Completeness & $0.509 \pm 0.070$ & $0.507 \pm 0.074$ & $0.522 \pm 0.071$ \\
V-measure & $0.500 \pm 0.071$ & $0.497 \pm 0.074$ & $0.514 \pm 0.071$ \\
Adjusted Rand-Index & $0.471 \pm 0.089$ & $0.467 \pm 0.093$ & $0.490 \pm 0.087$ \\
\bottomrule
\end{tabular}
	\caption{Results for spherical clustering on document clustering data (averaged over $100$ samples, with standard deviations, $\varepsilon = 10^{-6}$).}
\label{tab:docclustering}
\end{table}

Results for the real dataset for document clustering are reported in Figure~\ref{fig:docruntimes} and Table~\ref{tab:docclustering}. Our method has a slightly worse performance compared to the spherical k-means algorithm. 

\subsubsection{Clustering on Matrix Manifolds}

The approach for directional data, i.e.\ data on the hypersphere $\mathbb{S}^{n-1}$ can be directly extended to the so-called Stiefel manifold
\begin{equation*}
	\mathrm{St}(p, n) := \{ X \in \R^{n \times p} \mid X^T X = I_p\}, 
\end{equation*}
which inherits the scalar product $\langle X, Y \rangle = \tr( X^T Y)$ on its tangent space from $\R^{n \times p}$. Projections onto the tangent space are given by (see \cite{Absil2008}):
\begin{equation*}
	\proj_{T_X \mathrm{St}(p, n)} (\xi) := (I-XX^T) \xi + \frac{1}{2} X (X^T \xi - \xi^T X).
\end{equation*}
There exist different retractions, for example one might chose 
\begin{equation*}
	\retr_X(\xi) := (X+ \xi)(I_p - \xi^T \xi)^{-1/2},
\end{equation*}
or alternatively a QR decomposition of $X+ \xi$ can be used (see \cite{Absil2008} for details). Clustering problems on the Stiefel manifold have been considered, e.g.\ in \cite{Cetingul2009, Sengupta2017}. As suggested in \cite{Chikuse2003, Cetingul2009}, we take the following dissimilarity measure
\begin{equation*}
	d(X, Y) := p - \mathrm{tr}(X^T Y)
\end{equation*}
in \eqref{eq:minsod}.

The Grassmann manifold $\mathrm{Gr}(p, n)$ is the set of all $p$ dimensional subspaces of $\R^n$. We refer to \cite{Bendokat2024} for an overview of the different representations and only mention two approaches: First, we can identify every subspace $\mathcal{U} \subseteq \R^n$ with its associated orthogonal projection matrix $P \in \R^{n \times n}$. With this representation, the Grassmannian can be written as the set of all $P \in \R^{n \times n}$ such that $P^T = P$, $P^2 = P$ and $\mathrm{rank}P = p$. On the other hand, the Grassmannian can also be described as the quotient manifold $\mathrm{St}(p, n)/\mathrm{O}(p)$ (where $\mathrm{O}(p)$ is the orthogonal group). These equivalent formulations are linked by the fact that the orthogonal projector onto a subset $\mathcal{U} \subseteq \R^n$ spanned by the columns of $U \in \mathrm{St}(p, n)$ is given by $P=UU^T$.

Clustering on the Grassmann manifold was considered in \cite{Gruber2006, Cetingul2009} and \cite{Oh2019}, the latter provides an application to the clustering of multivariate times series. Following \cite{Chikuse2003, Cetingul2009, Oh2019}, we can measure the dissimilarity of two points $P_1 = UU^T$ and $P_2 = VV^T$, where $U, V \in \mathrm{St}(p, n)$, on the Grassmannian with
\begin{equation*}
	d(P_1, P_2) = p - \mathrm{tr}(UU^TVV^T) = p - \mathrm{tr}(P_1 P_2).
\end{equation*}

Random matrices on these two matrix manifolds are generated with the method described in \cite{Chikuse2003} and \cite{Cetingul2009}: Let $O \in \R^{n \times n}$ be an orthogonal matrix. Then $O$ can be represented as a product of (orthogonal) matrices $O^k_n(\theta)$ that take the form
\begin{equation*}
	O^k_n(\theta) := \begin{pmatrix} I_{k-1} & 0 &0 &0\\ 0  & \cos(\theta) & - \sin(\theta) & 0\\ 0 & \sin(\theta) & \cos(\theta) & 0\\ 0 & 0 &0 & I_{n-k-1}\end{pmatrix},
\end{equation*}
as $O = \prod_{k = 1}^{n-1} \prod_{j = k}^{n-1} O_n^j(\theta_{k, j})$, where $\theta_{k,j}$ are a set of angles. By selecting the first $p$ columns, we obtain an element $X \in \mathrm{St}(p,n)$ and $P := XX^T \in \mathrm{Gr}(p,n)$. For each cluster, such a set of angles is chosen and then (for the individual data points) we add a uniform random noise on the interval $[-\pi/9, \pi/9]$. 

\begin{table}[ht]
\centering
\begin{tabular}{lcc}
\toprule
 & Monotone & Nonmonotone \\
\midrule
Runtime (s) & $0.66 \pm 0.44$ & $0.54 \pm 0.32$ \\
Function Value & $0.847 \pm 0.288$ & $0.837 \pm 0.290$ \\
Homogeneity & $0.882 \pm 0.108$ & $0.885 \pm 0.109$ \\
Completeness & $0.940 \pm 0.054$ & $0.942 \pm 0.055$ \\
V-measure & $0.909 \pm 0.084$ & $0.911 \pm 0.084$ \\
Adjusted Rand-Index & $0.814 \pm 0.164$ & $0.820 \pm 0.166$ \\
\bottomrule
\end{tabular}
	\caption{Clustering results for clustering data on the Stiefel manifold (averaged over $100$ examples, with standard deviations, $\varepsilon = 10^{-6}$).}
\label{tab:stiefelclustering}
\end{table}

\begin{table}[ht]
\centering
\begin{tabular}{lcc}
\toprule
 & Monotone & Nonmonotone \\
\midrule
Runtime (s) & $0.85 \pm 0.46$ & $0.81 \pm 0.40$ \\
Function Value & $0.542 \pm 0.091$ & $0.538 \pm 0.087$ \\
Homogeneity & $0.862 \pm 0.133$ & $0.869 \pm 0.130$ \\
Completeness & $0.911 \pm 0.081$ & $0.917 \pm 0.080$ \\
V-measure & $0.885 \pm 0.109$ & $0.891 \pm 0.107$ \\
Adjusted Rand-Index & $0.807 \pm 0.189$ & $0.817 \pm 0.186$ \\
\bottomrule
\end{tabular}
	\caption{Clustering results for clustering on the Grassmann manifold (averaged over $100$ examples, with standard deviations, $\varepsilon = 10^{-6}$).}
\label{tab:grassmannclustering}
\end{table}

Results are shown Table~\ref{tab:stiefelclustering} for data on $\mathrm{St}(5, 10)$, with $5$ clusters, each of $100$ data points and Table~\ref{tab:grassmannclustering} for clustering on the Grassmann manifold $\mathrm{Gr}(5, 10)$. In both cases, the V-measure and adjusted Rand-index are close to $1$, indicating that the clustering obtained by our method successfully reconstructs the structure of the artificial dataset. Further, we observe that the nonmonotone linesearch performs at least as good as its monotone counterpart.

\section{Final Remarks}\label{Sec:Final}

The motivation behind our work was mainly twofold: First, we noticed that by employing the mean-rule as nonmonotone linesearch procedures in our algorithm, the analysis turns out to be much simpler compared to the related max-rule used in \cite{aragon2025}. By adapting the corresponding convergence theory for nonmonotone descent methods, we were further able to provide stronger theoretical guarantees in presence of the Kurdyka–Łojasiewicz property. Secondly, in the presence of specific constraints, namely if we consider minimization problems over submanifolds, these findings were further generalized through techniques from manifolds optimization. 

It remains an open question whether other related methods, e.g.\ a projected (nonmonotone) subgradient method, can be realized under the same assumptions on the objective function (i.e.\ upper-$\mathcal{C}^2$) and with similar convergence properties.

\printbibliography

\appendix

\section{Complete Proofs under the KL Property}

Let us begin with a full proof of Lemma~\ref{lemma:lemmaxi}:

\begin{lemma}\label{lemma:lemmaxiproof}
For all sufficiently large $k \in \mathbb{N}$ with $\Rvalue_k < \varphi(x^*) + \eta$ and $\xk \in \clball_\vartheta(x^*)$, the following inequality holds:
\begin{equation}\label{eq:lemmaxi}
	\frac{1-\sqrt{1-p_{\min}}}{\sqrt{m}} \sum_{i = k}^{l(k)} \Xi_i \leq \frac{1}{2} \Xi_k + \sqrt{1-p_{\min}} \Xi_{k-1} + \hat c \Delta_{k, k+m},
\end{equation}
where $\hat c $ denotes the constant from Lemma~\ref{Lem:alpha}.
\end{lemma}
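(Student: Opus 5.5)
The plan is to start from the Jensen-type bound \eqref{eq:inequality}, namely
\[
\frac{1-\sqrt{1-\pmin}}{\sqrt{m}} \sum_{i=k}^{l(k)} \Xi_i \leq \big(1-\sqrt{1-\pmin}\big)\sqrt{A_k}, \qquad A_k := \Rvalue_k - \Rvalue_{k+m} = \sum_{i=k}^{l(k)} \Xi_i^2 .
\]
It then suffices to bound $(1-\sqrt{1-\pmin})\sqrt{A_k}$ by the right-hand side of \eqref{eq:lemmaxi}. The basic identity is the mean-rule update
\[
\Rvalue_k = (1-p_k)\Rvalue_{k-1} + p_k \varphi(\xk).
\]
It gives
\[
A_k = (1-p_k)\big(\Rvalue_{k-1}-\Rvalue_{k+m}\big) + p_k\big(\varphi(\xk)-\Rvalue_{k+m}\big),
\]
and we have $\Rvalue_{k-1}-\Rvalue_{k+m} = \Xi_{k-1}^2 + A_k$. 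I split into the cases $k\in K_1$ and $k\in K_2$.

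\emph{Case $k\in K_1$.} Here the last term is $\leq 0$, so $A_k \leq (1-\pmin)(\Xi_{k-1}^2 + A_k)$. Taking square roots and using $\sqrt{s+t}\leq\sqrt s+\sqrt t$ yields $\sqrt{A_k}\leq \sqrt{1-\pmin}\,\Xi_{k-1} + \sqrt{1-\pmin}\sqrt{A_k}$. This is exactly $(1-\sqrt{1-\pmin})\sqrt{A_k}\leq \sqrt{1-\pmin}\,\Xi_{k-1}$, which is stronger than needed.

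\emph{Case $k\in K_2$.} Since $\{\Rvalue_j\}$ decreases monotonically to $\varphi(x^*)$ (Theorem~\ref{thm:subsequenceconvergence}), we get $\varphi(x^*)\leq \Rvalue_{k+m} < \varphi(\xk)\leq \Rvalue_k < \varphi(x^*)+\eta$. Together with $\xk\in\clball_\vartheta(x^*)\subset U$, the KL inequality applies at $\xk$. Combining it with Lemma~\ref{lemma:bs} and \eqref{eq:normxi} gives
\[
\chi'\big(\varphi(\xk)-\varphi(x^*)\big) \geq \frac{\taug_C}{2b\|\xkp-\xk\|}\geq \frac{\taug_C e}{2b\,\Xi_k}.
\]
Concavity of $\chi$ then yields $\chi(\varphi(\xk)-\varphi(x^*)) - \chi(\Rvalue_{k+m}-\varphi(x^*)) \geq \chi'(\varphi(\xk)-\varphi(x^*))(\varphi(\xk)-\Rvalue_{k+m})$. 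Monotonicity of $\chi$ and $\varphi(\xk)\leq\Rvalue_k$ bound the left side by $\Delta_{k,k+m}$. Since $\frac{2b}{\taug_C e} = \frac{2\hat c}{\pmin}$, this gives $\varphi(\xk)-\Rvalue_{k+m}\leq \frac{2\hat c}{\pmin}\Xi_k\Delta_{k,k+m}$. Inserting this into the identity for $A_k$ gives
\[
A_k\leq (1-\pmin)(\Xi_{k-1}^2+A_k) + 2\hat c\,\Xi_k\Delta_{k,k+m}\cdot\tfrac{p_k}{\pmin}.
\]
Then I take square roots, use subadditivity again, and apply $\sqrt{st}\leq \tfrac12 s + \tfrac12 t$ with $s=\Xi_k$ to get the terms $\frac12\Xi_k$ and a multiple of $\Delta_{k,k+m}$.

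The main obstacle is the constant bookkeeping in this last step: naive AM--GM produces $\hat c\,p_k/\pmin$ in front of $\Delta_{k,k+m}$ rather than exactly $\hat c$. I would first try to avoid the loss by not bounding $1-p_k$ by $1-\pmin$ prematurely. That is, I would rewrite $p_kA_k \leq (1-p_k)\Xi_{k-1}^2 + p_k(\varphi(\xk)-\Rvalue_{k+m})$ so that the factor $p_k$ cancels. If an exact match still fails, I would redefine $\hat c$ (a harmless constant, as in \cite{Qian2024}) so that the stated inequality holds, since only its qualitative form is used in Theorem~\ref{thm:convergence}.
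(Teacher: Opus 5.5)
Your Jensen step, the split into $K_1$ and $K_2$, Case~1, and the KL-plus-concavity argument leading to $\varphi(\xk)-\Rvalue_{k+m}\le\frac{2\hat c}{\pmin}\Xi_k\Delta_{k,k+m}$ all coincide with the paper's proof. The gap is the final constant bookkeeping in Case~2, which you leave unresolved. Bounding $1-p_k\le 1-\pmin$ while keeping $p_k$ on the other term gives $\hat c\,p_k/\pmin$ in front of $\Delta_{k,k+m}$. Your proposed repair does not fix this by itself. Set $A_k:=\Rvalue_k-\Rvalue_{k+m}$ and divide the identity $p_kA_k=(1-p_k)\Xi_{k-1}^2+p_k(\varphi(\xk)-\Rvalue_{k+m})$ by $p_k$. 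This removes $p_k$, but it leaves the term $\frac{2\hat c}{\pmin}\Xi_k\Delta_{k,k+m}$ and the factor $\frac{1-p_k}{p_k}$ on $\Xi_{k-1}^2$. A plain square root followed by AM--GM then still misses the target constants. Your fallback of enlarging $\hat c$ would be harmless for Theorem~\ref{thm:convergence}, but it proves a different statement, because $\hat c$ is fixed in Lemma~\ref{Lem:alpha}.

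The missing idea, used in the paper, is that $\varphi(\xk)\le\Rvalue_{k-1}$ by the acceptance test at iteration $k-1$. Together with $p_k\ge\pmin$, this lets you shift both convex weights to $\pmin$ at once:
\[
\Rvalue_k=p_k\varphi(\xk)+(1-p_k)\Rvalue_{k-1}\le \pmin\varphi(\xk)+(1-\pmin)\Rvalue_{k-1}.
\]
The difference of the two sides is $(p_k-\pmin)(\varphi(\xk)-\Rvalue_{k-1})\le 0$. Now the coefficient of $\varphi(\xk)-\Rvalue_{k+m}$ is exactly $\pmin$, which cancels the $1/\pmin$ and gives
\[
A_k\le 2\hat c\,\Xi_k\Delta_{k,k+m}+(1-\pmin)(\Xi_{k-1}^2+A_k).
\]
Your steps of subadditivity of the square root, absorption of $\sqrt{1-\pmin}\sqrt{A_k}$, and $\sqrt{2\hat c\,\Xi_k\Delta_{k,k+m}}\le\frac12\Xi_k+\hat c\,\Delta_{k,k+m}$ then yield the claim exactly.

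Alternatively, you can close your own division route with the elementary inequality $1-\sqrt{1-\pmin}\le\sqrt{\pmin}$, which is equivalent to $\sqrt{\pmin}+\sqrt{1-\pmin}\ge 1$. Start from $A_k\le\frac{1-\pmin}{\pmin}\Xi_{k-1}^2+\frac{2\hat c}{\pmin}\Xi_k\Delta_{k,k+m}$. Multiplying $\sqrt{A_k}$ by $1-\sqrt{1-\pmin}$ absorbs both factors $1/\sqrt{\pmin}$, and you obtain the same bound with the stated $\hat c$.
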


\begin{proof}
As $x \mapsto \sqrt{x}$ is a concave function, the application of Jensen's inequality yields
\begin{equation}\label{eq:inequality}
   \frac{1-\sqrt{1-p_{\min}}}{\sqrt{m}} \sum_{i = k}^{l(k)} \Xi_i \leq \big( 1-\sqrt{1-\pmin} \big) \sqrt{\Rvalue_k - \Rvalue_{k+m}}.
\end{equation}
We now distinguish two cases.\smallskip

\noindent
\textbf{Case 1:} $k \in K_1$. 
Then, we have $ \varphi (\xk) \leq \Rvalue_{k+m} $, which implies
\begin{align*}
   \Rvalue_k - \Rvalue_{k+m} &= (1-p_k) \Rvalue_{k-1} + p_k \varphi(\xk) - \Rvalue_{k+m}\\
   & \leq (1-p_k) \Rvalue_{k-1} + p_k \Rvalue_{k+m} - \Rvalue_{k+m}\\
   & = (1-p_k) (\Rvalue_{k-1} - \Rvalue_{k+m})\\
	&\leq (1-\pmin) (\Rvalue_{k-1} - \Rvalue_{k+m})
	\qquad (\text{recall that } \Rvalue_{k-1} \geq \Rvalue_{k+m})\\
	&= (1-\pmin) (\Rvalue_{k-1} - \Rvalue_k + \Rvalue_k - \Rvalue_{k+m}).
\end{align*}
Using $\sqrt{x+y} \leq \sqrt{x} + \sqrt{y}$ for all $x, y \in \mathbb{R}_{\geq 0}$, we obtain
\begin{equation*}
   \big( 1- \sqrt{1- \pmin} \big) \sqrt{\Rvalue_k - \Rvalue_{k+m}} \leq \sqrt{1-\pmin} \Xi_{k-1}.
\end{equation*}
The statement therefore follows from \eqref{eq:inequality}. \smallskip

\noindent
\textbf{Case 2:} $k \in K_2$. We then have $\varphi(x^*) \leq 
\Rvalue_{k+m} < \varphi(\xk) \leq \Rvalue_{k} < \varphi(x^*) + \eta$
by assumption. Using the KL property of $\Phi$ and the fact that $\Phi$ agrees with $\varphi$ on $\mathcal{M}$, we get
\begin{equation*}
   \chi' \big( \varphi(\xk) - \varphi(x^*) \big) \dist \big( 0, \partial \Phi(\xk) \big) = 
   \chi' \big( \Phi(\xk) - \Phi(x^*) \big) \dist \big( 0, \partial \Phi(\xk) \big) \geq 1.
\end{equation*}
As $\xk$ was assumed to be in $\clball_\vartheta(x^*)$, by application of \eqref{distbound}, we have
\begin{equation}\label{eq:chi-star}
	\chi'(\varphi(\xk) - \varphi(x^*)) \geq \frac{1}{\frac{2 b}{\taug_C} \| \xkp - \xk \|}.
\end{equation}
(recall that $ \xkp \neq \xk $ since the algorithm
is assumed to generate an infinite sequence).
Using the properties of $\chi$, we now obtain
\begin{align*}
   \Delta_{k, k+m} &= \chi \big(\Rvalue_k- \varphi(x^*) \big) -
    \chi \big( \Rvalue_{k+m} - \varphi(x^*) \big) \\
	&\geq \chi \big( \varphi(\xk)- \varphi(x^*) \big) - 
	\chi \big( \Rvalue_{k+m} - \varphi(x^*) \big)\\
	&\geq \chi' \big( \varphi(\xk) - \varphi(x^*) \big) 
	\big( \varphi(\xk) - \Rvalue_{k+m} \big)\\
	&\geq \frac{\varphi(\xk) - \Rvalue_{k+m}}{\frac{2 b}{\taug_C} \| \xkp - \xk \|},
\end{align*}
where the first inequality results from the monotonicity of 
$ \chi $, the next one exploits the concavity of $ \chi $, and 
the final estimate exploits \eqref{eq:chi-star} together with the
fact that $ \varphi(\xk) - \Rvalue_{k+m} > 0 $ in the case under
consideration. Thus, with \eqref{eq:normxi}, we get
\begin{equation*}
   \varphi(\xk) - \Rvalue_{k+m} \leq \frac{2 \hat c}{\pmin} \Xi_k \Delta_{k, k+m},
\end{equation*}
from the definition of $ \hat{c} $.
Similar to the first case, our aim is to bound 
the difference $\Rvalue_k - \Rvalue_{k+m}$. Using the fact that $\varphi(\xk) \leq \Rvalue_{k-1}$ by the acceptance criterion for our 
stepsize computation as well as $p_k \geq \pmin$, we have
\begin{equation*}
	p_k \varphi(\xk) + ( 1- p_k)\Rvalue_{k-1} \leq 
    p_{\min} \varphi(\xk) + ( 1 - p_{\min}) \Rvalue_{k-1} .
\end{equation*}
Together with the definition of $ \Rvalue_k := (1-p_k)\Rvalue_{k-1} + p_k \varphi(\xk) $, this yields
\begin{align*}
   \Rvalue_k - \Rvalue_{k+m} &= p_k \varphi(\xk) + (1-p_k) \Rvalue_{k-1} - \Rvalue_{k+m} \\
	& \leq \pmin \varphi(\xk) + (1-\pmin) \Rvalue´_{k-1} - \pmin \Rvalue_{k+m} - (1-\pmin) \Rvalue_{k+m}\\
	& = \pmin \big( \varphi(\xk) - \Rvalue_{k+m} \big) + (1-\pmin) (\Rvalue_{k-1} - \Rvalue_{k+m})\\
	& \leq 2\hat c \Xi_k \Delta_{k, k+m}  + (1-\pmin) (\Rvalue_{k-1} - \Rvalue_{k+m})\\
	& = 2\hat c \Xi_k \Delta_{k, k+m}  + (1-\pmin) (\Rvalue_{k-1} - \Rvalue_k + \Rvalue_k - \Rvalue_{k+m}).
\end{align*}
Taking square roots on both sides and using
$\sqrt{x+y} \leq \sqrt{x} + \sqrt{y}$ for all $ x, y \in \mathbb{R}_{\geq 0}$, we obtain
\begin{equation*}
	\sqrt{\Rvalue_k - \Rvalue_{k+m}}
	\leq \sqrt{2 \hat{c} \Xi_k \Delta_{k,k+m}} +
	\sqrt{1 - \pmin} \big( \sqrt{\Rvalue_{k-1} - \Rvalue_k} 
	+ \sqrt{\Rvalue_k - \Rvalue_{k+m}} \big),
\end{equation*}
so we have
\begin{equation*}
   \big( 1 - \sqrt{1 - \pmin} \big) \sqrt{\Rvalue_k - \Rvalue_{k+m}}
   \leq \sqrt{2 \hat{c} \Xi_k \Delta_{k,k+m}} +
   \sqrt{1 - \pmin} \Xi_{k-1}.
\end{equation*}
Exploiting the inequality $2\sqrt{xy} \leq x+y$ for all $x, y \in \mathbb{R}_{\geq 0}$, this yields
\begin{equation*}
   \big( 1-\sqrt{1-\pmin} \big) 
   \sqrt{\Rvalue_k - \Rvalue_{k+m}} \leq 
	\frac{1}{2} \Xi_k + \sqrt{1-\pmin} \Xi_{k-1} + \hat c \Delta_{k, k+m}.
\end{equation*}
In view of \eqref{eq:inequality}, this completes the proof.
\end{proof}

\begin{theorem}\label{thm:convergenceproof}
Let Assumptions~\ref{descentas} and \ref{genas} hold, let $\{\xk\}_K$ be a subsequence converging to some accumulation point $x^*$, and suppose that $\Phi:= \varphi + \delta_\mathcal{M}$ satisfies the KL property at $x^*$. Then the entire sequence $\{\xk\}$ converges to $x^*$.
\end{theorem}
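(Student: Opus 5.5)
The plan is the joint induction sketched after Theorem~\ref{thm:convergence}. Fix $k_0$ as in Lemma~\ref{Lem:alpha}, enlarging it within $K$ if necessary so that Lemma~\ref{lemma:bs} and Lemma~\ref{lemma:lemmaxiproof} apply for all $k\ge k_0$ with $\xk\in\clball_\vartheta(x^*)$. We also ask that $\Rvalue_{k_0}<\varphi(x^*)+\eta$, which is possible since $\Rvalue_k\searrow\varphi(x^*)$ by Theorem~\ref{thm:subsequenceconvergence}. Since $\Rvalue_k$ is nonincreasing, $\Rvalue_k<\varphi(x^*)+\eta$ then holds for all $k\ge k_0$. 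We prove by induction on $k\ge k_0-1$ that (a) $\xk\in\clball_\vartheta(x^*)$, and (b) inequality \eqref{eq:statementb} holds whenever $k\ge l(k_0)$. The base cases are immediate. First, $x^{k_0-1}\in\clball_\vartheta(x^*)$ because $\vartheta\ge\|x^{k_0-1}-x^*\|$. For $k_0\le k\le l(k_0)$, we get $\xk\in\clball_\vartheta(x^*)$ from the triangle inequality and \eqref{eq:normxi}:
\[
\|\xk-x^*\|\le \|x^{k_0-1}-x^*\|+\tfrac1e\sum_{j=k_0-1}^{k-1}\Xi_j\le\vartheta .
\]
This uses the term $\frac1e\sum 3\Xi_{j-1}$ built into $\vartheta$. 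In all of this, (a) guarantees $\xk\in C$, so \eqref{eq:normxi} is applicable.

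\textbf{Step (b).} Assume (a) holds up to index $k$. Apply Lemma~\ref{lemma:lemmaxiproof} to each $j=k_0,\dots,k$ and sum the results. The left-hand sides $\frac{1-\sqrt{1-\pmin}}{\sqrt m}\sum_{i=j}^{l(j)}\Xi_i$ overlap, and each $\Xi_i$ with $l(k_0)\le i\le k$ appears $m$ times. Hence the summed left side dominates $(1-\sqrt{1-\pmin})\sqrt m\sum_{i=l(k_0)}^k\Xi_i$. On the right, the sum $\sum_{j=k_0}^k\Delta_{j,j+m}$ telescopes in blocks of length $m$ to at most $\sum_{j=k_0}^{l(k_0)}\chi(\Rvalue_j-\varphi(x^*))$, using $\chi\ge0$ and the monotonicity of $\Rvalue$. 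This gives \eqref{eq:statementb}.

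\textbf{Step (a) for $k+1$.} By the choice of $m$ we have $(1-\sqrt{1-\pmin})\sqrt m\ge 1+\sqrt{1-\pmin}$, which lets us absorb the right-hand sum $\sum(\frac12\Xi_j+\sqrt{1-\pmin}\Xi_{j-1})$ into the left side. What remains is a bound
\[
\sum_{j=l(k_0)}^k\Xi_j\le \sum_{j=k_0}^{l(k_0)}(3\Xi_{j-1}+\Xi_j)\;-\;(\text{the part of }\textstyle\sum_{j<l(k_0)}\Xi_j\text{ already counted})\;+\;2\hat c\sum_{j=k_0}^{l(k_0)}\chi(\Rvalue_j-\varphi(x^*)).
\]
The exact bookkeeping of the boundary terms between indices $k_0-1$ and $l(k_0)$ is what produces the coefficients $3$ and $1$ in $\vartheta$. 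Adding the initial segment $\sum_{j=k_0-1}^{l(k_0)-1}\Xi_j$ and using \eqref{eq:normxi} yields
\[
\|x^{k+1}-x^*\|\le\|x^{k_0-1}-x^*\|+\tfrac1e\sum_{j=k_0-1}^{k}\Xi_j\le\vartheta ,
\]
which closes the induction.

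\textbf{Conclusion.} The resulting bound is uniform in $k$, so $\sum_j\|x^{j+1}-x^j\|\le\frac1e\sum_j\Xi_j<\infty$. Hence $\{\xk\}$ is Cauchy, and its limit must equal the accumulation point $x^*$.

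The main obstacle is this absorption and index bookkeeping. The sums on the two sides run over shifted index ranges ($\Xi_{j-1}$ versus $\Xi_j$, and $k_0$ versus $l(k_0)$), and the constants must match those in $\vartheta$ exactly. I would handle it by writing every sum over the common range $[k_0-1,k]$ and bounding the leftover finite pieces by $\sum_{j=k_0}^{l(k_0)}(3\Xi_{j-1}+\Xi_j)$.
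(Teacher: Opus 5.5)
Your proposal is correct and follows the paper's proof essentially step by step. The steps match: the same choice of $k_0$ (enlarged within $K$, with $\Rvalue_{k_0}<\varphi(x^*)+\eta$), the same joint induction on statements (a) and (b), the same summation of Lemma~\ref{lemma:lemmaxiproof} using the $m$-fold overlap and the block telescoping of $\Delta_{j,j+m}$, and the same absorption via the defining inequality for $m$. The bookkeeping you left vague is exactly the paper's intermediate bound $\sum_{j=l(k_0)}^k\Xi_j\le\sum_{j=k_0}^{l(k_0)}(\Xi_j+2\Xi_{j-1})+2\hat c\sum_{j=k_0}^{l(k_0)}\chi(\Rvalue_j-\varphi(x^*))$, which follows from $(1-\sqrt{1-\pmin})\sqrt m-(\tfrac12+\sqrt{1-\pmin})\ge\tfrac12$; adding $\sum_{j=k_0}^{l(k_0)}\Xi_{j-1}$ to both sides then produces the coefficient $3$ in $\vartheta$.
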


\begin{proof}
Let $k_0$ be the index from the definition of $\vartheta$, cf.\ Lemma~\ref{Lem:alpha}. Without loss of generality, we may assume that $k_0$ is large enough such that the results from Lemma~\ref{lemma:bs} and Lemma~\ref{lemma:lemmaxi} hold and that $\Rvalue_{k_0} < \varphi(x^*) + \eta$. We now claim that the following statements hold:
\begin{enumerate}[label=(\alph*)]
	\item for all $k \geq k_0-1$: $\xk \in \clball_\vartheta(x^*)$, and
	\item for all $k \geq l(k_0)$:
	\begin{equation}\label{eq:statementb}
	\big( 1-\sqrt{1-\pmin} \big) 
		\sqrt{m} \sum_{j = l(k_0)}^k \Xi_j \leq \sum_{j = k_0}^k \left(\frac{1}{2} \Xi_j + \sqrt{1-\pmin} \Xi_{j-1}\right) + \hat c \sum_{j = k_0}^{l(k_0)} \chi \big( \Rvalue_j - \varphi(x^*) \big),
	\end{equation}
\end{enumerate}
where $ \hat{c} $ denotes the constant from Lemma~\ref{Lem:alpha}.
We verify these two statements simultaneously by induction over $k$. 

For $k = k_0-1$, the term $\| x^{k_0-1} - x^* \|$ is obviously smaller than the constant $\vartheta$ in Lemma~\ref{Lem:alpha}. Hence $x^{k_0-1} \in \clball_\vartheta(x^*)$. Now, assuming $\xj \in \clball_\vartheta(x^*)$ for all $j$ from $k_0-1$ to some $k \in \{k_0-1, \dots, l(k_0)\}$, we can apply \eqref{eq:normxi} to obtain
\begin{align*}
	\|\xk - x^*\| &\leq \| x^{k_0-1} - x^* \| + \sum_{j = k_0}^{k} \| \xj - \xjm \|\\
	& \leq \| x^{k_0-1} - x^* \| + \frac{1}{e} \sum_{j = k_0}^{k} \Xi_{j-1}\\
	& \leq \| x^{k_0-1} - x^* \| + \frac{1}{e} \sum_{j = k_0}^{l(k_0)} \Xi_{j-1} \leq \vartheta.
\end{align*}
This shows the first statement for $k = k_0-1, \dots, l(k_0)$. Now, we can apply Lemma~\ref{lemma:lemmaxi} for indices $k = k_0, \dots, l(k_0)$ and obtain
\begin{align*}
   \big( 1-\sqrt{1-\pmin} \big)
   \sqrt{m} \Xi_{l(k_0)}& \leq \frac{1 - \sqrt{1- \pmin}}{\sqrt{m}} \sum_{j=k_0}^{l(k_0)} \sum_{i = j}^{l(j)} \Xi_i \\
	&\leq \sum_{j=k_0}^{l(k_0)} \left(\frac{1}{2} \Xi_j + \sqrt{1-p_{\min}} \Xi_{j-1}\right) + \hat c \sum_{j = k_0}^{l(k_0)} \Delta_{j, j+m}\\
	&\leq  \sum_{j=k_0}^{l(k_0)} \left(\frac{1}{2} \Xi_j + \sqrt{1-p_{\min}} \Xi_{j-1}\right) + \hat c \sum_{j = k_0}^{l(k_0)} 
	\chi \big( \Rvalue_j - \varphi(x^*) \big),
\end{align*}
where the first inequality follows from the fact that the term
$ \Xi_{l(k_0)} $ occurs $ m $ times within the double sum on the
right-hand side, whereas the other expressions $ \Xi_i $ are
nonnegative, and the second inequality is obtained by summing \eqref{eq:lemmaxi} in Lemma~\ref{lemma:lemmaxi} from $k_0$ to $l(k_0)$. 
In the final estimate, we simply omit some nonpositive terms.
This shows that the second statement holds for $k=l(k_0)$.

Suppose that the first statement holds for all $j$ from $k_0-1$ to some $k \geq l(k_0)$ and that the second statement is true for $k \geq l(k_0)$. We first show that the second statement for $k$ implies the first statement for $k+1$. Using \eqref{eq:statementb}, we get
\begin{align*}
   & \big( 1-\sqrt{1-\pmin} \big) 
   \sqrt{m}\sum_{j = l(k_0)}^k \Xi_j \\
	& \hspace*{8mm} \leq \sum_{j = k_0}^k \left(\frac{1}{2}\Xi_j+\sqrt{1-p_{\min}}   \Xi_{j-1}\right) + \hat c \sum_{j = k_0}^{l(k_0)} \chi \big( \Rvalue_j - \varphi(x^*) \big) \\
	& \hspace*{8mm} \leq \sum_{j = k_0}^{l(k_0)} \left(\frac{1}{2} \Xi_j + \sqrt{1-p_{\min}} \Xi_{j-1}\right) + \left( \frac{1}{2} + \sqrt{1- \pmin}\right) \sum_{j = l(k_0)}^{k} \Xi_j + \hat c \sum_{j = k_0}^{l(k_0)} \chi \big( \Rvalue_j - \varphi(x^*) \big).
\end{align*}
This implies 
\begin{align*}
   & \left( \Big(1-\sqrt{1-\pmin} \Big)\sqrt{m} - \Big(\frac{1}{2} + \sqrt{1-\pmin} \Big) \right) \sum_{j = l(k_0)}^k \Xi_j \\
	& \hspace*{8mm} \leq \sum_{j = k_0}^{l(k_0)} \left(\frac{1}{2} \Xi_j + \sqrt{1-p_{\min}} \Xi_{j-1}\right) + \hat c \sum_{j = k_0}^{l(k_0)}
   \chi \big( \Rvalue_j - \varphi(x^*) \big).
\end{align*}
Noting that, by definition of $m$, it holds that 
$\big( 1-\sqrt{1-\pmin} \big) \sqrt{m} - 
\big( 1/2 + \sqrt{1- \pmin} \big) \geq 1/2$ and that we obviously
have $\sqrt{1-\pmin} \leq 1$, we get
\begin{equation*}
	\sum_{j=l(k_0)}^k \Xi_j \leq \sum_{j=k_0}^{l(k_0)} (\Xi_j + 2 \Xi_{j-1}) + 2 \hat c \sum_{j = k_0}^{l(k_0)} \chi \big( \Rvalue_j - \varphi(x^*) \big).
\end{equation*}
This implies
\begin{equation}\label{eq:sumXis}
	\sum_{j=k_0-1}^k \Xi_j = \sum_{j=k_0}^{l(k_0)} \Xi_{j-1} + \sum_{j=l(k_0)}^k \Xi_j \leq \sum_{j=k_0}^{l(k_0)} (3 \Xi_{j-1} + \Xi_j) + 2 \hat c \sum_{j = k_0}^{l(k_0)}\chi \big( \Rvalue_j - \varphi(x^*) \big).
\end{equation}
As we assumed that the first statement and by consequence $\xj \in \clball_\vartheta(x^*) \cap \mathcal {M} \subset C$ is true for all $j \in \{k_0 -1, \dots, k\}$, we obtain from \eqref{eq:normxi} together with the equation above that
\begin{equation}\label{eq:normxkp}
\begin{aligned}
	\|\xkp - x^*\| & \leq \| x^{k_0-1} - x^* \| + \sum_{j = k_0-1}^k \| \xjp - \xj\| \leq \| x^{k_0-1} - x^* \| + \frac{1}{e} \sum_{j=k_0-1}^k \Xi_j\\
	&\leq \| x^{k_0-1} - x^*\| + \frac{1}{e} \sum_{j=k_0}^{l(k_0)} (3 \Xi_{j-1} + \Xi_j) + \frac{2 \hat c}{e} \sum_{j = k_0}^{l(k_0)}\chi \big( \Rvalue_j - \varphi(x^*) \big).
\end{aligned}
\end{equation}
The expression on the right-hand side is precisely the constant
$ \vartheta $ from Lemma~\ref{Lem:alpha}. Thus, the first statement holds 
for $k+1$.

We next verify the second claim for $k+1$. Since we already know that $\xj \in \clball_\vartheta(x^*)$ is true for all $j \in \{k_0-1, \dots, k+1\}$, we again apply Lemma~\ref{lemma:lemmaxi} and sum over \eqref{eq:lemmaxi}, now from $k_0$ to $k+1$. This yields
\begin{align*}
   \big( 1-\sqrt{1-\pmin} \big) \sqrt{m} 
   \sum_{j=l(k_0)}^{k+1} \Xi_j & 
   \leq \frac{1-\sqrt{1-\pmin}}{\sqrt{m}} \sum_{j=k_0}^{k+1} \sum_{i=j}^{l(j)} \Xi_i\\
	&\leq \sum_{j=k_0}^{k+1} \left(\frac{1}{2} \Xi_j + \sqrt{1-\pmin} \Xi_{j-1}\right) + \hat c \sum_{j =k_0}^{k+1} \Delta_{j, j+m}\\
	&\leq \sum_{j=k_0}^{k+1} \left(\frac{1}{2} \Xi_j + \sqrt{1-\pmin} \Xi_{j-1}\right) + \hat c \sum_{j =k_0}^{l(k_0)} \chi \big( \Rvalue_j - \varphi(x^*) \big),
\end{align*}
where the first inequality results from the fact that each term
$ \Xi_j $ for $ j = l(k_0), \ldots, k+1 $ from the left-hand side
occurs $ m $ times within the double sum from the right-hand side
(observe that the relation $ l(j+1) = l(j)+1 $ holds for all $ j $),
whereas the remaining expressions $ \Xi_i $ are nonnegative,
the next inequality exploits \eqref{eq:lemmaxi} from 
Lemma~\ref{lemma:lemmaxi}, and the final estimate uses a telescoping 
sum argument where we omit some nonpositive summands. This completes our induction step.

Hence, it follows that $\xk \in \clball_\vartheta(x^*)$ for all $k \geq k_0-1$.
Taking $k \to \infty$ in the resulting expression for $\sum_{j = k_0-1}^k
\| \xjp- \xj \|$ from \eqref{eq:normxkp} then shows that 
$\{\xk \}_{k \in \mathbb{N}}$ is a Cauchy sequence and, therefore,
convergent. Thus, the accumulation point $x^*$ is the limit of the 
entire sequence $ \{ \xk \} $.
\end{proof}

For completeness, we provide a proof for the following rate-of-convergence result in Theorem~\ref{thm:convergencerateproof} for the case where the desingularization function is given by $\chi(t) = c t^\theta$ for some $c > 0$ and $\theta \in (0, 1)$. Note, however, that it is also a direct consequence of the corresponding more general result found in \cite{Qian2024}. We need the following technical result from \cite[Lemma 1]{Aragon2018}:

\begin{lemma}\label{Lem:Aragon}
Let $ \{ s_k \} \subseteq [0, \infty )$ be any monotonically decreasing sequence satisfying $ s_k \to 0 $ and 
\begin{equation*}
	s_k^\alpha \leq \beta \big( s_k - s_{k+1} \big) \quad
	\text{for all $ k $ sufficiently large},
\end{equation*}
where $ \alpha, \beta > 0 $ are suitable constants. Then the following
statements hold:
\begin{itemize}
	\item[(a)] If $ \alpha \in (0,1] $, the sequence $ \{ s_k \} $
	converges linearly to zero with rate $ 1 - \frac{1}{\beta} $.
	\item[(b)] If $ \alpha > 1 $, there exists a constant $ \eta > 0 $
	such that 
	\begin{equation*}
		s_k \leq \eta k^{- \frac{1}{\alpha - 1}} \quad
		\text{for all $ k $ sufficiently large}.
	\end{equation*}
\end{itemize}
\end{lemma}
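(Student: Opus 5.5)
The plan is to reduce the claim to a one-step recursion for $\Delta_k := s_k^{1-\alpha}$ (or for $s_k$ itself) and then iterate it. For part (a) the reduction is immediate, and for part (b) it goes through a standard comparison of $s_k^{1-\alpha}$ with an integral of $t^{-\alpha}$.

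\emph{Part (a), $\alpha\in(0,1]$.} Since $s_k\to 0$, we have $s_k\le 1$ for all large $k$. Hence $s_k^\alpha\ge s_k$, and the hypothesis yields $s_k\le \beta(s_k-s_{k+1})$. Necessarily $\beta\ge 1$ whenever some $s_k>0$; if $s_k=0$ from some index on, the claim is trivial. Rearranging gives
\begin{equation*}
s_{k+1}\le \Big(1-\tfrac1\beta\Big)s_k ,
\end{equation*}
which is Q-linear convergence with rate $1-1/\beta$.

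\emph{Part (b), $\alpha>1$.} Assume first that $s_k>0$ for all $k$; otherwise the sequence is eventually zero and there is nothing to prove. Let $h(t)=t^{-\alpha}$, which is decreasing on $(0,\infty)$. Then
\begin{equation*}
\frac{1}{\alpha-1}\bigl(s_{k+1}^{1-\alpha}-s_k^{1-\alpha}\bigr)=\int_{s_{k+1}}^{s_k} h(t)\,dt\ \ge\ h(s_k)(s_k-s_{k+1})\ \ge\ \frac{1}{\beta}\,s_k^{-\alpha}s_k^{\alpha}=\frac1\beta .
\end{equation*}
This uses only the lower bound $h(t)\ge h(s_k)$ on the interval and the hypothesis, so no case split is needed.

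Summing these increments from a fixed index $k_1$ to $k-1$ gives
\begin{equation*}
s_k^{1-\alpha}\ge s_{k_1}^{1-\alpha}+\frac{\alpha-1}{\beta}(k-k_1)\ge \frac{\alpha-1}{2\beta}\,k
\end{equation*}
for all $k\ge 2k_1$. Consequently
\begin{equation*}
s_k\le \Big(\tfrac{2\beta}{\alpha-1}\Big)^{\frac1{\alpha-1}}k^{-\frac1{\alpha-1}} ,
\end{equation*}
so we may take $\eta=(2\beta/(\alpha-1))^{1/(\alpha-1)}$.

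The only real point of care is the lower estimate of the integral. Because $t^{-\alpha}$ is decreasing, it is evaluated at the larger endpoint $s_k$, and that choice matches the left-hand side of the hypothesis exactly. Beyond that, the remaining care is in handling the degenerate cases: sequences that reach zero, and the requirement $\beta\ge1$ in (a).
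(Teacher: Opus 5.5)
Your proof is correct and complete, including the degenerate cases. In (a), the eventual bound $s_k\le 1$ gives $s_k\le s_k^\alpha\le\beta(s_k-s_{k+1})$. In (b), the estimate $\int_{s_{k+1}}^{s_k}t^{-\alpha}\,dt\ge s_k^{-\alpha}(s_k-s_{k+1})\ge\frac{1}{\beta}$ telescopes to a lower bound on $s_k^{1-\alpha}$ that grows linearly in $k$. The paper gives no proof of its own and imports the lemma from \cite[Lemma 1]{Aragon2018}, so there is nothing in the paper to compare your argument against. As you observe, the hypothesis evaluates $s_k^\alpha$ at the larger endpoint, so no Attouch--Bolte-type case distinction is needed.
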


\begin{theorem}\label{thm:convergencerateproof}
Let Assumptions~\ref{descentas} and \ref{genas} hold, and suppose that $\{ \xk \}$ converges on some subsequence $\{ \xk\}_K$ to a limit point $x^*$ such that $\Phi$ has the KL property at $x^*$. Then the entire sequence $\{ \xk \}$ converges to $x^*$. Further, if the corresponding desingularization function is given by $\chi(t) = ct^{\theta}$ (for some $c>0$ and $\theta \in (0,1)$), then the following statements hold:
\begin{enumerate}[label=(\alph*)]
	\item If $\theta \in [1/2, 1)$, then $\{\Rvalue_k\}$ converges R-linearly to $\varphi(x^*)$ and $\{\xk\}$ converges R-linearly to $x^*$. \label{itm1}
	\item If $\theta \in (0, 1/2)$, then there exist constants $\eta_1, \eta_2 > 0$ such that for all $k$ large enough it holds that\label{itm2}
	\begin{align}
		\Rvalue_k - \varphi(x^*) &\leq \eta_1 k^{-\frac{1}{1-2\theta}},\\
		\|\xk - x^*\| &\leq \eta_2 k^{-\frac{\theta}{1-2\theta}}.
	\end{align}
\end{enumerate}
\end{theorem}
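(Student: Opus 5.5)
Global convergence of $\{\xk\}$ to $x^*$ is exactly Theorem~\ref{thm:convergenceproof}, so the plan concerns only the rates. Write $s_k := \Rvalue_k - \varphi(x^*) \geq 0$. By Lemma~\ref{lemma:genproperties} and Theorem~\ref{thm:subsequenceconvergence}, $s_k$ decreases monotonically to $0$. The aim is a recursion $s_k^\alpha \leq \beta (s_{k-1} - s_{k+m})$ with $\alpha = 2(1-\theta)$, to which a (blocked) version of Lemma~\ref{Lem:Aragon} applies. The rate for $\{\xk\}$ then follows from the tail bound $\|\xk - x^*\| \leq \frac{1}{e}\sum_{j\geq k}\Xi_j$, which comes from \eqref{eq:normxi} because all iterates eventually lie in $C$. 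For this tail sum I repeat the summation argument of Theorem~\ref{thm:convergenceproof}, now started at a general large $k$ instead of $k_0$. This gives
\[
\sum_{j\geq k}\Xi_j \lesssim \sum_{j=k}^{l(k)} (\Xi_{j-1} + \Xi_j) + \sum_{j=k}^{l(k)} \chi(s_j) \lesssim \sqrt{s_{k-1}} + \chi(s_k) = \sqrt{s_{k-1}} + c\, s_k^\theta ,
\]
using $\Xi_j \leq \sqrt{s_j}$ and the monotonicity of $s_j$.

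\textbf{Key recursion.} For large $k$ I use the case split from Lemma~\ref{lemma:lemmaxiproof}.

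\emph{Case $k\in K_1$.} The computation there gives $s_k - s_{k+m} \leq (1-\pmin)(s_{k-1}-s_{k+m})$. Also $s_k \leq s_{k-1}$.

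\emph{Case $k \in K_2$.} Here $\varphi(\xk) - \varphi(x^*) > s_{k+m} \geq 0$. The KL inequality with $\chi'(t) = c\theta t^{\theta-1}$ combined with Lemma~\ref{lemma:bs} and \eqref{eq:normxi} gives
\[
(\varphi(\xk)-\varphi(x^*))^{1-\theta} \leq \mathrm{const}\cdot \|\xkp-\xk\| \leq \mathrm{const}\cdot \Xi_k .
\]
Since $\Rvalue_k$ is a convex combination, $s_k \leq p_k(\varphi(\xk)-\varphi(x^*)) + (1-p_k) s_{k-1}$, and both terms must be controlled. My plan is to show that in both cases
\[
s_{k+m}^{2(1-\theta)} \leq \beta\,(s_{k-1} - s_{k+m}) \quad\text{or}\quad s_{k+m} \leq q\, s_{k-1} \text{ for some fixed } q<1 .
\]
In case 1, $s_k - s_{k+m} \leq (1-\pmin)(s_{k-1} - s_{k+m})$ rearranges to $s_{k+m} \leq \dots$. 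In case 2, $s_{k+m} < \varphi(\xk) - \varphi(x^*) \leq \mathrm{const}\cdot \Xi_k^{1/(1-\theta)}$ and $\Xi_k^2 \leq s_{k-1} - s_{k+m}$, which gives the first alternative directly.

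\textbf{Main obstacle.} The hard step is turning these inequalities on the blocked subsequence $t_i := s_{i(m+1)}$ into the form required by Lemma~\ref{Lem:Aragon}. In particular, case 1 must be converted into a recursion of the same type. For $\theta \geq 1/2$ this works because $\alpha = 2(1-\theta) \leq 1$, so a linear contraction also satisfies $t_{i}^\alpha \geq t_i \geq \dots$ near $0$. For $\theta<1/2$, a linear decrease is at least as fast as the polynomial rate. I will try merging the two alternatives as $t_{i+1}^{\alpha} \leq \beta'(t_i - t_{i+1})$ with $\beta'$ enlarged to absorb the contraction case, using $t_i\to0$.

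\textbf{Final rates.} Lemma~\ref{Lem:Aragon} then gives R-linear convergence of $s_k$ for $\theta\in[1/2,1)$, and $s_k \leq \eta_1 k^{-1/(1-2\theta)}$ for $\theta<1/2$, since $1/(\alpha-1) = 1/(1-2\theta)$. Monotonicity of $s_k$ transfers these bounds from the blocked subsequence to all $k$. Finally, the tail bound gives $\|\xk-x^*\| \lesssim \sqrt{s_{k-1}} + s_k^\theta$. This is R-linear in case (a). In case (b) the dominant term is $s_k^\theta \sim k^{-\theta/(1-2\theta)}$, because $\theta<1/2$ makes $s^{1/2} \leq s^\theta$ eventually.
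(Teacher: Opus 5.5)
The case $k\in K_1$ of your key recursion does not go through, and this is a genuine gap. The inequality $s_k - s_{k+m} \leq (1-\pmin)(s_{k-1}-s_{k+m})$ rearranges to $\pmin s_{k+m} \geq s_k - (1-\pmin)s_{k-1}$. That is a \emph{lower} bound on $s_{k+m}$, so it cannot give $s_{k+m}\leq q s_{k-1}$.

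More fundamentally, this inequality only compares the block decrease $s_k-s_{k+m}$ with the one-step decrease $s_{k-1}-s_k$. It says nothing about how large $s$ is relative to these decreases. For $\pmin<1$, the values $s_{k-1}=\lambda$, $s_k=\lambda-\epsilon$, $s_{k+m}=\lambda-\epsilon-\epsilon'$ satisfy it whenever $\pmin\epsilon'\leq(1-\pmin)\epsilon$. For $\epsilon,\epsilon'$ small relative to $\lambda$, neither $s_{k+m}\leq q s_{k-1}$ nor $s_{k+m}^{2(1-\theta)}\leq\beta(s_{k-1}-s_{k+m})$ holds. The inequality sufficed in Lemma~\ref{lemma:lemmaxiproof} only because there the goal is summability of the $\Xi_j$, not a rate. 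Since you invoke the KL inequality only on $K_2$, nothing controls $s_k$ along $K_1$, and no rate follows from the plan as written.

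The missing idea is to apply the KL inequality at every large $k$, regardless of $K_1$ or $K_2$; it holds trivially when $\varphi(\xk)\leq\varphi(x^*)$. You also need to make your convex-combination remark uniform. Since $\varphi(\xk)\leq\Rvalue_{k-1}$ by the acceptance test and $p_k\geq\pmin$, one has $s_k\leq(1-\pmin)s_{k-1}+\pmin(\varphi(\xk)-\varphi(x^*))$. KL together with Lemma~\ref{lemma:bs} and \eqref{eq:normxi} gives $\varphi(\xk)-\varphi(x^*)\leq\omega^{-1/\alpha}(s_k-s_{k+1})^{1/\alpha}$ for some constant $\omega>0$, with $\alpha=2(1-\theta)$. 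With monotonicity of $\{s_k\}$ this yields $s_{k-1}\leq\frac{1}{\pmin}(s_{k-1}-s_{k+1})+\omega^{-1/\alpha}(s_{k-1}-s_{k+1})^{1/\alpha}$. Hence $s_{k-1}^{\max\{1,\alpha\}}\leq\gamma(s_{k-1}-s_{k+1})$ for all large $k$, and Lemma~\ref{Lem:Aragon} applies on even and odd indices separately. This is the paper's route, and no blocking of length $m+1$ is needed.

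Your merging step for $\theta\geq 1/2$ also goes the wrong way. For $\alpha<1$, the geometric sequence $t_i=q^i$ violates $t_{i+1}^{\alpha}\leq\beta'(t_i-t_{i+1})$. You must instead turn the KL alternative into a contraction via $t_{i+1}\leq t_{i+1}^{\alpha}\leq\beta(t_i-t_{i+1})$.

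Your tail bound for $\|\xk-x^*\|$ and the transfer of the rates from $\{s_k\}$ to $\{\xk\}$ agree with the paper.
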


\begin{proof}
Taking Theorem~\ref{thm:convergence} into account, we only need to verify statements \ref{itm1} and \ref{itm2}. As a first step, let us prove the results for the sequence $\{\Rvalue_k\}$. We first claim that for $\theta \in (0,1)$ and with
\begin{equation*}
	\omega := \left(\frac{e \taug_C}{2 c \theta b}\right)^2,
\end{equation*}
it holds that
\begin{equation}\label{eq:sigma-inequality}
	\varphi(\xk) - \varphi(x^*) \leq \Big(\frac{1}{\omega}\Big)^{\frac{1}{2(1-\theta)}} \big( \Rvalue_k - \Rvalue_{k+1} \big)^{\frac{1}{2(1-\theta)}}
\end{equation}
for all $k \in \mathbb{N}$ sufficiently large. In fact, if $\varphi(\xk) \leq \varphi(x^*)$ holds, then the left-hand side of \eqref{eq:sigma-inequality}
is nonpositive, hence, the claim holds trivially.
By previous results, we may assume that $k$ is large enough such 
that $\xk \in \clball_\vartheta (x^*)$ and $\varphi(x^*) < \varphi(\xk) < \varphi(x^*) + \eta$ hold.

As $\Phi$ satisfies the KL property at $x^*$ with $\chi(t) = ct^{\theta}$ and as $\Phi(x) = \varphi(x)$ for all $x \in \mathcal{M}$, we have
\begin{align*}
	1 & \leq \chi' \big( \varphi(\xk) - \varphi(x^*) \big)
	\dist \big( 0, \partial \Phi(\xk) \big)\\
	& = c\theta \big( \varphi(\xk) - \varphi(x^*) \big)^{\theta -1}
    \dist \big( 0, \partial \Phi (\xk) \big).
\end{align*}
By \eqref{distbound}, this yields
\begin{equation*}
	1 \leq c\theta \frac{2 b}{\taug_C}  \big( \varphi(\xk) - \varphi(x^*) \big)^{\theta -1} \| \xkp - x^k \|,
\end{equation*}
which gives the inequality
\begin{equation}\label{eq:proofconvrate1}
	\|\xkp - x^k \| \geq \frac{1}{c \theta \frac{2 b}{\taug_C}}
	\big( \varphi(\xk) - \varphi(x^*) \big)^{1-\theta}.
\end{equation}
Further, from \eqref{eq:normxi}, we have
\begin{equation}\label{eq:proofconvrate2}
	\Rvalue_{k+1} - \Rvalue_k \leq - e^2 \|\xkp - \xk \|^2 .
\end{equation}
Combination of \eqref{eq:proofconvrate1} and 
\eqref{eq:proofconvrate2} yields
\begin{align*}
	\Rvalue_{k+1} - \Rvalue_k &\leq - e^2 \|\xkp - x^k \|^2\\
	 & \leq - e^2 \frac{1}{c^2 \theta^2 \frac{4 b^2}{\taug_C^2}} 
	\big( \varphi(\xk) - \varphi(x^*) \big)^{2(1-\theta)}\\
	& = -\omega \big( \varphi(\xk) - \varphi(x^*) \big)^{2(1-\theta)} .
\end{align*}
Rearranging these terms shows that the claim \eqref{eq:sigma-inequality}
holds.

Next recall that, by the acceptance criterion for the stepsize
$ \tauk $, we always have $ \varphi(\xkp) \leq \Rvalue_k $.
Hence, it follows that
\begin{equation}\label{eq:Rvalue-inequality}
	\Rvalue_{k+1} = (1-p_{k+1}) \Rvalue_k + p_{k+1} \varphi(\xkp) \leq (1-\pmin) \Rvalue_k + \pmin \varphi(\xkp).
\end{equation}
Denote by $\{s_k\}$ the sequence defined by $s_k := \Rvalue_k - \varphi(x^*)
\geq 0 $. Then $ s_k \to 0 $ monotonically, and we obtain
\begin{align*}
	s_{k+1} &\leq (1-\pmin) s_k + \pmin(\varphi(\xkp) - \varphi(x^*))\\
			&\leq (1-\pmin)s_k + \pmin \Big(\frac{1}{\omega}\Big)^{\frac{1}{2(1-\theta)}} \big( s_{k+1} - s_{k+2} \big)^{\frac{1}{2(1-\theta)}} ,
\end{align*}
where the first inequality follows from \eqref{eq:Rvalue-inequality}
and the second one results from \eqref{eq:sigma-inequality}.
	By the monotonicity of $\{s_k\}$, this implies
\begin{equation*}
	s_{k+2} \leq (1- \pmin) s_k + \pmin \Big(\frac{1}{\omega}\Big)^{\frac{1}{2(1-\theta)}} \big( s_{k} - s_{k+2} \big)^{\frac{1}{2(1-\theta)}},
\end{equation*}
which, in turn, yields
\begin{align*}
s_k &\leq \frac{1}{\pmin} (s_k -s_{k+2}) + \Big(\frac{1}{\omega}\Big)^{\frac{1}{2(1-\theta)}} \big( s_k - s_{k+2} \big)^{\frac{1}{2(1-\theta)}}\\
	&\leq \left(\frac{1}{\pmin} + \left(\frac{1}{\omega}\right)^{\frac{1}{2(1-\theta)}}\right) (s_k - s_{k+2})^{\min\{1, \frac{1}{2(1-\theta)}\}}
\end{align*}
for all $ k $ sufficiently large.
As for all $a, b > 0$ it holds that $1/\min\{a, b\} = \max\{1/a, 1/b\}$, it follows that
\begin{equation*}
s_k^{\max\{1, 2(1-\theta)\}} \leq \gamma (s_k - s_{k+2}),
\end{equation*}
where 
\begin{equation*}
\gamma := \left(\frac{1}{\pmin} + \left(\frac{1}{\omega}\right)^{\frac{1}{2(1-\theta)}}\right)^{\max\{1, 2(1-\theta)\}} > 0
\end{equation*}
is a constant.

By considering even and odd indices separately, we are now in the setting of Lemma~\ref{Lem:Aragon} and immediately obtain the 
corresponding rate-of-convergence results for the sequence $\{\Rvalue_k\}$ as $\theta \in (0, 1/2)$ implies that $2(1-\theta) > 1$ and $\theta \in [1/2, 1)$ implies that $2(1-\theta) \in (0, 1]$.

Let us now verify the statements for the sequence $\{\xk\}$. In view of Theorem~\ref{thm:convergence}, the equations in the proof of that result remain valid if $k_0-1$ is replaced by some $k$ sufficiently large. Note that $\Xi_{j-1} \leq \sqrt{\Rvalue_{j-1} - \varphi(x^*)} = \sqrt{s_{j-1}}$. Taking the monotonicity of the function $\chi$ and the sequences $\{\Rvalue_k\}$ and thus $\{s_k\}$ into account, it follows from \eqref{eq:sumXis} in combination with \eqref{eq:normxi} and the definition of $\chi$ that, for $l>k$:
\begin{align*}
	\|\xk - x^l\| &\leq \sum_{j=k}^{l-1} \|x^{j+1}-x^j\| \leq \frac{1}{e} \sum_{j=k}^{l-1} \Xi_j  \\
	&\leq \frac{1}{e} \sum_{j=k+1}^{l(k+1)} (3\Xi_{j-1} + \Xi_j) + \frac{2 \hat c}{e} \sum_{j = k+1}^{l(k+1)}\chi \big( \Rvalue_j - \varphi(x^*) \big)\\
	&\leq \frac{4}{e} \sum_{j=k+1}^{l(k+1)} \sqrt{s_{j-1}} + \frac{2 \hat c}{e} \sum_{j = k+1}^{l(k+1)}\chi ( s_j )\\
	&\leq \frac{4m}{e} \sqrt{s_k} + \frac{2 \hat c m}{e}\chi ( s_k )\\
	&\leq \tilde \eta s_k^{\min\{1/2, \theta \}}
\end{align*}
for all $ k $ sufficiently large, where 
\begin{equation*}
\tilde \eta := \frac{4 + 2 \hat c c}{e}m.
\end{equation*}
Taking now $l \to \infty$, together with the corresponding rate-of-convergence results for $\{s_k\}$ from the first part, this completes the proof.
\end{proof}

\end{document}